\let\origsection=\section \def\section{\@ifstar{\origsection*}{\mysection}} 
\def\mysection{\@startsection{section}{1}\z@{.7\linespacing\@plus\linespacing}{.5\linespacing}{\normalfont\scshape\centering\S}}
\renewcommand{\PrintDOI}[1]{\doi{#1}}
\numberwithin{equation}{section}
\numberwithin{figure}{section}
\def\rmlabel{\upshape({\itshape \roman*\,})}
\def\alabel{\upshape({\itshape \alph*\,})}
\def\nlabel{\upshape({\itshape \arabic*\,})}
\let\polishlcross=\l
\def\l{\ifmmode\ell\else\polishlcross\fi}
\let\emptyset=\varnothing
\let\setminus=\smallsetminus
\def\moverlay{\mathpalette\mov@rlay}
\def\mov@rlay#1#2{\leavevmode\vtop{   \baselineskip\z@skip \lineskiplimit-\maxdimen
   \ialign{\hfil$\m@th#1##$\hfil\cr#2\crcr}}}
\newcommand{\charfusion}[3][\mathord]{
    #1{\ifx#1\mathop\vphantom{#2}\fi
        \mathpalette\mov@rlay{#2\cr#3}
      }
    \ifx#1\mathop\expandafter\displaylimits\fi}
\newcommand{\dcup}{\charfusion[\mathbin]{\cup}{\cdot}}
\newcommand{\bigdcup}{\charfusion[\mathop]{\bigcup}{\cdot}}
\DeclareFontFamily{U}  {MnSymbolC}{}
\DeclareSymbolFont{MnSyC}         {U}  {MnSymbolC}{m}{n}
\DeclareFontShape{U}{MnSymbolC}{m}{n}{
    <-6>  MnSymbolC5
   <6-7>  MnSymbolC6
   <7-8>  MnSymbolC7
   <8-9>  MnSymbolC8
   <9-10> MnSymbolC9
  <10-12> MnSymbolC10
  <12->   MnSymbolC12}{}
\DeclareMathSymbol{\powerset}{\mathord}{MnSyC}{180}
\DeclareMathSymbol{\leftY}{\mathord}{MnSyC}{42}
\DeclareSymbolFont{symbolsC}{U}{txsyc}{m}{n}
\DeclareMathSymbol{\strictif}{\mathrel}{symbolsC}{74}
\let\epsilon=\varepsilon
\let\eps=\epsilon
\let\rho=\varrho
\let\theta=\vartheta
\let\kappa=\varkappa
\def\NN{{\mathds N}}
\def\ZZ{{\mathds Z}}
\def\RR{{\mathds R}}
\def\QQ{{\mathds Q}}
\theoremstyle{plain}
\newtheorem{thm}{Theorem}[section]
\newtheorem{theorem}[thm]{Theorem}
\newtheorem{problem}[thm]{Problem}
\newtheorem{fact}[thm]{Fact}
\newtheorem{prop}[thm]{Proposition}
\newtheorem{corollary}[thm]{Corollary}
\newtheorem{lemma}[thm]{Lemma}
\theoremstyle{definition}
\newtheorem{dfn}[thm]{Definition}
\newtheorem{conjecture}[thm]{Conjecture}
\let\phi=\varphi
\DeclareMathOperator*{\diam}{diam}
\newcommand{\olC}{C}
\newcommand{\fh}{\hat{f}}
\newcommand{\n}[1]{\|#1\|}
\newcommand{\Lx}{L_{\fx}}
\newcommand{\Q}{Q_{\varepsilon}(\fx)}
\newcommand{\Sx}{S(\fx)}
\newcommand{\Sy}{S(\fy)}
\newcommand{\Sa}{S(\fa)}
\newcommand{\cH}{\mathcal H}
\newcommand{\cU}{\mathcal U}
\newcommand{\cV}{\mathcal V}
\newcommand{\fx}{\mathbf x}
\newcommand{\fs}{\mathbf s}
\newcommand{\fd}{\mathbf d}
\newcommand{\fy}{\mathbf y}
\newcommand{\fa}{\mathbf a}
\newcommand{\fb}{\mathbf b}
\let\eps=\varepsilon
\def\dist{\mathrm{dist}}
\begin{document}

\author[K.~Engel]{Konrad Engel} 
\address{Universit\"at Rostock,  Institut f\"ur Mathematik, 18051 Rostock, Germany}
\email{konrad.engel@uni-rostock.de} 

\author[Th. Mitsis]{Themis Mitsis} 
\address{Department of Mathematics and Applied Mathematics, University of Crete, 
70013 Heraklion, Greece}
\email{themis.mitsis@gmail.com} 

\author[Chr. Pelekis]{Christos Pelekis} 
\address{Institute of Mathematics, Czech Academy of Sciences, \v{Z}itna 25, 
Praha 1, Czech Republic}
\thanks{Research was supported by the Czech Science Foundation, grant number 
GJ16-07822Y, by GA\v{C}R project 18-01472Y and RVO: 67985840}
\email{pelekis.chr@gmail.com} 

\author[Chr. Reiher]{Christian Reiher} 
\address{Fachbereich Mathematik, Universit\"at Hamburg, Hamburg, Germany}
\email{christian.reiher@uni-hamburg.de} 

\title[Projection inequalities for antichains]
{Projection inequalities for antichains}

\subjclass[2010]{05D05, 28A75, 49Q15}
\keywords{antichain; weak antichain; Sperner's theorem; Hausdorff dimension; 
Hausdorff measure; projection}

\begin{abstract} 
Let $n$ be an integer with $n \ge 2$.
A set $A \subseteq \RR^n$ is called an \emph{antichain} (resp. \emph{weak antichain}) 
if it does not contain two distinct elements $\fx=(x_1,\ldots, x_n)$ and 
$\fy=(y_1,\ldots, y_n)$ satisfying $x_i\le y_i$ (resp. $x_i < y_i$) for all 
$i\in \{1,\ldots,n\}$.
We show that the Hausdorff dimension of a weak antichain $A$ in the $n$-dimensional
unit cube $[0,1]^n$ is at most $n-1$ and that the $(n-1)$-dimensional Hausdorff measure 
of $A$ is at most $n$, which are the best possible bounds. 
This result is derived as a corollary of the following {\it projection inequality},
which may be of independent interest: The $(n-1)$-dimensional Hausdorff measure of 
a (weak) antichain $A\subseteq [0, 1]^n$ cannot exceed the sum of the 
$(n-1)$-dimensional Hausdorff measures of the $n$ orthogonal projections of $A$ onto the 
facets of the unit $n$-cube containing the origin.
For the proof of this result we establish a discrete variant of the projection 
inequality applicable to weak antichains in $\ZZ^n$ and combine it with ideas 
from geometric measure theory.
\end{abstract}
\maketitle

\section{Introduction}
\label{sec:intro}

Sperner's theorem~\cite{sperner}, a cornerstone of extremal set theory, determines 
for each positive integer $n$ the maximal size of an antichain in the power set
of an $n$-element set and describes the extremal configurations. In the statement that 
follows, $[n]$ denotes the set $\{1, \ldots, n\}$ of the first $n$ natural numbers 
and $A\subseteq \powerset([n])$ is said to be an {\it antichain} if $x\not\subseteq y$
holds for any distinct $x, y\in A$. 

\begin{theorem}[Sperner] 
\label{thm:sperner}
	If $n\ge 1$ is an integer and $A\subseteq \powerset([n])$ is an antichain, 
	then $|A|\leq\binom{n}{\lfloor n/2 \rfloor}$. Equality holds if and only if 
	for some $\ell\in\bigl\{\lfloor n/2 \rfloor, \lceil n/2 \rceil\bigr\}$ the set $A$
	is the collection of all $\ell$-element subsets of $[n]$.
\end{theorem}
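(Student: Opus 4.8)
My plan is to derive the size bound from the Lubell--Yamamoto--Meshalkin (LYM) inequality and then to identify the extremal antichains by a short case analysis on the parity of $n$.

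\emph{The bound.} I would count \emph{maximal chains}, that is, chains $\emptyset=C_0\subsetneq C_1\subsetneq\dots\subsetneq C_n=[n]$ with $|C_i|=i$. These correspond bijectively to orderings $(a_1,\dots,a_n)$ of $[n]$ via $C_i=\{a_1,\dots,a_i\}$, so there are $n!$ of them, and a fixed $k$-element set lies on exactly $k!\,(n-k)!$ of them. Since $A$ is an antichain, no maximal chain contains two members of $A$, so counting incidences gives $\sum_{x\in A}|x|!\,(n-|x|)!\le n!$, i.e.\
\[
  \sum_{x\in A}\binom{n}{|x|}^{-1}\le 1 .
\]
As $\binom{n}{k}$ attains its maximum at $k=\lfloor n/2\rfloor$, replacing each summand by $\binom{n}{\lfloor n/2\rfloor}^{-1}$ yields $|A|\le\binom{n}{\lfloor n/2\rfloor}$.

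\emph{The easy direction and a reduction.} If $A=\binom{[n]}{\ell}$ with $\ell\in\{\lfloor n/2\rfloor,\lceil n/2\rceil\}$, then $A$ is an antichain of size $\binom{n}{\lfloor n/2\rfloor}$, so one implication is immediate. For the converse, I would first note that equality $|A|=\binom{n}{\lfloor n/2\rfloor}$ forces equality in the LYM estimate term by term, hence $\binom{n}{|x|}=\binom{n}{\lfloor n/2\rfloor}$, i.e.\ $|x|\in\{\lfloor n/2\rfloor,\lceil n/2\rceil\}$, for every $x\in A$. When $n$ is even this already gives $A=\binom{[n]}{n/2}$ by cardinality, so only the odd case remains.

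\emph{The odd case.} Write $n=2k+1$ and split $A=A_k\cup A_{k+1}$ into its two central layers, so $|A_k|+|A_{k+1}|=\binom{n}{k}=\binom{n}{k+1}$. Let $\nabla A_k\subseteq\binom{[n]}{k+1}$ be the family of $(k+1)$-sets that contain a member of $A_k$; since $A$ is an antichain, $\nabla A_k$ is disjoint from $A_{k+1}$. The bipartite inclusion graph $G$ between $\binom{[n]}{k}$ and $\binom{[n]}{k+1}$ is regular of degree $k+1$ on each side (as $n-k=k+1$), so double counting the edges leaving $A_k$ gives $|\nabla A_k|\ge|A_k|$; combined with $|A_{k+1}|\le\binom{n}{k+1}-|\nabla A_k|$ and the cardinality identity, this forces $|\nabla A_k|=|A_k|$. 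Since $G$ is connected and biregular, the up-shadow of a proper nonempty subfamily of $\binom{[n]}{k}$ is strictly larger than the subfamily, so either $A_k=\emptyset$, whence $|A_{k+1}|=\binom{n}{k+1}$ and $A=\binom{[n]}{k+1}$, or $A_k=\binom{[n]}{k}$, whence $\nabla A_k=\binom{[n]}{k+1}$ forces $A_{k+1}=\emptyset$ and $A=\binom{[n]}{k}$. I expect this paragraph to be the main obstacle: excluding antichains that genuinely split across both middle layers is precisely where one needs the normalized-matching inequality $|\nabla A_k|\ge|A_k|$ together with the connectedness of the middle-layers graph. (An alternative route for the whole statement would be to produce a partition of $\powerset([n])$ into $\binom{n}{\lfloor n/2\rfloor}$ symmetric chains and run the same ``meets each chain once'' analysis.)
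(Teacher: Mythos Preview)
The paper does not prove Theorem~\ref{thm:sperner}; it is quoted as a classical result with a reference to Sperner's original article~\cite{sperner} and serves only as motivation for the continuous problems treated later. There is therefore no ``paper's own proof'' to compare against.

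That said, your argument is correct and is one of the standard routes to the full statement. The LYM inequality via maximal-chain double counting gives the bound, and your analysis of equality is sound: equality in LYM forces every member of $A$ into the middle layer(s), the even case is immediate, and in the odd case $n=2k+1$ your use of the biregularity of the inclusion graph between $\binom{[n]}{k}$ and $\binom{[n]}{k+1}$ together with its connectedness is exactly what is needed. The step ``$|\nabla A_k|=|A_k|$ and connectedness force $A_k\in\{\varnothing,\binom{[n]}{k}\}$'' deserves one more sentence of justification: from biregularity, $|\nabla A_k|=|A_k|$ implies that every edge incident to $\nabla A_k$ already goes back into $A_k$, so $A_k\cup\nabla A_k$ is a union of connected components; connectedness then finishes it. The symmetric-chain alternative you mention would also work and is the other textbook proof.
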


Sperner's fundamental result has been generalised in various ways and gave rise to 
a substantial body of future developments both within extremal set theory and 
beyond (see~\cites{Anderson, Engeltwo}).  

Observe that via characteristic vectors the power set $\powerset([n])$ can be 
identified with the set $\{0, 1\}^n$ of $n$-dimensional $0$-$1$-vectors. Moreover, 
for any two subsets $x$ and $y$ of~$[n]$ we have $x\subseteq y$ if and only if the 
characteristic vector of $x$ is coordinate-wise at most the characteristic vector of $y$. 
Therefore, Sperner's theorem can be reformulated as a statement about $\{0, 1\}^n$ 
equipped with the product partial ordering. It seems natural to ask what happens when 
one replaces $\{0, 1\}^n$ by the $n$-dimensional unit cube $[0, 1]^n$.       
   
Let us fix the following notation for discussing such situations. 
Given two $n$-tuples $\fx=(x_1,\dots,x_n)$ and $\fy=(y_1,\dots,y_n)$ in $\RR^n$, 
we write $\fx \le \fy$ if $x_i \le y_i$ for all $i \in [n]$. 
Moreover, if $\fx \le \fy$ and $\fx \ne \fy$ we write $\fx < \fy$,
while $\fx \leftY \fy$ has the stronger meaning that $x_i < y_i$ holds for all $i \in [n]$.
A set $A \subseteq \RR^n$ is called an \emph{antichain} (resp. \emph{weak antichain})
if it does not contain two elements $\fx$ and $\fy$ satisfying $\fx < \fy$ 
(resp. $\fx \leftY \fy$). So every antichain is also a weak antichain. 

In order to get some deeper insight first we replace the unit cube $[0, 1]^n$ by its discretization
$D_m^n=\bigl\{\frac{0}{m},\frac{1}{m},\dots,\frac{m-1}{m}\bigr\}^n$, where $m$ is a 
fixed positive integer. De Bruijn et al.~\cite{Bruijn_et_al} proved that the sets 
$A_{n,\ell}=\bigl\{\fx \in D_m^n\colon \sum_{i=1}^n x_i=\ell\bigr\}$, where 
$\ell \in \bigl\{\lfloor n(m-1)/2 \rfloor, \lceil n(m-1)/2 \rceil\bigr\}$, are maximum 
antichains.
Using chains of the form 
\[
	\cdots < (x_1,x_2,\dots,x_n) 
	< \bigl(x_1+\tfrac{1}{m},x_2+\tfrac{1}{m},\dots,x_n+\tfrac{1}{m}\bigr) < \cdots
\]
it is easy to show that the set $W_n=\{\fx \in D_m^n\colon x_i=0 \text{ for some } i\}$ is a maximum weak antichain. Note that $|A_{n,\ell}|=O\left(m^n/\sqrt{n}\right)$ as $n \rightarrow \infty$ and that $|W_n|=m^n-(m-1)^n$, whence $|A_{n,\ell}|/|W_n|= O(1/\sqrt{n})$ as $n \rightarrow \infty$.

Now we come back to the unit cube $[0, 1]^n$. Obviously, the set 
\[
	A_n^\star=\Bigl\{\fx \in [0,1]^n\colon \sum_{i=1}^n x_i= n/2\Bigr\}
\]
is an antichain and the set 
\[
	W_n^\star=\bigl\{\fx \in [0,1]^n\colon x_i=0 \text{ for some } i\bigr\}
\]
is a weak antichain in $[0, 1]^n$.
In view of its similarity with the previous extremal configurations 
one might expect them to have an interesting maximality property. 
Questions addressing the extremality of (weak) antichains in $[0, 1]^n$ become meaningful 
as soon as one agrees on an (outer) {\it measure} on~$[0, 1]^n$ that allows us to compare 
any two different candidates. The first measure on~$[0, 1]^n$ that usually comes to mind 
is the $n$-dimensional Lebesgue measure. However, the antichain $A_n^\star$ (and also the weak antichain $W_n^\star$) is null with respect 
to this measure and the following result due to the first author~\cite{engel} shows that, 
actually, all other antichains in $[0, 1]^n$ are null in this sense as well. 

\begin{theorem}
\label{thm:engel} 
	If $c>0$ and $A$ is a Lebesgue measurable subset of $[0,1]^n$ that does not contain 
	two elements $\fx\le\fy$ with $\sum_{i=1}^n(y_i-x_i) \ge c$, then the Lebesgue measure 
	of $A$ cannot exceed the Lebesgue measure of the optimal set
	\[ 
		A(c) =\Bigl\{\fx \in [0,1]^n\colon 
			\frac{n-c}{2}\leq \sum_{i=1}^n x_i < \frac{n+c}{2}\Bigr\}\,. 
	\]
\end{theorem}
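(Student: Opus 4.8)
The plan is to slice the cube by the level sets of the linear functional $g(\fx)=\sum_{i=1}^{n}x_i$ and to reduce the statement to a combinatorial inequality about how a $c$-antichain can meet those slices.

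\emph{Reductions and the optimal slab.} If $c\ge n$ then $A(c)=[0,1]^n$ and the claim is trivial, so assume $c<n$; since Lebesgue measure is inner regular we may also assume $A$ is compact. For $s\in[0,n]$ put $L_s=\{\fx\in[0,1]^n:g(\fx)=s\}$ and $\rho(s)=\cH^{n-1}(L_s)/\sqrt n$. By the coarea formula $\lambda_n(E)=\int_0^n\cH^{n-1}(E\cap L_s)\,ds/\sqrt n$ for every measurable $E\subseteq[0,1]^n$; in particular $\rho$ is exactly the probability density of $U_1+\dots+U_n$ for independent random variables $U_i$ uniform on $[0,1]$. Hence $\rho$ is supported on $[0,n]$, symmetric about $n/2$ and log-concave, so it is unimodal and $s\mapsto\int_s^{s+c}\rho$ is maximised at $s=(n-c)/2$. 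This is precisely the identity
\[
	\lambda_n(A(c))=\int_{(n-c)/2}^{(n+c)/2}\rho(s)\,ds=\max_{a}\int_a^{a+c}\rho(s)\,ds.
\]

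\emph{A normalised-matching inequality for the slices.} Writing $\mu(s)=\cH^{n-1}(A\cap L_s)/\sqrt n$, so that $\lambda_n(A)=\int_0^n\mu$ and $\mu\le\rho$, the geometric fact I would isolate is: for $0\le s<s'\le n$ and any Borel set $B\subseteq L_{s'}$ the lower shadow $\partial_s^-(B)=\{\fx\in L_s:\fx\le\fy\text{ for some }\fy\in B\}$ satisfies
\[
	\frac{\cH^{n-1}(\partial_s^-(B))}{\cH^{n-1}(L_s)}\ \ge\ \frac{\cH^{n-1}(B)}{\cH^{n-1}(L_{s'})}.
\]
I would prove this by approximating $[0,1]^n$ by a fine grid, invoking the symmetric chain decomposition of a product of chains (de~Bruijn--Tengbergen--Kruyswijk), which yields the normalised matching property between \emph{any} two ranks of the grid, and then letting the mesh tend to $0$: the grid slices converge, after rescaling, to the $L_s$, and both the shadows and the relevant normalisations pass to the limit.

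\emph{Using the antichain condition.} Because $A$ is a $c$-antichain, whenever $s<s'$ with $s'-s\ge c$ the set $\partial_s^-(A\cap L_{s'})$ is disjoint from $A\cap L_s$ (a common point $\fx$ would lie below some $\fy\in A\cap L_{s'}$ with $g(\fy)-g(\fx)\ge c$). Together with the shadow inequality this forces, for the density $\alpha:=\mu/\rho\colon[0,n]\to[0,1]$,
\[
	\alpha(s)+\alpha(s')\le 1\qquad\text{whenever }|s-s'|\ge c,
\]
and the task becomes to deduce $\int_0^n\alpha\rho\le\max_a\int_a^{a+c}\rho$. Folding $s\leftrightarrow n-s$ (a pair of separation $\ge c$ once $s<(n-c)/2$) together with $\rho(n-s)=\rho(s)$ controls the two tails.

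\emph{The main obstacle.} The inequalities $\alpha(s)+\alpha(s')\le1$ for $|s-s'|\ge c$ are \emph{by themselves} too weak --- they are satisfied by $\alpha\equiv\tfrac12$ --- so the crux is to inject the additional rigidity that a genuine $c$-antichain carries. Concretely I expect one needs either a Kruskal--Katona--type strengthening of the shadow inequality (shadows over a large gap in $s$ are \emph{proportionally} larger, not merely at least as dense), or an equivalent global device such as a measure-non-decreasing transport map carrying $A\setminus A(c)$ into $A(c)\setminus A$ --- a point of the former being moved, along the order, to a level inside the central slab at $g$-distance at least $c$, hence outside $A$ --- whose Jacobian is kept under control precisely by the log-concavity of $\rho$. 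Turning that last step into a rigorous estimate is the heart of the argument.
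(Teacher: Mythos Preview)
First, note that the present paper does not actually prove Theorem~\ref{thm:engel}; it is quoted from~\cite{engel} as motivating background. So there is no ``paper's own proof'' to compare against, and the assessment below is of your argument on its own merits.

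Your slicing framework and the normalised-matching (shadow) inequality are set up correctly, and you are right that the pairwise constraint $\alpha(s)+\alpha(s')\le 1$ for $|s-s'|\ge c$ is, by itself, insufficient: $\alpha\equiv\tfrac12$ satisfies it, and folding by $s\leftrightarrow n-s$ only yields $\lambda_n(A)\le\tfrac12\bigl(1+\lambda_n(A(c))\bigr)$. Your proposal, however, stops exactly at this point --- you name two possible remedies (a Kruskal--Katona-type sharpening, or a monotone transport from $A\setminus A(c)$ into $A(c)\setminus A$) but carry out neither, and you say explicitly that ``turning that last step into a rigorous estimate is the heart of the argument''. As written the proof is genuinely incomplete.

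The ingredient you are missing is not a sharpening of a single shadow bound but the \emph{integrated} LYM-type inequality
\[
\int_0^n \alpha(s)\,ds \;\le\; c\,.
\]
Once this is available the conclusion is immediate: maximising $\int_0^n\alpha\rho$ subject to $0\le\alpha\le1$ and $\int_0^n\alpha\le c$ is a bathtub problem whose optimiser, by the symmetry and unimodality of $\rho$, is $\alpha=\mathbf{1}_{[(n-c)/2,(n+c)/2]}$, giving exactly $\lambda_n(A(c))$. In the discrete grid $[m]^n$ this LYM bound is obtained not from one shadow estimate but by double counting pairs (element of $A$, maximal chain through it): every maximal chain meets a $c$-antichain in a window of fewer than $c$ consecutive ranks, hence in at most $\lceil cm\rceil$ elements after scaling, whence $\sum_k |A_k|/|L_k|\le cm+O(1)$. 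Passing to the continuous cube via the same grid limit you already invoke for normalised matching then yields $\int_0^n\alpha\le c$. Your shadow inequality alone cannot produce this: it controls $\alpha(s)+\alpha(s')$ for a \emph{fixed} pair of levels, whereas what is needed is a bound on the total mass of $\alpha$. Equivalently, a symmetric-chain decomposition of $[m]^n$ (de~Bruijn--Tengbergen--Kruyswijk) gives the result directly after discretisation, and this is essentially the route taken in~\cite{engel}.
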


As a matter of fact, the antichain $A_n^\star$ and the weak antichain $W_n^\star$ are not only null with respect to the 
Lebesgue measure, 
but they also have the intuitively stronger property of being $(n-1)$-dimensional. One may thus 
wonder  
\begin{enumerate}[label=\nlabel]
	\item\label{it:Q1} whether every antichain and weak antichain in $[0, 1]^n$ is at 
		most $(n-1)$-dimensional
	\item\label{it:Q2} and if so, whether $A_n^\star$ and $W_n^\star$ are in a natural sense the ``largest''  
		$(n-1)$-dimensional antichain resp. weak antichain in $[0, 1]^n$.
\end{enumerate}

The perhaps most natural measure theoretic concepts for making these questions 
precise are Hausdorff dimension and Hausdorff measure, so let us briefly recall their 
definitions. If~$U$ is a non-empty subset of $\RR^n$, we denote its {\it diameter}
by $\diam(U)$. For real numbers~$s\ge0$, $\delta>0$ and for $A\subseteq \RR^n$
we write 
\[
	\cH_{\delta}^{s}(A) 
	= 
	\alpha_s\inf 
	\left\{\sum\nolimits_{i\in\NN}\textrm{diam}(U_i)^s\colon 
		A \subseteq \bigcup\nolimits_{i\in\NN} U_i \; \text{and} \;  
		\textrm{diam}(U_i)\le \delta  
		\text{ for every } i\in\NN\right\}\,,
\]
where the normalisation factor $\alpha_s = \frac{\pi^{s/2}}{2^s\Gamma(s/2+1)}$ 
denotes the volume of the $s$-dimensional sphere of radius $\frac{1}{2}$. Its presence  
ensures that if $s=n$ and $\{U_i\colon i\in\NN\}$ is a collection of mutually 
disjoint balls, then the right side agrees with the total volume of these balls. 
For later use we remark that these quantities $\cH_{\delta}^{s}(A)$ are very robust 
under the addition of various regularity properties that can be imposed on the sets $U_i$. 
For instance, one could insist that these sets need to be closed 
(see e.g.~\cite{Bishop_Peres}*{p.4}).   

Evidently for fixed $s$ and $A$, the value of $\cH_{\delta}^{s}(A)$ increases
as $\delta$ decreases and thus the limit 
\[
	\cH^{s}(A)=\lim_{\delta\rightarrow 0}\cH_{\delta}^{s}(A)\,,
\]
called the $s$-\emph{dimensional Hausdorff measure of $A$}, exists. 
It is well-known that $\cH^n$ agrees on $\RR^n$ with the $n$-dimensional 
Lebesgue outer measure (see e.g.~\cite{Evans_Gariepy}*{p.87}).
In particular, $\cH^{n}([0,1]^n)=1$.

The \emph{Hausdorff dimension} of $A$, denoted by $\dim_H A$, is defined by
\[
	\dim_H(A) = \inf \left\{ s \colon \cH^s(A) = 0   \right\}\,.
\]
One checks easily that $s<\dim_H(A)$ implies $\cH^s(A)=\infty$, while for 
$s>\dim_H(A)$ one has $\cH^s(A)=0$. Therefore, for fixed $A$ the only value 
of $s$ for which $\cH^s(A)$ can have a ``non-trivial'' value is $s=\dim_H(A)$.
We refer to~\cites{Bishop_Peres, Evans_Gariepy, Falconer_1990} for legible textbooks 
on the topic. 

Let us now return to our problems~\ref{it:Q1} and~\ref{it:Q2}. Our main result does indeed
imply that every weak antichain (and hence also every antichain) $A\subseteq [0, 1]^n$ satisfies $\dim_H(A)\le n-1$. The second 
question, however, has a negative answer concerning antichains, but a positive answer concerning weak antichains. 
Notice that $W_n^\star$ is a union of $n$ facets of the unit $n$-cube, wherefore $\cH^{n-1}(W_n^\star)=n$.   

In order to prove the optimality of $W_n^\star$ we establish a more general result,
which we call the {\it projection inequality}. 
It asserts that the $\cH^{n-1}$-measure of a weak antichain $A\subseteq [0, 1]^n$ 
is at most the sum of the $\cH^{n-1}$-measures of the orthogonal projections of $A$
to the $n$ facets of the unit cube containing the origin.   
Such projections are just deleting a fixed coordinate. When $n$ is clear from 
the context and $i\in [n]$ we write $\pi_i\colon \RR^n\longrightarrow \RR^{n-1}$
for the projection defined by
\[
	\pi_i(x_1,\dots,x_n)=(x_1,\dots,x_{i-1},x_{i+1},\dots,x_n)\,.
\]

Our main result on weak antichains in the unit $n$-cube reads as follows. 

\begin{theorem}
\label{thm:main}
	If $A$ is a weak antichain in $[0,1]^n$, then
	\[
		\cH^{n-1}(A) \le \sum_{i=1}^n\cH^{n-1}\bigl(\pi_i(A)\bigr)\,.
	\]
	In particular, 
	\begin{equation}\label{eq:main}
		\cH^{n-1}(A)\le n
				\quad\text{ and }\quad
		\dim_H(A)\le n-1 \,.
	\end{equation}
\end{theorem}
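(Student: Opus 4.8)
The plan is to combine a structural fact about weak antichains with a purely combinatorial projection inequality for finite weak antichains in $\ZZ^n$, passing from the latter to the former by discretisation. Two preliminary remarks. The ``in particular'' clause is automatic: once the displayed inequality is proved, $\pi_i(A)\subseteq[0,1]^{n-1}$ gives $\cH^{n-1}(\pi_i(A))\le\cH^{n-1}([0,1]^{n-1})=1$ for each $i$, so $\cH^{n-1}(A)\le n$, and $\cH^{n-1}(A)<\infty$ then forces $\dim_H(A)\le n-1$. Also the case $\cH^{n-1}(A)=0$ is trivial; and once rectifiability is established, inner regularity of the finite Radon measure obtained by restricting $\cH^{n-1}$ to $A$ lets us assume that $A$ is compact.

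\emph{A structural fact.} Put $\mathbf e=n^{-1/2}(1,\dots,1)$ and let $p\colon\RR^n\to\mathbf e^{\perp}$ be the orthogonal projection onto the hyperplane through the origin normal to the diagonal. For $\fx,\fy\in A$ write $v_i=y_i-x_i$, let $\bar v=\tfrac1n\sum_i v_i$ and $w_i=v_i-\bar v$, so that $\langle\fy-\fx,\mathbf e\rangle=\sqrt n\,\bar v$ and $\|p(\fy)-p(\fx)\|^2=\sum_i w_i^2$. Since $A$ is a weak antichain, neither $\fx\leftY\fy$ nor $\fy\leftY\fx$ holds, hence $\min_i v_i\le0\le\max_i v_i$; subtracting $\bar v$ yields $\min_i w_i\le-\bar v\le\max_i w_i$, so $|\bar v|\le\max_i|w_i|\le\|p(\fy)-p(\fx)\|$. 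Thus $p$ is injective on $A$, and the height function $p(\fx)\mapsto\langle\fx,\mathbf e\rangle$ is $\sqrt n$-Lipschitz; extending it to all of $\mathbf e^{\perp}$ (McShane), $A$ lies in a Lipschitz graph over a hyperplane, and is therefore $(n-1)$-rectifiable with $\cH^{n-1}(A)<\infty$. This already gives $\dim_H(A)\le n-1$, but not the sharp constant $n$.

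\emph{The discrete inequality.} The combinatorial core is the claim that every finite weak antichain $B\subseteq\ZZ^n$ satisfies $|B|\le\sum_{i=1}^n|\pi_i(B)|$. I would prove this by induction on $n$, slicing $B$ along the last coordinate into levels $B_t=\{\fz\in\ZZ^{n-1}\colon(\fz,t)\in B\}$. The levels need not be weak antichains, but the hypothesis forces them to ``decrease'' --- no level containing $\fz'$ lies above a level containing $\fz$ when $\fz\leftY\fz'$ --- and the surplus coming from within the levels is exactly what the extra term $|\pi_n(B)|=|\bigcup_t B_t|$ absorbs; one finishes with a secondary induction on the level structure. Equivalently, one shows that $B$ decomposes as $B^{(1)}\cup\dots\cup B^{(n)}$ with each $B^{(i)}$ meeting every line parallel to the $i$th axis at most once; via Hall's theorem the existence of such a decomposition is equivalent to the inequality holding for all sub-weak-antichains, so it must be proved by hand.

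\emph{The transfer, and the main obstacle.} Given $\eps>0$, let $\widetilde A_m$ be the collection of half-open grid cubes $\fa+[0,\tfrac1m)^n$, with $\fa\in\tfrac1m\ZZ^n$, that meet $A$. If two such cubes have corners $\fa\leftY\fb$, then $\fx\in A$ in the first and $\fy\in A$ in the second satisfy $x_j<a_j+\tfrac1m\le b_j\le y_j$ for all $j$, i.e.\ $\fx\leftY\fy$, contrary to hypothesis; so $\widetilde A_m$ is a weak antichain and $|\widetilde A_m|\le\sum_i|\pi_i(\widetilde A_m)|$. Each $(n-1)$-cell in $\pi_i(\widetilde A_m)$ meets $\pi_i(A)$, so, using that $\pi_i(A)$ is compact and $\cH^{n-1}=\mathcal L^{n-1}$ on $\RR^{n-1}$, one gets $\limsup_m|\pi_i(\widetilde A_m)|/m^{n-1}\le\cH^{n-1}(\pi_i(A))$. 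What remains --- and this is the main obstacle --- is to bound $\cH^{n-1}(A)$ from above by $|\widetilde A_m|/m^{n-1}$ with constant \emph{exactly} $1$: covering $A$ by the cubes of $\widetilde A_m$ as they stand loses a factor $\alpha_{n-1}n^{(n-1)/2}$, and removing this loss is the crux. The idea is to exploit the rectifiability of $A$ --- so that inside each grid cube $A$ is essentially flat, hence contained in a thin slab orthogonal to a suitable coordinate axis --- together with the ``$n$ partial transversals'' form of the discrete inequality, in order to refine the cover while keeping the multiplicative constant equal to $1$. Granting this, letting first $m\to\infty$ and then $\eps\to0$ yields $\cH^{n-1}(A)\le\sum_{i=1}^n\cH^{n-1}(\pi_i(A))$, which is the theorem.
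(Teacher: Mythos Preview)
Your structural observation—that every weak antichain lies on a $\sqrt n$-Lipschitz graph over the hyperplane $\mathbf e^\perp$—is correct and elegant; it yields $(n-1)$-rectifiability and $\cH^{n-1}(A)<\infty$ more directly than the paper, which obtains finiteness only via the discrete inequality with a non-sharp multiplicative constant~$D$. But this is where the proposal stops being a proof.

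The gap is precisely the step you label ``the main obstacle'' and then grant: you never show $\cH^{n-1}(A)\le |\widetilde A_m|/m^{n-1}+o(1)$ with constant~$1$, and the one-sentence sketch does not work. Rectifiability gives approximate tangent planes only $\cH^{n-1}$-a.e., with no uniformity across grid cubes; that tangent plane is typically not close to any coordinate hyperplane (for $A=A_n^\star$ it is $\mathbf e^\perp$ everywhere, at equal angles to all of them); and even a perfectly flat patch covered by a single box still incurs the factor $\alpha_{n-1}n^{(n-1)/2}$ in $\cH^{n-1}_\delta$. Nor does the area formula rescue you: it gives $\cH^{n-1}(A)\le\sum_i\int_A|\nu_i|\,d\cH^{n-1}=\sum_i\int N_i(y)\,dy$ with $N_i$ the multiplicity of $\pi_i|_A$, and for a weak antichain $N_i$ can be unbounded, so this overshoots $\sum_i\cH^{n-1}(\pi_i(A))$. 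The paper resolves the obstacle by a substantially different route that occupies most of its effort: first the linear map $\fx\mapsto\fx-\eps S(\fx)(1,\dots,1)$, whose inverse is $(1-2n\eps)^{-1/2}$-Lipschitz, turns a weak antichain into a genuine antichain; an antichain is then written as the graph of an order-reversing $\hat f_A$ over a \emph{coordinate} hyperplane; a.e.\ differentiability of such functions, combined with Lusin and Egoroff, produces a closed $C\subseteq[0,1]^{n-1}$ of measure $>1-\delta$ on which $\hat f_A$ is \emph{uniformly} differentiable; a Vitali cover of $C$ by small cubes makes the graph bi-Lipschitz with constant $\to1$ to a \emph{linear} graph, where the projection inequality is the elementary $\sqrt{1+\sum c_i^2}\le 1+\sum|c_i|$; a separate decomposition lemma (their Lemma~3.4) is required to sum these local pieces without loss on the right-hand side; and the residue over $[0,1]^{n-1}\setminus C$ is absorbed by the crude constant-$D$ bound. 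None of this machinery appears in your outline, and your Lipschitz-graph-over-the-diagonal representation, though it gives rectifiability, does not interface with the coordinate projections $\pi_i$ in a way that replaces it.

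Two smaller issues. Your reduction to compact $A$ via inner regularity presupposes that $A$ is $\cH^{n-1}$-measurable, which is not assumed; containment in a Lipschitz graph does not confer measurability on $A$ itself. The paper handles arbitrary $A$ by passing to $\bar A$ (still a weak antichain) intersected with $\bigcap_i\pi_i^{-1}(B_i)$ for Borel hulls $B_i\supseteq\pi_i(A)$ of equal outer measure. And the discrete inequality is only sketched; the paper's argument is a short greedy decomposition (let $A_i$ consist of the points of $A\setminus(A_1\cup\dots\cup A_{i-1})$ minimising the $i$th coordinate, and check that $B_{n+1}=\varnothing$ using the weak-antichain hypothesis), simpler than either of your suggestions and worth writing out.
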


In order to find ``better'' antichains than $A_n^\star$ one can start with the ``best'' weak antichain $W_n^\star$ and deform it slightly to obtain an antichain.
This can be done in a polyhedral way, but here we present a ``smooth'' way:
Consider the hypersurface 
\[
	A_p=\left\{\fx \in [0,1]^n\colon \|\fx\|_p=1\right\}
\]
as $p\to\infty$. One can easily verify that the $\ell^p$-unit ball 
$B_p=\{\fx\in\RR^n\colon\|\fx\|_p\leq1\}$ converges with respect 
to the Hausdorff metric to the $\ell^\infty$-unit ball as $p\to\infty$. 
Moreover, it is well-known (\cite{schneider}*{p.219}) that if a sequence of convex 
bodies $K_i$ converges to a convex body~$K$ with respect to the Hausdorff metric, 
then $\cH^{n-1}(\partial K_i)\to\cH^{n-1}(\partial K)$. For these reasons 
we have $\cH^{n-1}(A_p)\to n$ as $p\to \infty$.

\begin{corollary} \label{cor:sup}
	For every positive integer $n$ we have
	\[
		\sup\bigl\{\cH^{n-1}(A)\colon A\subseteq [0, 1]^n \text{ is an antichain}\bigr\}=n\,.
	\]
\end{corollary}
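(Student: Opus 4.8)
The plan is to establish the two bounds ``$\le n$'' and ``$\ge n$'' separately. The upper bound is immediate: every antichain in $[0,1]^n$ is in particular a weak antichain, so the estimate $\cH^{n-1}(A)\le n$ recorded in~\eqref{eq:main} of Theorem~\ref{thm:main} applies to it, and hence the supremum in question is at most~$n$.

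For the lower bound it suffices to produce, for each $\eps>0$, an antichain $A\subseteq[0,1]^n$ with $\cH^{n-1}(A)>n-\eps$, and the natural candidates are the hypersurfaces $A_p$ from the discussion preceding the corollary, for large~$p$. First I would verify that $A_p$ genuinely is an antichain for every $p$ with $1\le p<\infty$: if $\fx,\fy\in A_p$ satisfied $\fx<\fy$, then $x_i\le y_i$ for all~$i$ with strict inequality for at least one index; since all coordinates are nonnegative and $t\mapsto t^p$ is strictly increasing on $[0,\infty)$, this would force $1=\sum_i x_i^p<\sum_i y_i^p=1$, a contradiction. (This argument breaks down for $p=\infty$, where $A_\infty$ is only a weak antichain, which is precisely why the supremum over honest antichains is merely approached and not attained.)

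It then remains to pin down the assertion $\cH^{n-1}(A_p)\to n$ as $p\to\infty$ made in that discussion, by supplying the one missing link. Since $x_i\ge0$ together with $\|\fx\|_p=1$ forces $x_i\le1$, the set $A_p$ is precisely the portion of the $\ell^p$-sphere $\partial B_p$ lying in the closed positive orthant. As $\partial B_p$ is invariant under all sign changes of coordinates, it is covered by the $2^n$ reflected copies of $A_p$, whose pairwise intersections lie in coordinate hyperplanes and are therefore $\cH^{n-1}$-null; hence $\cH^{n-1}(\partial B_p)=2^n\,\cH^{n-1}(A_p)$. Combining this with the two facts quoted before the corollary --- that $B_p\to B_\infty=[-1,1]^n$ in the Hausdorff metric and that $K\mapsto\cH^{n-1}(\partial K)$ is continuous on convex bodies --- gives $2^n\,\cH^{n-1}(A_p)=\cH^{n-1}(\partial B_p)\to\cH^{n-1}(\partial[-1,1]^n)=2n\cdot2^{n-1}=n\,2^n$, so $\cH^{n-1}(A_p)\to n$, and the corollary follows.

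I do not anticipate a real obstacle: the upper bound falls straight out of the main theorem, and the only point that needs a little care in the lower bound is the bookkeeping that identifies $A_p$ with a single ``orthant'' of the $\ell^p$-sphere and disposes of the measure-zero overlaps; everything else is borrowed from the standard convex-geometry facts already recalled in the excerpt.
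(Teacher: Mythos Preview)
Your proposal is correct and follows exactly the approach of the paper: the upper bound comes from Theorem~\ref{thm:main}, and the lower bound from the antichains $A_p$ together with the Hausdorff convergence $B_p\to[-1,1]^n$ and the continuity of surface area for convex bodies. In fact you spell out two points the paper leaves implicit, namely that $A_p$ is an antichain and the $2^n$-fold symmetry identity $\cH^{n-1}(\partial B_p)=2^n\,\cH^{n-1}(A_p)$.
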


Note that $\cH^1(A_2^\star)=\sqrt{2}<2$ and
$\cH^2(A_3^\star)=3\sqrt{3}/4<3$, and in general one can easily check
that~$\cH^{n-1}(A_n^\star)<n$ and thus $A_n^\star$ is indeed not optimal.

Obviously, for every antichain $A\subseteq \RR^n$, the restriction of $\pi_i$ to $A$ is 
injective. We emphasise, however, that the projection inequality does not remain true if the 
antichain-condition is relaxed to an injectivity-condition. Indeed, Foran \cite{Foran}*{p.813} 
constructed bijective functions from $[0,1]$ onto $[0,1]$ whose graphs have arbitrary large 
$\cH^1$-measure.

One of the two central ideas in our approach is to discretise Theorem~\ref{thm:main}. 
In particular, a basic tool for  the proof of Theorem~\ref{thm:main} is the following 
discrete variant of the projection inequality replacing $\cH^{n-1}$ by the counting measure. 

\begin{theorem}
\label{thm:discrete}
	If $A$ is a finite weak antichain in $\ZZ^n$, then 
	\[
		|A| \le \sum_{i=1}^{n} |\pi_i(A)|\,.
	\]
\end{theorem}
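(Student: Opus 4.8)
The plan is to prove the statement by induction on~$n$. The base case $n=1$ is trivial, as a weak antichain in~$\ZZ$ has at most one element and $\pi_1$ collapses everything to a point. For the inductive step, assume the result in dimension $n-1$, let $A\subseteq\ZZ^n$ be a finite weak antichain, and write each $\fx\in A$ as $(\fx',x_n)$ with $\fx'\in\ZZ^{n-1}$. Group the elements of~$A$ into fibres over $\pi_n(A)$: for $\fx'\in\pi_n(A)$ put $F(\fx')=\{t\in\ZZ\colon(\fx',t)\in A\}$. Now remove from~$A$ the lowest point $(\fx',\min F(\fx'))$ of every fibre, and call the remaining set~$C$. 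Since $\pi_n$ maps the removed set bijectively onto $\pi_n(A)$, we get $|A|=|C|+|\pi_n(A)|$, so it suffices to show $|C|\le\sum_{i=1}^{n-1}|\pi_i(A)|$.

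The decisive step, which I expect to be the real obstacle, is to slice~$C$ along the last coordinate and check that each slice is a weak antichain in~$\ZZ^{n-1}$. For $t\in\ZZ$ set $C_t=\{\fx'\in\ZZ^{n-1}\colon(\fx',t)\in C\}$. Suppose $\fa,\fb\in C_t$ satisfied $\fa\leftY\fb$ in~$\ZZ^{n-1}$. Since $(\fa,t)\in C$ lies strictly above the bottom of its fibre, there is some $s<t$ with $(\fa,s)\in A$; but then $(\fa,s)\leftY(\fb,t)$ in~$\ZZ^n$ (the first $n-1$ coordinates increase because $\fa\leftY\fb$, and the last increases because $s<t$), contradicting that~$A$ is a weak antichain. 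The point to stress is that one cannot slice~$A$ itself in this way: the set $\{\fx'\colon(\fx',t)\in A\}$ need not be a weak antichain, because a constant last coordinate destroys strictness, and it is precisely the removal of the fibre-minima that repairs this.

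To finish, apply the inductive hypothesis to each~$C_t$, obtaining $|C_t|\le\sum_{i=1}^{n-1}|\pi_i(C_t)|$, where $\pi_i$ now denotes the analogous projection on~$\ZZ^{n-1}$. For a fixed $i\in\{1,\dots,n-1\}$, the sets $\{(\pi_i(\fx'),t)\colon \fx'\in C_t\}$, as~$t$ ranges over~$\ZZ$, are pairwise disjoint (they have distinct last coordinate) and all contained in $\pi_i(A)\subseteq\ZZ^{n-1}$ (because $(\fx',t)\in C\subseteq A$). Hence $\sum_t|\pi_i(C_t)|\le|\pi_i(A)|$, and summing over $i$ and~$t$ while using $|C|=\sum_t|C_t|$ gives $|C|\le\sum_{i=1}^{n-1}|\pi_i(A)|$, which completes the induction. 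Finiteness of~$A$ is used only to ensure that each fibre $F(\fx')$ has a minimum; the same argument works for any weak antichain whose fibres in the last direction are bounded below, and in fact it exhibits a partition of~$A$ into~$n$ parts on which the respective projections are injective.
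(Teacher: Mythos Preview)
Your proof is correct. The approach is genuinely different from the paper's, though closely related in spirit.

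The paper gives a direct, non-inductive construction: it defines $A_1,\dots,A_n$ greedily by letting $A_i$ consist of those points of $A\setminus(A_1\cup\cdots\cup A_{i-1})$ whose $i$-th coordinate is minimal in their $\pi_i$-fibre, and then argues in one stroke that $A\setminus(A_1\cup\cdots\cup A_n)=\varnothing$ because a surviving point would have a strictly smaller predecessor in every coordinate, contradicting the weak antichain condition. Your argument instead peels off only the $\pi_n$-minima and then uses the crucial observation that each horizontal slice $C_t$ of the remainder is again a weak antichain in $\ZZ^{n-1}$; this is what makes the induction go through, and it is the analogue of the paper's covering claim. Unfolding your induction does indeed yield an explicit partition of $A$ into $n$ parts with injective projections, as you note, though a slightly different one from the paper's. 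The paper's proof is a little shorter and avoids the slicing bookkeeping; your approach has the pedagogical advantage that the role of the weak antichain hypothesis is isolated in a single clean step (the verification that $C_t$ is a weak antichain).
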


Our article is motivated by the idea that several combinatorial statements have 
continuous counterparts. This is a rather old idea, which dates back at least to 
the 70s, and since its conception many results have been reported in a ``measurable'' 
setting (see e.g.~\cites{Bollobas, Bollobas_Varopoulos,  engel, katonaone, katonatwo}) 
or in a ``vector space'' setting (see e.g.~\cites{FranklWilson, klainrota}).

\subsection*{Organisation}
In Section~\ref{sec:discrete} we prove Theorem~\ref{thm:discrete}.
We use this result together with some ideas pertaining to geometric measure theory 
in Section~\ref{sec:anti} for proving the special case of Theorem~\ref{thm:main}  
where $A$ is an antichain. This section might be the technically most demanding part
of the article and we defer an outline of the argument to Subsection~\ref{subsec:outline}.  
In Section~\ref{sec:weak} we complete the proof of Theorem~\ref{thm:main} by reducing it
to the special case that $A$ is an antichain. The concluding remarks in Section~\ref{sec:conc}
describe several problems for future research.

\section{The discrete projection inequality}
\label{sec:discrete}

The key observation on which the proof of Theorem~\ref{thm:discrete} relies is 
that every weak antichain in~$\ZZ^n$ can be partitioned into $n$ parts 
such that for each $i\in[n]$ the projection~$\pi_i(\cdot)$ is injective on the $i$-th
part. 

\begin{proof}[Proof of Theorem~\ref{thm:discrete}]
	Let $A\subseteq \ZZ^n$ be a weak antichain. Define recursively disjoint subsets 
	$A_1, \ldots, A_{n}$ of $A$ as follows. If for some $i\in [n]$ the sets 
	$A_1, \ldots, A_{i-1}$ have just been constructed, set 
	$B_i=A\smallsetminus (A_1\cup\ldots\cup A_{i-1})$ and let $A_i$ be the set of points
	in~$B_i$ whose $i$-th coordinates are minimal. In other words, 
	$A_i$ is defined so as to contain those points $(x_1, \ldots, x_n)\in B_i$ for which 
	there exists no integer $y_i<x_i$ such that 
	$(x_1, \ldots, x_{i-1}, y_i, x_{i+1}, \ldots, x_n)\in B_i$. 
	
	We contend that the set $B_{n+1}=A\smallsetminus (A_1\cup\ldots\cup A_{n})$
	is empty. Indeed, assume indirectly that some point 
	$\fx=(x_1, \ldots, x_n)$ belongs to $B_{n+1}$. Using the definitions of 
	$A_n, \ldots, A_1$ in this order we find integers $y_n<x_n, \ldots, y_1<x_1$ 
	such that $(x_1, \ldots, x_{i-1}, y_i, \ldots, y_n)\in A_i$ for every $i \in [n]$. 
	In particular, the point $\fy=(y_1, \ldots, y_n)$ is in $A$ and satisfies 
	$\fy \leftY \fx$, contrary to~$A$ being a weak antichain. 
	This proves that $A=A_1\dcup A_2\dcup\ldots\dcup A_n$ is a partition.
	
	Now obviously for every $i \in [n]$ the projection $\pi_i$ is injective on $A_i$, 
	whence  
	\[
		|A_i| = |\pi_i(A_i)|\le |\pi_i(A)|\,.
	\]
	We conclude that 
	\[
		|A|=\sum_{i=1}^n |A_i|\le \sum_{i=1}^n|\pi_i(A)|\,,
	\]
	as desired. 
\end{proof}

\section{Antichains}
\label{sec:anti} 

\subsection{Overview}
\label{subsec:outline}

This section deals with a special case of our main theorem, where rather than 
weak antichains we only consider antichains. So explicitly we aim at the following 
result.   

\begin{prop}
\label{prop:anti}
If $A$ is an antichain in $[0,1]^n$, then
\[
	\cH^{n-1}(A)\leq\sum_{i=1}^n\cH^{n-1}\bigl(\pi_i(A)\bigr)\,.
\]
\end{prop}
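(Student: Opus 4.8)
The plan is to deduce Proposition~\ref{prop:anti} from the discrete inequality (Theorem~\ref{thm:discrete}) by a two-stage approximation: first replace $A$ by a compact set, then discretise. Since $\cH^{n-1}$ is a Borel-regular outer measure and the projections $\pi_i$ are $1$-Lipschitz (so $\cH^{n-1}(\pi_i(A)) \le \cH^{n-1}(A)$ and the projections of a Borel set are analytic, hence measurable), it suffices to prove the inequality for closed antichains: indeed, any antichain sits inside its closure, and one checks that the closure of an antichain is again an antichain (this uses the closed condition $\fx < \fy$ rather than $\fx \leftY \fy$ — a limit of strict inequalities in all coordinates survives only as $\le$, which is exactly why the antichain case, not the weak antichain case, is the one amenable to this closing-up step). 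So from now on assume $A \subseteq [0,1]^n$ is compact.

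Next I would set up the discretisation. Fix a large integer $m$ and consider the grid $\frac{1}{m}\ZZ^n$. The naive idea is to let $A^{(m)}$ be the set of grid points lying in (a small neighbourhood of) $A$, apply Theorem~\ref{thm:discrete}, multiply by $m^{-(n-1)}$ and pass to the limit. The two things that have to be controlled are: (i) $A^{(m)}$ must be a weak antichain in $\frac{1}{m}\ZZ^n \cong \ZZ^n$; (ii) the rescaled counting measures $m^{-(n-1)}|A^{(m)}|$ and $m^{-(n-1)}|\pi_i(A^{(m)})|$ must converge to (or be comparable with) $\cH^{n-1}(A)$ and $\cH^{n-1}(\pi_i(A))$ respectively. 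For (i) one has to be careful: taking all grid points within sup-distance $\frac{1}{m}$ of $A$ can destroy the antichain property, so a cleaner choice is needed — e.g. take $A^{(m)} = \{ \fz \in \frac1m\ZZ^n : (\fz + [0,\tfrac1m)^n) \cap A \ne \emptyset\}$, and verify that if $\fz \leftY \fz'$ were both in $A^{(m)}$ then picking witnesses $\fx \in (\fz+[0,\tfrac1m)^n)\cap A$, $\fx' \in (\fz'+[0,\tfrac1m)^n)\cap A$ gives $\fx < \fx'$ after a small argument (here $\fz' - \fz$ has every coordinate $\ge \frac1m$, so $x'_i \ge z'_i \ge z_i + \frac1m > x_i$ — good, this even gives $\fx \leftY \fx'$). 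So this choice makes $A^{(m)}$ genuinely a weak antichain.

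For (ii), the direction I actually need is an upper bound on $\cH^{n-1}(A)$ in terms of $\limsup_m m^{-(n-1)}|A^{(m)}|$, together with the matching upper bound $\limsup_m m^{-(n-1)}|\pi_i(A^{(m)})| \le \cH^{n-1}(\pi_i(A)) + o(1)$ — or rather, I should arrange the inequalities so they point the right way. The cleanest route: the cubes $\fz + [0,\tfrac1m)^n$ for $\fz \in A^{(m)}$ cover $A$, each has diameter $\sqrt{n}/m \to 0$, so $\cH^{n-1}_{\sqrt n/m}(A) \le \alpha_{n-1}\, (\sqrt n/m)^{n-1}\, |A^{(m)}|$, giving $\cH^{n-1}(A) \le \alpha_{n-1} n^{(n-1)/2} \limsup_m m^{-(n-1)}|A^{(m)}|$; and in the other direction $\pi_i(A^{(m)})$ is covered by... hmm, actually I want a \emph{lower} bound on $\cH^{n-1}(\pi_i(A))$ by $m^{-(n-1)}|\pi_i(A^{(m)})|$, which is the genuinely nontrivial measure-theoretic input — it amounts to a (Lipschitz, lower-bounded) estimate saying that a set whose $\frac1m$-grid footprint has many cells must have $\cH^{n-1}$-measure at least of that order, up to constants. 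This is where the "ideas from geometric measure theory" flagged in the introduction must enter, probably via covering/density arguments or via the relation between $\cH^{n-1}$ and Minkowski-type content for the relevant sets; getting the constants to match (so that the $\alpha_{n-1} n^{(n-1)/2}$ factors cancel and one recovers the clean inequality $\cH^{n-1}(A) \le \sum_i \cH^{n-1}(\pi_i(A))$ rather than a version with a dimensional constant) is the main obstacle. I expect the resolution involves not a single grid but an optimisation over affine positions/rotations of the grid, or working with efficient coverings by arbitrary small closed sets (permissible by the robustness remark in the excerpt) and a more careful combinatorial bookkeeping than the crude cube count, so that the discrete inequality is applied not to the cube-footprint but to a near-optimal covering of $A$ itself.
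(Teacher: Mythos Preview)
There are two problems, one minor and one fatal.

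The minor one: your reduction to compact $A$ is flawed, because the closure of an antichain need \emph{not} be an antichain. For instance $A=\bigl\{(t,\tfrac12-t)\colon 0<t\le\tfrac12\bigr\}\cup\{(0,1)\}\subseteq[0,1]^2$ is an antichain, but $\bar A$ contains $(0,\tfrac12)<(0,1)$. You have the two notions reversed: it is the closure of a \emph{weak} antichain that remains a weak antichain, since $\fx\leftY\fy$ is an open condition (the paper uses precisely this in Section~\ref{sec:weak}, reducing weak antichains to antichains via a linear perturbation, not the other way round).

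The fatal one: your discretisation only recovers Lemma~\ref{lem:D}, i.e.\ the inequality with the constant $D=\alpha_{n-1}n^{(n-1)/2}>1$, and you have no mechanism for removing it. Translating the grid cannot help, since $D$ comes from the fixed ratio $\sqrt n$ between the diameter and the side of a cube; rotating the grid destroys the hypothesis of Theorem~\ref{thm:discrete}, which is tied to the coordinate axes. Worse, already for a line segment in the plane one computes that $m^{-1}|A^{(m)}|\to\cH^1(\pi_1(A))+\cH^1(\pi_2(A))$ rather than $\cH^1(A)$, so the discrete inequality is asymptotically an \emph{equality} and the entire loss sits in the crude covering bound $\cH^{n-1}_\delta(A)\le\alpha_{n-1}(\sqrt n/m)^{n-1}|A^{(m)}|$; no bookkeeping at the discrete level can recover it.

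The paper keeps Lemma~\ref{lem:D} only as a cleanup tool. The sharp constant comes from an analytic argument: write $A$ as the graph of an order-reversing $\fh_A\colon[0,1]^{n-1}\to[0,1]$, which is a.e.\ differentiable (Lemma~\ref{lem:CC}); on a closed set $C$ of measure $>1-\delta$ where the differentiability is uniform (via Lusin and Egoroff), a local linearisation plus a Vitali covering yields the projection inequality for $A\cap\pi_n^{-1}(C)$ up to an additive $\delta$ (Lemma~\ref{lem:gmt}). The remainder over $[0,1]^{n-1}\setminus C$ is handled by Lemma~\ref{lem:D}, where $D$ is harmless as it multiplies $O(\delta)$. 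An induction over the coordinate directions, together with a decomposition lemma (Lemma~\ref{lem:decomp}) ensuring the projected measures split additively, finishes the proof.
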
 

As the proof of this estimate is somewhat involved, we would like to devote the present
subsection to a discussion of our basic strategy. An obvious approach is the following 
discretisation procedure: Take a large natural number $m$, cut the unit cube into 
$m^n$ smaller cubes of side length $\frac 1m$ and keep track which of these cubes 
intersect $A$. This situation can be encoded as a weak antichain in $[m]^n$, to 
which the discrete projection inequality (Theorem~\ref{thm:discrete}) applies. 
On first sight one might hope that in the limit $m\to\infty$ this argument would yield 
Proposition~\ref{prop:anti}. But when working out the details, one discovers that 
one looses a constant factor which depends on the dimension $n$, but not on the antichain~$A$ 
itself.    

\begin{lemma}
\label{lem:D}
For every positive integer $n$ there exists a constant $D>0$ such that every weak antichain  
$A\subseteq [0,1]^n$ satisfies
\[ 
	\cH^{n-1}(A) \le D\cdot \sum_{i=1}^{n}\cH^{n-1}\bigl(\pi_i(A)\bigr) \,. 
\]
\end{lemma}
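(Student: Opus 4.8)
The plan is to implement the discretisation procedure sketched above and be careful about the constant one loses. Fix $n$ and a weak antichain $A\subseteq[0,1]^n$; without loss of generality we may assume $A$ is closed, since replacing $A$ by its closure $\overline{A}$ does not destroy the weak-antichain property (if $\fx\leftY\fy$ with $\fx,\fy\in\overline A$, a small perturbation keeps the strict inequalities, so two nearby points of $A$ would already witness $\leftY$) and only increases both sides. Since $\cH^{n-1}$ is an outer measure and the $\pi_i(A)$ are compact, it suffices to prove the inequality with $\cH^{n-1}$ replaced by $\cH^{n-1}_\delta$ for each fixed small $\delta>0$ on the left-hand side; then let $\delta\to0$.

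First I would set up the grid. Choose a large integer $m$ with $\sqrt n/m<\delta$, partition $[0,1]^n$ into the $m^n$ half-open dyadic-type cubes $Q_\fv=\prod_{j=1}^n[v_j/m,(v_j+1)/m)$ indexed by $\fv\in\{0,\dots,m-1\}^n$, and let $\mathcal{S}=\{\fv:Q_\fv\cap A\neq\emptyset\}$. The crucial combinatorial claim is that $\mathcal{S}$, viewed as a subset of $\ZZ^n$, is a weak antichain: if $\fv\leftY\fw$ in $\ZZ^n$, pick $\fx\in Q_\fv\cap A$ and $\fy\in Q_\fw\cap A$; since $v_j+1\le w_j$ for every $j$, we get $x_j<(v_j+1)/m\le w_j/m\le y_j$, so $\fx\leftY\fy$, contradicting that $A$ is a weak antichain. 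Hence Theorem~\ref{thm:discrete} gives $|\mathcal{S}|\le\sum_{i=1}^n|\pi_i(\mathcal{S})|$.

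Next I would translate this counting inequality into a covering estimate. On the left, $A\subseteq\bigcup_{\fv\in\mathcal S}\overline{Q_\fv}$ is a cover by sets of diameter $\sqrt n/m<\delta$, so $\cH^{n-1}_\delta(A)\le\alpha_{n-1}\,|\mathcal{S}|\,(\sqrt n/m)^{n-1}$. On the right, for each $i$ the projected cell $\pi_i(Q_\fv)$ is an $(n-1)$-cube of side $1/m$, and each such $(n-1)$-cube is hit by at most $m$ values of the deleted coordinate $v_i$; therefore the distinct $(n-1)$-cubes $\{\pi_i(Q_\fv):\fv\in\mathcal S\}$ cover $\pi_i(A)$ and their number is at least $|\pi_i(\mathcal S)|/m$, which means the union of these $(n-1)$-cubes has $(n-1)$-Lebesgue measure $\ge |\pi_i(\mathcal S)|\,m^{-n}$ — equivalently $|\pi_i(\mathcal{S})|\le m^n\,\cH^{n-1}(\pi_i(A)\oplus[-1/m,1/m]^{n-1})$, and by outer regularity of Lebesgue measure the latter tends to $m^n$ times a quantity close to $\cH^{n-1}(\pi_i(A))$ as $m\to\infty$. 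Combining, $\cH^{n-1}_\delta(A)\le\alpha_{n-1}n^{(n-1)/2}m^{-(n-1)}\cdot m^n\sum_i(\cdots)=\alpha_{n-1}n^{(n-1)/2}m\sum_i(\cdots)$, which unfortunately still carries a factor $m$. This is exactly the mismatch the overview warns about: the naive bound $|\mathcal S|\le C m^{n-1}\cH^{n-1}(A)$ and $|\pi_i(\mathcal S)|\ge c\,m^{n-1}\cH^{n-1}(\pi_i(A))$ are each off by the extra dimension, and one must instead bound $|\mathcal S|$ from above by roughly $m^{n-1}\cdot(\text{number of }(n-1)\text{-cells in the "shadow" of }A)$ and note that this shadow is covered with bounded multiplicity by the $\pi_i(\mathcal S)$. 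Concretely, for each $\fv\in\mathcal S$ there is some coordinate $i$ along which $Q_\fv$ is, say, among the first cell meeting $A$ in a column — but that is not forced for a general weak antichain, so instead I would argue: the map $\fv\mapsto(\text{any of its projections})$ shows $|\mathcal S|\le m\cdot\sum_i|\pi_i(\mathcal S)|$ trivially, which is worse; the right move is the one in the proof of Theorem~\ref{thm:discrete} applied at scale $m$, giving the partition $\mathcal S=\mathcal S_1\dcup\dots\dcup\mathcal S_n$ with $\pi_i$ injective on $\mathcal S_i$, and then covering $A\cap\bigcup_{\fv\in\mathcal S_i}\overline{Q_\fv}$ by the "columns over $\pi_i(\mathcal S_i)$", each column an $(1/m)\times\cdots\times(1/m)\times 1$ box of diameter $\le\sqrt n$ which one further subdivides into $m$ pieces of diameter $<\delta$ — the count of small pieces is $m|\pi_i(\mathcal S_i)|\le m|\pi_i(A)_{\text{near}}|\cdot m^{n-1}$, landing back at the factor $m$. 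The honest conclusion is that this argument genuinely produces a constant $D=D(n)$ (one can take $D=\alpha_{n-1}n^{n/2}$ or similar after optimising), not $D=1$; proving $D=1$ requires the finer geometric-measure-theoretic work of Section~\ref{sec:anti}, and Lemma~\ref{lem:D} is stated precisely because the crude bound is all that the direct discretisation yields. \emph{The main obstacle}, therefore, is not any single estimate but the structural fact that a cube-cover of $A$ has $\sim m^{n-1}$ cells per unit of $\cH^{n-1}(A)$ while the discrete inequality controls $|\mathcal S|$ only through the $(n-1)$-dimensional shadows, so one unavoidable power of $m$ must be absorbed into the constant $D$; making this rigorous just means choosing, for each $i$, the cover of $\pi_i(A)$ witnessing $\cH^{n-1}_{\delta}(\pi_i(A))$, pulling it back to a slab cover of the corresponding part of $A$, bounding diameters by $\sqrt n$ times side length, and summing.
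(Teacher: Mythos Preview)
Your setup is exactly right and matches the paper: reduce to closed $A$, form the grid $\mathcal S=G_m(A)$, observe $\mathcal S$ is a weak antichain in $\ZZ^n$, apply Theorem~\ref{thm:discrete}, and cover $A$ by the $|\mathcal S|$ cubes of diameter $\sqrt n/m$ to get
\[
\cH^{n-1}_\delta(A)\le \alpha_{n-1}\,(\sqrt n/m)^{n-1}\,|\mathcal S|
\le \alpha_{n-1}\,n^{(n-1)/2}\,m^{-(n-1)}\sum_{i=1}^n|\pi_i(\mathcal S)|.
\]
The gap is in your treatment of the right-hand side. You write that the number of distinct projected $(n-1)$-cells $\{\pi_i(Q_\fv):\fv\in\mathcal S\}$ is ``at least $|\pi_i(\mathcal S)|/m$''. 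But this number is \emph{equal to} $|\pi_i(\mathcal S)|$: the set $\pi_i(\mathcal S)\subseteq\{0,\dots,m-1\}^{n-1}$ already is the set of distinct projection indices. (The ``at most $m$ preimages'' remark would give $|\pi_i(\mathcal S)|\ge|\mathcal S|/m$, which is the wrong quantity and the wrong direction.) Because of this slip you get $|\pi_i(\mathcal S)|\le m^{n}\cdot(\text{measure})$ instead of the correct
\[
|\pi_i(\mathcal S)|\cdot m^{-(n-1)}
=\cH^{n-1}\Bigl(\bigcup_{\fw\in\pi_i(\mathcal S)}\pi_i(Q_{\fw})\Bigr)
\le \cH^{n-1}\bigl(\pi_i(A)^{(\sqrt{n-1}/m)}\bigr),
\]
and it is this phantom factor of $m$ that sends you in circles for the remainder of the proposal.

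Once the counting is corrected, the argument closes immediately (this is Lemma~\ref{lem:Dconv} in the paper): substitute to obtain
\[
\cH^{n-1}_\delta(A)\le \alpha_{n-1}\,n^{(n-1)/2}\sum_{i=1}^n \cH^{n-1}\bigl(\pi_i(A)^{(\sqrt{n-1}/m)}\bigr),
\]
let $m\to\infty$ using compactness of $\pi_i(A)$ and Fact~\ref{fact:haus-mono} to replace the neighbourhood by $\pi_i(A)$ itself, and then let $\delta\to0$. This gives the lemma with $D=\alpha_{n-1}\,n^{(n-1)/2}$, exactly the constant~\eqref{eq:D-intro}. There is no ``unavoidable power of $m$'' to absorb; the discretisation loses only a dimensional constant, and the later sections are needed solely to improve $D$ to $1$, not to make the present lemma go through.
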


To aid the reader's orientation we remark that in Subsection~\ref{subsec:D} we will 
show this estimate for  
\begin{equation} \label{eq:D-intro}
	D=n^{(n-1)/2} \cdot \alpha_{n-1}
	 =\sqrt{\frac 2{\pi^2 n}}\cdot \left(\frac{\pi e}{2}\right)^{n/2}(1+o(1))\,,
\end{equation}
but the precise value of $D$ will be rather immaterial to what follows. 

A completely different and more analytical approach to Proposition~\ref{prop:anti} 
starts from the following observation. Suppose that 
$f\colon [0, 1]^{n-1}\longrightarrow [0, 1]$ is decreasing in each coordinate
and sufficiently smooth, and that we want to study the antichain 
\[
	A=\bigl\{(\fx,f(\fx))\colon \fx\in [0, 1]^{n-1}\bigr\}\,.
\]
Denoting the partial derivatives of $f$ by $D_1f, \ldots, D_{n-1}f$ one checks 
easily that 
\begin{align*}
	\cH^{n-1}(A) &= \int_{[0, 1]^{n-1}} 
		\sqrt{1+|D_1f(\fx)|^2+\ldots+|D_{n-1}f(\fx)|^2}\, \mathrm{d} \fx \\
		\intertext{ and }
	\cH^{n-1}\bigl(\pi_i(A)\bigr) &= \int_{[0, 1]^{n-1}} \big|D_if(\fx)\big|\mathrm{d} \fx 
		\qquad \text{ for all } i\in[n-1],
\end{align*}
for which reason the projection inequality for $A$ follows from 
\[
	\sqrt{1+|D_1f(\fx)|^2+\ldots+|D_{n-1}f(\fx)|^2}
	\le
	1+|D_1f(\fx)|+\ldots+|D_{n-1}f(\fx)|
\]
and from $\pi_n(A)=[0, 1]^{n-1}$. 

In general we need to look at antichains of the form 
\begin{equation}\label{eq:Bf}
	A=\bigl\{(\fx,f(\fx))\colon \fx\in B\bigr\}\,,
\end{equation}
where $B=\pi_n(A)\subseteq [0, 1]^{n-1}$ is arbitrary and $f\colon B\longrightarrow [0, 1]$
is only known to be decreasing in each coordinate, but not necessarily smooth. The question 
to what extent arguments that work well for smooth functions can be extended to more 
general scenarios lies at the very heart of a mathematical area known as {\it geometric 
measure theory}. In Subsection~\ref{subsec:gmt} we shall use some methods from this subject
in order to generalise the previous idea as follows.

\begin{lemma}\label{lem:gmt}
	Given an antichain $A\subseteq [0,1]^n$ and $\delta>0$ there exists a Borel set 
	${B\subseteq [0,1]^{n-1}}$ such that 
	\begin{enumerate}[label=\rmlabel]
		\item\label{it:gmt1} $\cH^{n-1}(B)>1-\delta$
		\item\label{it:gmt2} and the set $A'=A\cap\pi_{n}^{-1}(B)$ satisfies 
			$\cH^{n-1}(A')\le \sum_{i=1}^{n}\cH^{n-1}\bigl(\pi_i(A')\bigr)+\delta. $
	\end{enumerate} 
\end{lemma}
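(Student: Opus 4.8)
Here is a plan for proving Lemma~\ref{lem:gmt}.

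\smallskip

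The key structural observation is that because $A$ is an \emph{antichain} and not merely a weak antichain, every vertical fibre $A\cap\pi_n^{-1}(\fx')$ contains at most one point; hence $A=\bigl\{(\fx',f(\fx'))\colon \fx'\in D\bigr\}$ is the graph of a function $f\colon D\to[0,1]$ over $D=\pi_n(A)\subseteq[0,1]^{n-1}$, and $f$ is moreover \emph{strictly} decreasing in the product order (if $\fx'<\fy'$ in $D$ satisfied $f(\fx')\le f(\fy')$, then $(\fx',f(\fx'))<(\fy',f(\fy'))$ would be two comparable points of $A$). In particular each $\pi_i|_A$ is injective. The plan is to replace $f$ by a $C^1$ function on all but a small portion of the base cube and then to invoke the area formula. (Measurability of the sets below is not automatic, since $D$ need not be Lebesgue measurable for a general antichain; this is a routine point, handled via measurable hulls and the Borel regularity of $\cH^{n-1}$ together with the fact that the parametrisations introduced below are $C^1$ diffeomorphisms.)

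First I would extend $f$ to a function $\bar f\colon[0,1]^{n-1}\to[0,1]$ that is decreasing in the product order and satisfies $\bar f|_D=f$, for instance $\bar f(\fx')=\sup\bigl(\{0\}\cup\{f(\fy')\colon \fy'\in D,\ \fy'\ge\fx'\}\bigr)$. A bounded function that is monotone in each coordinate has bounded variation, so $\bar f\in BV\bigl([0,1]^{n-1}\bigr)$; by the Lusin-type approximation theorem for $BV$ functions (see e.g.~\cite{Evans_Gariepy}) there exists $g\in C^1(\RR^{n-1})$ with $\cH^{n-1}\bigl(\{\fx'\in[0,1]^{n-1}\colon \bar f(\fx')\ne g(\fx')\}\bigr)<\delta$. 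Fix a Borel set $N\supseteq\{\bar f\ne g\}\cap[0,1]^{n-1}$ with $\cH^{n-1}(N)<\delta$ and put $B=[0,1]^{n-1}\setminus N$. Then $B$ is Borel and $\cH^{n-1}(B)>1-\delta$, which is~\ref{it:gmt1}.

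For~\ref{it:gmt2}, set $E=D\cap B$, so that $A'=A\cap\pi_n^{-1}(B)$ is the graph of $f|_E$; since $E\subseteq\{\bar f=g\}\cap D$ one has $f=g$ on $E$, and therefore $A'$ lies on the $C^1$ hypersurface parametrised by $P(\fx')=(\fx',g(\fx'))$, whose Jacobian equals $\sqrt{1+|D_1g|^2+\dots+|D_{n-1}g|^2}$. For $i<n$ the $C^1$ map $Q_i=\pi_i\circ P$ has Jacobian $|D_ig|$ and is injective on $E$: two points of $E$ agreeing in all but the $i$-th coordinate are comparable in the product order, so strict monotonicity of $f=g$ on $E$ forces them to have different $g$-values. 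Since $P$ and the maps $Q_i|_E$ are injective, the area formula yields $\cH^{n-1}(A')=\int_E\sqrt{1+\sum_{i<n}|D_ig|^2}\,\mathrm{d}\fx'$, $\cH^{n-1}\bigl(\pi_i(A')\bigr)=\int_E|D_ig|\,\mathrm{d}\fx'$ for every $i\in[n-1]$, and $\cH^{n-1}\bigl(\pi_n(A')\bigr)=\cH^{n-1}(E)$. Combining these identities with the elementary pointwise bound $\sqrt{1+a_1^2+\dots+a_{n-1}^2}\le 1+|a_1|+\dots+|a_{n-1}|$ gives
\[
	\cH^{n-1}(A')=\int_E\sqrt{1+\sum_{i=1}^{n-1}|D_ig|^2}\,\mathrm{d}\fx'
	\le\int_E\Bigl(1+\sum_{i=1}^{n-1}|D_ig|\Bigr)\,\mathrm{d}\fx'
	=\sum_{i=1}^{n}\cH^{n-1}\bigl(\pi_i(A')\bigr)\,,
\]
so that~\ref{it:gmt2} holds, in fact without any need for the error term $\delta$.

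The crux of the argument is the passage from the smooth heuristic sketched before the lemma to arbitrary monotone functions. One cannot work with the approximate derivatives of $\bar f$ directly, since the graph of a function monotone in each coordinate may carry large $\cH^{n-1}$-measure over a Lebesgue-null subset of the base (comprising the jump set of $\bar f$ together with the set supporting the Cantor part of its gradient) — already the graph of the Cantor function restricted to the Cantor set has positive $\cH^1$-measure above a null base set. What rescues the lemma is that these pathological pieces of $A$ lie over an $\cH^{n-1}$-null subset of $[0,1]^{n-1}$ and are simply discarded on passing to $A'$: discarding them costs nothing against the constraint $\cH^{n-1}(B)>1-\delta$, and their $\cH^{n-1}$-measure never has to be estimated. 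This is why the exceptional set is measured on the base cube rather than on $A$, and why a genuinely $C^1$ — not merely continuous — Lusin approximation is required: on the surviving set the graph becomes an honest $C^1$ surface, to which the classical area formula, and hence the computation above, applies verbatim. The remaining points — that functions monotone in each coordinate belong to $BV$, that $BV$ functions admit such approximations, and that the possible non-measurability of $D$ can be absorbed — are standard.
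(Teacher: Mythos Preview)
Your approach is correct and takes a genuinely different route from the paper. The paper obtains the good set $B$ by a hands-on construction: applying Lusin's and Egoroff's theorems to the partial derivatives of the associated order-reversing function $\fh_A$ yields a closed set $C\subseteq(0,1)^{n-1}$ on which $\fh_A$ is uniformly differentiable with continuous partials; a Vitali covering of $C$ by ``nice'' cubes (on each of which the graph of $\fh_A$ is bi-Lipschitz equivalent to the graph of its tangent linear map) then produces $B$, and on each cube the inequality is reduced via Lipschitz estimates to the purely linear case, for which the outer-measure change-of-variables formula applies directly. Your $C^1$ Lusin-type approximation of the BV function $\bar f$ packages this entire construction into a single black-box theorem and lets you invoke the area formula globally; this is more streamlined but imports heavier off-the-shelf machinery, whereas the paper's argument is more elementary and self-contained. (One small remark: the BV approximation in~\cite{Evans_Gariepy} produces a Lipschitz function, and a further standard step---the $C^1$ Lusin approximation of Lipschitz functions---is needed to reach $g\in C^1$.)

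One point is less routine than you indicate. The maps $Q_i$ are \emph{not} $C^1$ diffeomorphisms, since their Jacobian $|D_ig|$ may vanish, so your stated mechanism for absorbing the possible non-measurability of $E=D\cap B$ does not cover them. The gap can be closed as follows. Choose a measurable hull $\tilde E\subseteq B$ of $E$; then $\cH^{n-1}(A')\le\int_{\tilde E}\sqrt{1+|\nabla g|^2}$ and $\cH^{n-1}(\pi_n(A'))=\cH^{n-1}(\tilde E)$ are immediate, while for $i<n$ one must show $\int_{\tilde E}|D_ig|\le\cH^{n-1}(Q_i(E))$. Because $g=\bar f$ on $\tilde E$ and $\bar f$ is order-reversing, on each line parallel to the $i$-th axis the restriction of $g$ to the slice of $\tilde E$ agrees with a non-increasing function, whose level sets containing more than one point correspond to at most countably many values; hence the area-formula multiplicity for $Q_i$ on $\tilde E$ equals $1$ almost everywhere, giving $\int_{\tilde E}|D_ig|=\cH^{n-1}(Q_i(\tilde E))$. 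Finally, for any Borel $F\supseteq Q_i(E)$ the set $Q_i^{-1}(F^c)\cap\tilde E$ is a measurable subset of $\tilde E\setminus E$, hence null, so $Q_i(\tilde E)\setminus F$ is a Lipschitz image of a null set and therefore null; this yields $\cH^{n-1}(Q_i(\tilde E))=\cH^{n-1}(Q_i(E))$, and your computation then goes through---indeed without the additive $\delta$.
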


Now roughly speaking one may hope to prove Proposition~\ref{prop:anti} (up to an 
arbitrarily small additive error) by using Lemma~\ref{lem:gmt} in each coordinate 
direction, each time cutting out a substantial piece of $A$ whose $\cH^{n-1}$-measure
can be estimated quite efficiently by part~\ref{it:gmt2}. There will remain a ``small''
left-over part of~$A$, which can then be handled by means of Lemma~\ref{lem:D}. 

There is one final technical hurdle one needs to overcome when pursuing such a plan.     
The problem is that, in general, Hausdorff measure is only known to be subadditive.
So when one attempts to prove the projection inequality for an antichain $A$ by splitting 
it into two pieces, handling both pieces separately, and adding up the results, 
one can get into trouble with the right sides. We shall use the following lemma
for getting around this difficulty. 
  
\begin{lemma}
\label{lem:decomp}
	Let $A\subseteq [0,1]^n$ be an antichain and let $[0,1]^{n-1}= B'\dcup B''$ be a 
	partition of the $(n-1)$-dimensional unit cube into Borel sets. 
	Setting $A'=A\cap \pi_{n}^{-1}(B')$ and $A''=A\cap \pi_{n}^{-1}(B'')$ we have 
	\[
		\cH^{n-1}\bigl(\pi_i(A)\bigr) 
		= 
		\cH^{n-1}\bigl(\pi_i(A')\bigr) + \cH^{n-1}\bigl(\pi_i(A'')\bigr)
	\]
	for every $i\in [n]$.
\end{lemma}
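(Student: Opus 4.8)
The plan is to recast the claimed identity, for each fixed $i\in[n]$, as an instance of Carath\'eodory's criterion for the $(n-1)$-dimensional Lebesgue outer measure, with which $\cH^{n-1}$ coincides on $\RR^{n-1}$. Two elementary observations come first. Since $A$ is an antichain, $\pi_i$ is injective on $A$: two points of $A$ agreeing in every coordinate but the $i$-th would be strictly comparable. Hence $A=A'\dcup A''$ forces $\pi_i(A)=\pi_i(A')\cup\pi_i(A'')$ to be a \emph{disjoint} union, so $\pi_i(A'')=\pi_i(A)\setminus\pi_i(A')$; with subadditivity of $\cH^{n-1}$ this already gives ``$\le$''. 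It therefore suffices to find, for each $i$, a Lebesgue measurable set $M_i\subseteq\RR^{n-1}$ with $\pi_i(A')=\pi_i(A)\cap M_i$: then Carath\'eodory's criterion gives $\cH^{n-1}(\pi_i(A))=\cH^{n-1}(\pi_i(A)\cap M_i)+\cH^{n-1}(\pi_i(A)\setminus M_i)=\cH^{n-1}(\pi_i(A'))+\cH^{n-1}(\pi_i(A''))$, as wanted.

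For $i=n$ one may take $M_n=B'$, since $\pi_n(A')=\pi_n(A)\cap B'$ and $B'$ is Borel. For $i<n$ I would exploit that, $\pi_i$ being injective on $A$, the set $A$ is a graph over $\pi_i(A)$: let $h\colon\pi_i(A)\to[0,1]$ return the $i$-th coordinate of the unique point of $A$ lying above $\fy$, and let $\rho\colon\pi_i(A)\to[0,1]^{n-1}$ send $\fy$ to the list of the first $n-1$ coordinates of that point (so $\rho$ is obtained from $\fy$ by re-inserting $h(\fy)$ in the $i$-th slot and deleting the last entry). The defining condition of $A'$ — that the first $n-1$ coordinates lie in $B'$ — then reads $\rho(\fy)\in B'$, i.e.\ $\pi_i(A')=\rho^{-1}(B')$, and I must produce a Lebesgue measurable $M_i$ with $\rho^{-1}(B')=\pi_i(A)\cap M_i$.

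The key point is that, although $h$ and $\rho$ need not be measurable — the domain $\pi_i(A)$ can be an arbitrary, possibly non-measurable subset of $\RR^{n-1}$ — the antichain property forces $h$ to be decreasing in each coordinate: if $\fy\le\fy'$ in $\pi_i(A)$ but $h(\fy)\le h(\fy')$, the two points of $A$ above $\fy$ and $\fy'$ would be comparable. Consequently, for every real $c$ the superlevel set $\{\fy\in\pi_i(A):h(\fy)>c\}$ is the intersection of $\pi_i(A)$ with the down-set of $\RR^{n-1}$ it generates, and likewise $\{h<c\}$ is the trace on $\pi_i(A)$ of an up-set; and down-sets and up-sets in $\RR^{n-1}$ are Lebesgue measurable. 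Now let $\Sigma$ be the family of all $N\subseteq\RR^{n-1}$ for which $\rho^{-1}(N)=\pi_i(A)\cap M$ for some Lebesgue measurable $M$. Since $\rho$ is a total, single-valued map on $\pi_i(A)$, $\Sigma$ is closed under complementation and countable unions, hence is a $\sigma$-algebra; moreover it contains the $\rho$-preimage of every coordinate half-space of $\RR^{n-1}$ — for the slot filled by $h$ this is the monotonicity just noted, for the remaining slots $\rho$ merely reproduces a coordinate of $\fy$. As coordinate half-spaces generate the Borel $\sigma$-algebra, $\Sigma$ contains all Borel sets, in particular $B'$, which yields the required $M_i$.

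I expect this measurability issue to be the main obstacle: one is not entitled to assume that $A$, or the graphing function $h$, or the pieces $\pi_i(A')$ and $\pi_i(A'')$, are Borel or even Lebesgue measurable, so the additivity cannot be read off from a naive measurability of the parts. The way around it is the observation that the level sets of a coordinatewise-monotone function, though possibly non-measurable themselves, are always traces on the fixed domain $\pi_i(A)$ of genuinely measurable monotone subsets of $\RR^{n-1}$ — which is precisely the amount of measurability that Carath\'eodory's criterion consumes. The only external input is the standard fact that monotone (down- and up-) subsets of Euclidean space are Lebesgue measurable.
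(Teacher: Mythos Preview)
Your argument is correct and follows the same blueprint as the paper's --- reduce to Carath\'eodory's criterion, verify it on coordinate half-spaces, and handle the one non-trivial coordinate via the monotonicity of the graphing function --- but your execution is a bit more direct. The paper introduces the push-forward outer measure $\nu(E)=\cH^{n-1}\bigl(\pi_{i}(\pi_n^{-1}(E)\cap A)\bigr)$ on subsets of $[0,1]^{n-1}$ and verifies that $B'$ is $\nu$-measurable; you instead stay in the target of $\pi_i$ and exhibit a Lebesgue-measurable $M_i$ with $\pi_i(A')=\pi_i(A)\cap M_i$. For the critical half-space the paper works with the globally extended function $\fh_A$ and the measurable set $L$ of Fact~\ref{fact:L}, and then needs a null-set correction: the ``level graph'' $N=\{(\fz,\fh_A(\fz,c)):\fz\in[0,1]^{n-2}\}$ is a weak antichain in $[0,1]^{n-1}$ and hence $\cH^{n-1}$-null by Corollary~\ref{cor:N}. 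Your observation that $\{h>c\}$ is exactly the trace on $\pi_i(A)$ of the down-set it generates dispenses with this correction, and thereby removes the dependence of Lemma~\ref{lem:decomp} on Corollary~\ref{cor:N} (and hence on the discretisation Lemma~\ref{lem:D}). Both routes rest on the same external fact --- Lebesgue measurability of coordinatewise-monotone subsets of $\RR^{n-1}$ --- which is essentially the content of Lemma~\ref{lem:CC} and Fact~\ref{fact:L}.
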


We conclude the present subsection by showing that our three lemmas do indeed 
imply Proposition~\ref{prop:anti}. The proofs of the lemmas themselves are deferred 
to the three subsections that follow. But it will be convenient to prove 
Lemma~\ref{lem:decomp} in Subsection~\ref{subsec:decomp} before we turn our 
attention to Lemma~\ref{lem:gmt} in Subsection~\ref{subsec:gmt}.

\begin{proof}[Proof of Proposition~\ref{prop:anti} assuming Lemma~\ref{lem:D}--\ref{lem:decomp}]
	Fix a dimension $n\ge 1$ as well as some $\delta>0$ and let $D$ be the number 
	provided by Lemma~\ref{lem:D}. We call a subset $I\subseteq [n]$ \emph{good} 
	if every antichain $A\subseteq [0,1]^n$ with $\cH^{n-1}(\pi_i(A))\le \delta$ 
	for all $i\in I$ satisfies 
	\[
		\cH^{n-1}(A) \le \sum_{i=1}^{n} \cH^{n-1}\bigl(\pi_i(A)\bigr) + (nD+n-|I|)\delta\,.
	\]
	As a consequence of Lemma~\ref{lem:D} we know that the set $[n]$ is good. 
	Hence there exists a minimal good set $I\subseteq [n]$. 
	Assume for the sake of contradiction that $I\ne\varnothing$. 
	By permuting the coordinates if necessary we may suppose that $n\in I$. 
	Consider any antichain $A\subseteq [0,1]^n$ with 
	\begin{equation} \label{eq:I-n-good}
		\cH^{n-1}\bigl(\pi_i(A)\bigr)\le \delta 
		\quad \text{ for all } i\in I\setminus \{n\}\,.
	\end{equation} 
	By Lemma~\ref{lem:gmt} there exists a Borel set $B'$ with $\cH^{n-1}(B')>1-\delta$ 
	such that the set $A'= A\cap \pi_{n}^{-1}(B')$ satisfies 
	\begin{equation}\label{eq:43}
		\cH^{n-1}(A') \le \sum_{i=1}^{n} \cH^{n-1}\bigl(\pi_i(A')\bigr) + \delta\,. 
	\end{equation}
	Set $B''= [0,1]^{n-1}\setminus B'$ and $A''=A\cap\pi_{n}^{-1}(B'')$. 
	Notice that $\cH^{n-1}(\pi_i(A''))\le \delta$ for all $i\in I$. 
	This is because for $i\in I\setminus \{n\}$ we have
	\[ 
		\cH^{n-1}\bigl(\pi_i(A'')\bigr) \le \cH^{n-1}\bigl(\pi_i(A)\bigr)\le \delta\,, 
	\]
	while for $i=n$ the first property of $B'$ yields  
	\[
		\cH^{n-1}\bigl(\pi_n(A'')\bigr) 
		\le 
		\cH^{n-1}(B'')=1-\cH^{n-1}(B') \le \delta\,.
	\]
	Since $I$ is good, it follows that 
	\begin{equation*}
		\cH^{n-1}(A'') \le \sum_{i=1}^{n}\cH^{n-1}\bigl(\pi_i(A'')\bigr) + (nD+n-|I|)\delta\,.
	\end{equation*} 
	Adding~\eqref{eq:43} and taking Lemma~\ref{lem:decomp} into account, we deduce 
	\begin{eqnarray*}
		\cH^{n-1}(A) &\le& \cH^{n-1}(A') + \cH^{n-1}(A'') \\
		&\le& 
		\sum_{i=1}^{n} \left( \cH^{n-1}\bigl(\pi_i(A')\bigr) 
			+ \cH^{n-1}\bigl(\pi_i(A'')\bigr) \right) 
			+ (nD+n-|I|)\delta+\delta \\
		&=& 
			\sum_{i=1}^{n}\cH^{n-1}\bigl(\pi_i(A)\bigr) + (nD+n-|I\setminus\{n\}|)\delta\,.
	\end{eqnarray*}
	As this argument applies to any antichain $A$ with~\eqref{eq:I-n-good}, we have thereby
	shown that the set $I\setminus\{n\}$ is good as well, contrary to the minimality 
	of $I$.
	
	This contradiction proves that $I=\varnothing$ is good, or in other words that for 
	every antichain $A\subseteq [0,1]^n$ the estimate
	\[
		\cH^{n-1}(A) 
		\le 
		\sum_{i=1}^{n} \cH^{n-1}\bigl(\pi_i(A)\bigr) + n(D+1)\delta
	\]
	holds. In the limit $\delta\to 0$ this proves Proposition~\ref{prop:anti}.
\end{proof}

\subsection{Discretisation} 
\label{subsec:D} 

This subsection deals with the proof of Lemma~\ref{lem:D}. Let us recall the following   
well-known continuity  property of the Hausdorff measure
(see~\cite{Mattila}*{Theorem~1.4}, the remarks that follow, and~\cite{Mattila}*{Corollary 4.5}). 

\begin{fact}\label{fact:haus-mono}
	If $m\ge k\ge 1$ are integers, $E_1\subseteq E_2\subseteq \dots \subseteq \RR^m$ 
	are arbitrary sets, and $E=\bigcup_{\ell\ge 1} E_{\ell}$, then 
	\begin{align*}\tag*{$\Box$}
		\hfill \cH^k(E) = \lim_{\ell\to\infty} \cH^k(E_{\ell})\,. 
	\end{align*}
\end{fact}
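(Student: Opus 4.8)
The statement to prove is Fact~\ref{fact:haus-mono}, the continuity from below of Hausdorff measure along an increasing sequence of sets. Here is how I would approach it.

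\medskip

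The plan is to exploit the fact that Hausdorff measure, although defined as a limit of the premeasures $\cH^k_\delta$, is actually a genuine Borel-regular outer measure, for which continuity from below along increasing sequences is a standard feature — \emph{provided} the sets involved are measurable, or can be enlarged to measurable sets without changing the measure. So the first step is to recall that $\cH^k$ is an outer measure on $\RR^m$ (countable subadditivity is immediate from the definition, since one can concatenate covers), and that it is Borel regular: every set $E \subseteq \RR^m$ is contained in a Borel set $\widehat E$ (in fact a $G_\delta$ set) with $\cH^k(\widehat E) = \cH^k(E)$. This is the content of the references cited (\cite{Mattila}*{Theorem~1.4} and the surrounding discussion). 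The second step is to reduce to the case where the $E_\ell$ are themselves Borel, or at least to handle the general case by a regularization trick.

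\medskip

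For the reduction, given the increasing chain $E_1 \subseteq E_2 \subseteq \cdots$, I would for each $\ell$ pick a $G_\delta$ set $F_\ell \supseteq E_\ell$ with $\cH^k(F_\ell) = \cH^k(E_\ell)$, and then replace $F_\ell$ by $\widetilde F_\ell = \bigcap_{j \ge \ell} F_j$; this is still Borel, still contains $E_\ell$ (since $E_\ell \subseteq E_j \subseteq F_j$ for all $j \ge \ell$), still satisfies $\cH^k(E_\ell) \le \cH^k(\widetilde F_\ell) \le \cH^k(F_\ell) = \cH^k(E_\ell)$, and is now monotone in $\ell$. So without loss of generality the $E_\ell$ are Borel and hence $\cH^k$-measurable. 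The third step is then the classical one: for measurable sets $E_1 \subseteq E_2 \subseteq \cdots$ with union $E$, write $E = E_1 \cup \bigcup_{\ell \ge 1}(E_{\ell+1} \setminus E_\ell)$ as a disjoint union of measurable sets, apply countable additivity of $\cH^k$ on its measurable sets, and recognize the resulting series as the telescoping limit $\lim_\ell \cH^k(E_\ell)$. Finally, since $E = \bigcup_\ell E_\ell$ and $E \supseteq \widetilde E := \bigcup_\ell \widetilde F_\ell \supseteq E$ forces $\cH^k(E) = \cH^k(\widetilde E) = \lim_\ell \cH^k(\widetilde F_\ell) = \lim_\ell \cH^k(E_\ell)$, we are done; monotonicity gives $\cH^k(E) \ge \lim_\ell \cH^k(E_\ell)$ for free, and the above gives the reverse.

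\medskip

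The only genuine subtlety — the part I would be most careful about — is that continuity from below is \emph{false} for the finite-scale premeasures $\cH^k_\delta$ and can fail for arbitrary outer measures, so the argument really does need Borel regularity of $\cH^k$ together with the fact that $\cH^k$ is a metric outer measure (Carathéodory's criterion), which is what guarantees that Borel sets are $\cH^k$-measurable. Both facts are standard and are exactly what the cited results in \cite{Mattila} supply, so in the write-up I would simply invoke them rather than reprove them; the remaining bookkeeping with the $G_\delta$ hulls and the telescoping sum is routine.
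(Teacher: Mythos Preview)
The paper does not actually prove this fact: it states it, cites \cite{Mattila}*{Theorem~1.4 and Corollary~4.5}, and closes with a $\Box$. Your proposal goes further and sketches the standard argument behind those cited results --- Borel regularity of $\cH^k$ plus continuity from below for measurable sets --- which is exactly the right thing to do.

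One small slip to fix in your final paragraph: you write ``$E \supseteq \widetilde E := \bigcup_\ell \widetilde F_\ell \supseteq E$'', but only the second inclusion is true in general; the Borel hulls $\widetilde F_\ell$ may be strictly larger than $E_\ell$, so $\widetilde E$ may strictly contain $E$. This is harmless for the argument: from $E \subseteq \widetilde E$ you get $\cH^k(E) \le \cH^k(\widetilde E) = \lim_\ell \cH^k(\widetilde F_\ell) = \lim_\ell \cH^k(E_\ell)$, and the reverse inequality is, as you note, immediate from monotonicity. With that inclusion corrected, the proof is complete and correct.
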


The discretisation procedure by means of which we shall establish Lemma~\ref{lem:D}
works as follows. Fix a large positive integer $m$ the size of 
which determines how fine our approximation is going to be. Let 
$[0, 1]=I_1\dcup \ldots\dcup I_m$ be a partition of the unit interval into $m$ 
consecutive intervals of length $\frac 1m$. It is somewhat immaterial how we proceed with 
the boundary points $\frac 1m, \ldots, \frac{m-1}m$, but for definiteness we set   
\[
	I_{j}=
	\begin{cases}
		\bigl[\frac{j-1}{m},\frac{j}{m}\bigr) &\text{ if } j \in [m-1],\\
		\bigl[\frac{m-1}{m},1\bigr] &\text{ if } j = m.
	\end{cases}
\]
This gives rise to the partition 
\[
[0,1]^n=\bigdcup_{\mathbf{d} \in [m]^n} C(\mathbf{d})
\]
of the $n$-dimensional unit cube into $m^n$ subcubes defined by 
\[
	C(\mathbf{d})=I_{d_1}\times \dots \times I_{d_n}
\]
for every $\fd=(d_1, \ldots, d_n)\in [m]^n$.

For any subset $W\subseteq [0, 1]^n$ (not necessarily an antichain) we set
\[
	G_m(W)=\bigl\{\mathbf{d} \in [m]^n\colon \olC(\mathbf{d}) \cap W \neq \emptyset\bigr\}
\]
and observe that the disjoint union
\begin{equation} \label{eq:nest}
	H_m(W)= \bigdcup_{\mathbf{d} \in G_m(W)} \olC(\mathbf{d})
\end{equation}
is a superset of $W$. Conversely, every point in $H_m(W)$ has at most the distance 
$\sqrt{n}/m$, the common diameter of our small cubes, from an appropriate point in $W$.

Let us introduce some useful notation for such situations. 
Given $S\subseteq \RR^n$ and a point $\fx\in\RR^n$, 
we set $\dist(\fx,S) = \inf\{\|\fx-\mathbf{s}\|\colon \fs\in S\}$, 
where~$\|\cdot\|$ denotes the Euclidean norm. 
For a given positive real number $\delta$ and $S\subseteq \RR^n$
the {\it $\delta$-neighbourhood} of $S$ is defined by 
\begin{equation}\label{eq:Sdelta}
	S^{(\delta)} = \{\fx\in\RR^n \colon\dist(\fx,S) \le \delta\}\,. 
\end{equation}

Summarising the above discussion, we have 
\[
	W\subseteq H_m(W)\subseteq W^{(\sqrt{n}/m)}
\]
for every $W\subseteq [0, 1]^n$ and every $m\in \NN$. In the special case where $W$
is closed we have $W=\bigcap_{m\ge 1} W^{(\sqrt{n}/m)}$ and, consequently, 
Fact~\ref{fact:haus-mono} yields 
\begin{equation}\label{eq:closed}
	\lim_{m\to\infty} \frac{|G_m(W)|}{m^n} = \cH^n(W)\,.
\end{equation}

\begin{lemma}\label{lem:Dconv}
	If $A_1,\ldots,A_n\subseteq [0,1]^{n-1}$ are closed 
	sets and $A\subseteq [0,1]^n$ is a weak antichain with $\pi_i(A)\subseteq A_i$ for 
	all~$i\in [n]$, then 
	\[
		\cH^{n-1}(A) \le D\cdot \sum_{i=1}^{n}\cH^{n-1}(A_i)\,,
	\]
	where $D$ denotes the constant introduced in~\eqref{eq:D-intro}.
\end{lemma}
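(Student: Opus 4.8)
The plan is to prove Lemma~\ref{lem:Dconv} by transferring the discrete projection inequality (Theorem~\ref{thm:discrete}) through the cube-discretisation machinery set up above, and then to deduce Lemma~\ref{lem:D} from it by a routine closure/monotonicity argument. So first I would observe that it suffices to establish Lemma~\ref{lem:Dconv}: given an arbitrary weak antichain $A\subseteq[0,1]^n$, one can replace each $\pi_i(A)$ by its closure $\overline{\pi_i(A)}\subseteq[0,1]^{n-1}$, note that $\cH^{n-1}(\overline{\pi_i(A)})=\cH^{n-1}(\pi_i(A))$ need not hold in general, so instead one works with the closure $\overline{A}$ directly --- but $\overline{A}$ need not be a weak antichain. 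The cleaner route, which I expect the authors take, is: apply Lemma~\ref{lem:Dconv} with $A_i=\overline{\pi_i(A)}$ after first passing to a suitable compact exhaustion, or simply prove Lemma~\ref{lem:D} for closed $\pi_i(A)$ and remark that in the intended application (inside Subsection~\ref{subsec:D}) the sets are already closed. In any case the real content is Lemma~\ref{lem:Dconv}.

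For the proof of Lemma~\ref{lem:Dconv} itself, fix the weak antichain $A\subseteq[0,1]^n$ and closed sets $A_i\supseteq\pi_i(A)$. For each large $m$ consider $G_m(A)\subseteq[m]^n$ as defined above. The first key step is that $G_m(A)$ is a weak antichain in $\ZZ^n$: if $\fd,\fd'\in G_m(A)$ satisfied $\fd\leftY\fd'$, pick $\fx\in\overline{C(\fd)}\cap A$ and $\fx'\in\overline{C(\fd')}\cap A$; since consecutive cubes in each coordinate share only boundary points and $d_i'\ge d_i+1$, one gets $x_i<x_i'$ for every $i$, i.e. $\fx\leftY\fx'$, contradicting that $A$ is a weak antichain. (One must be slightly careful with the half-open versus closed intervals at the top cube $I_m$, but the strict inequality $d_i'\ge d_i+1$ gives enough room.) Hence Theorem~\ref{thm:discrete} applies and yields $|G_m(A)|\le\sum_{i=1}^n|\pi_i(G_m(A))|$. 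The second key step is to compare these quantities with Hausdorff measures: on the left, $H_m(A)\supseteq A$ is a union of $|G_m(A)|$ cubes of diameter $\sqrt n/m$, so covering each such cube by itself gives $\cH^{n-1}_{\sqrt n/m}(A)\le |G_m(A)|\cdot\alpha_{n-1}\cdot(\sqrt n/m)^{n-1}=\alpha_{n-1}n^{(n-1)/2}\cdot|G_m(A)|/m^{n-1}$; letting $m\to\infty$ through this family of bounds yields $\cH^{n-1}(A)\le D\cdot\limsup_{m}|G_m(A)|/m^{n-1}$ with $D=\alpha_{n-1}n^{(n-1)/2}$. On the right, $\pi_i(G_m(A))\subseteq G_m^{(n-1)}(\pi_i(A))\subseteq G_m^{(n-1)}(A_i)$ (the $(n-1)$-dimensional version of $G_m$), because deleting the $i$-th coordinate of a cube hitting $A$ gives an $(n-1)$-cube hitting $\pi_i(A)$; since $A_i$ is closed, formula~\eqref{eq:closed} applied in $\RR^{n-1}$ gives $|\pi_i(G_m(A))|/m^{n-1}\le |G_m^{(n-1)}(A_i)|/m^{n-1}\to\cH^{n-1}(A_i)$.

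Putting the two comparisons together: dividing the discrete inequality by $m^{n-1}$ and taking $\limsup_{m\to\infty}$, the left side is at least $\cH^{n-1}(A)/D$ and the right side converges to $\sum_{i=1}^n\cH^{n-1}(A_i)$, which is exactly the claimed estimate $\cH^{n-1}(A)\le D\sum_i\cH^{n-1}(A_i)$. The main obstacle, and the step deserving the most care, is the passage from the discrete count to $\cH^{n-1}$: one needs the elementary but crucial fact that a cube of side $1/m$ has diameter $\sqrt n/m$, so its contribution to the $(n-1)$-dimensional Hausdorff pre-measure is $\alpha_{n-1}(\sqrt n/m)^{n-1}$, and this is where both the factor $n^{(n-1)/2}$ and the normalisation constant $\alpha_{n-1}$ in $D$ come from --- this is also precisely the ``loss of a dimension-dependent constant'' flagged in the overview. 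A secondary technical point is to make sure the limiting arguments on the two sides are compatible (one wants $\cH^{n-1}(A)/D\le\liminf$ on one side and $\le\lim$ on the other, which works because the right-hand limits genuinely exist by~\eqref{eq:closed} while on the left we only need an upper bound on $\cH^{n-1}(A)$). No finer geometric-measure-theoretic input is needed here; the subtler arguments are reserved for Lemmas~\ref{lem:gmt} and~\ref{lem:decomp}.
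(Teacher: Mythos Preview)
Your proof of Lemma~\ref{lem:Dconv} is correct and follows essentially the same route as the paper: show that $G_m(A)$ is a weak antichain in $\ZZ^n$, apply Theorem~\ref{thm:discrete}, bound the left side via the cube covering $\cH^{n-1}_{\sqrt n/m}(A)\le \alpha_{n-1}(\sqrt n/m)^{n-1}|G_m(A)|$, bound the right side via $\pi_i(G_m(A))\subseteq G_m(A_i)$ and use~\eqref{eq:closed} for the closed sets $A_i$, then pass to the limit. The only cosmetic difference is that the paper organises the limit as ``fix $\delta$, let $m\to\infty$, then $\delta\to 0$'', whereas you take a single $\limsup$ over $m$; both are fine. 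One small point: the cubes $C(\fd)$ in the paper are the half-open ones (the macro $\olC$ expands to $C$), so your verification that $G_m(A)$ is a weak antichain goes through cleanly without the boundary worry you flag --- with genuinely closed cubes one would only get $\fx\le\fx'$ rather than $\fx\leftY\fx'$ when $\fd'=\fd+(1,\dots,1)$. Your opening paragraph about deducing Lemma~\ref{lem:D} from Lemma~\ref{lem:Dconv} is tangential to the statement at hand; the paper does this by covering each $\pi_i(A)$ by countably many closed sets and using Fact~\ref{fact:haus-mono}, not via closures of the projections.
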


\begin{proof}
	Fix $\delta>0$, consider an arbitrary positive integer $m\ge \sqrt{n}/\delta$, 
	and set $B=G_m(A)$. The covering $A\subseteq \bigcup_{\mathbf{d} \in B} \olC(\mathbf{d})$ 
	of $A$ uses $|B|$ cubes of diameter $\sqrt{n}/m\le \delta$ and thus we have 
	\[
		\cH^{n-1}_{\delta}(A) 
		\le
		\alpha_{n-1} \cdot |B| \cdot \left(\frac{\sqrt{n}}{m}\right)^{n-1}
		\overset{\text{\eqref{eq:D-intro}}}{=}
		\frac{D|B|}{m^{n-1}}\,.
	\]

	Since $A$ is a weak antichain in $[0, 1]^n$, the set $B$ is a weak antichain in $\ZZ^n$
	and Theorem~\ref{thm:discrete} discloses
	\[
		|B| \le \sum_{i=1}^{n}|\pi_i(B)|\,.
	\]
	Obviously, $\pi_i(B)\subseteq G_m(A_i)$ for every $i\in [n]$ (where the operator 
	$G_m$ is applied to the $(n-1)$-dimensional unit cube). Hence 
	\[
		\cH^{n-1}_{\delta}(A) \le
		D\cdot\sum_{i=1}^{n} \frac{|\pi_i(B)|}{m^{n-1}} 
		\le D\cdot\sum_{i=1}^{n} \frac{|G_m(A_i)|}{m^{n-1}}
	\]
	and by~\eqref{eq:closed} we obtain in the limit $m\to\infty$ that 
	\[ 
		\cH^{n-1}_{\delta}(A) \le D\cdot \sum_{i=1}^{n}\cH^{n-1}(A_i)\,. 
	\]
	Finally $\delta\to 0$ yields the desired result. 
\end{proof}

Now a standard application of Fact~\ref{fact:haus-mono} leads to Lemma~\ref{lem:D}.

\begin{proof}[Proof of Lemma~\ref{lem:D}]
	Let $\varepsilon>0$ be arbitrary. 
	Pick for every $i\in [n]$ a sequence $(C_{i,k})_{k\ge 1}$ 
	of closed subsets of $[0,1]^{n-1}$  
	such that 
	\begin{equation} \label{eq:Cik}
		\sum_{k}\cH^{n-1}(C_{i,k})
		\le 
		\cH^{n-1}\bigl(\pi_i(A)\bigr) + \varepsilon 
	\end{equation}
	and $\pi_i(A)\subseteq \bigcup_k C_{i,k}$. 

	Now consider an arbitrary $\ell\ge 1$. Setting 
	\[
		A_{i}^{(\ell)} = \bigcup_{k\in [\ell]} C_{i,k}
	\]
	for every $i\in [n]$ and
	\[
		A^{(\ell)} = \bigcap_{i\in [n]}\pi_i^{-1}\bigl(A_{i}^{(\ell)}\bigr)
	\]
	we deduce from Lemma~\ref{lem:Dconv} that 
	\[
		\cH^{n-1}(A\cap A^{(\ell)})\le D\cdot\sum_{i=1}^{n} \cH^{n-1}\bigl(A_i^{(\ell)}\bigr)\,.
	\]
 	Using $A\subseteq \bigcup_{\ell\ge 1} A^{(\ell)}$, Fact~\ref{fact:haus-mono},
	and~\eqref{eq:Cik} we obtain in the limit $\ell\to\infty$ that 
	\[
		\cH^{n-1}(A) \le D\cdot\sum_{i=1}^{n} \cH^{n-1}\bigl(\pi_i(A)\bigr) + nD\varepsilon\,.
	\]
	As $\eps>0$ was arbitrary, this proves Lemma~\ref{lem:D}.
\end{proof}

\begin{corollary} \label{cor:N}
	If $A\subseteq [0, 1]^n$ is a weak antichain, then $\cH^{n-1}(A)\le Dn$ is finite
	and, consequently, $\cH^{n}(A)=0$. \hfill $\Box$
\end{corollary}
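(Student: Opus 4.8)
The plan is to read off both assertions directly from Lemma~\ref{lem:D} together with the elementary comparison between Hausdorff measures of different dimensions recalled in Section~\ref{sec:intro}. First I would apply Lemma~\ref{lem:D} to the weak antichain $A$, obtaining $\cH^{n-1}(A)\le D\cdot\sum_{i=1}^{n}\cH^{n-1}\bigl(\pi_i(A)\bigr)$ with the dimensional constant $D$ from~\eqref{eq:D-intro}. Since each $\pi_i(A)$ is contained in the $(n-1)$-dimensional unit cube and $\cH^{n-1}$ coincides on $\RR^{n-1}$ with $(n-1)$-dimensional Lebesgue outer measure, we have $\cH^{n-1}\bigl(\pi_i(A)\bigr)\le\cH^{n-1}\bigl([0,1]^{n-1}\bigr)=1$ for every $i\in[n]$. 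Summing over $i$ gives $\cH^{n-1}(A)\le Dn$, so in particular $\cH^{n-1}(A)$ is finite.

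For the statement $\cH^{n}(A)=0$ I would argue directly from the definition of Hausdorff measure. If $\{U_i\colon i\in\NN\}$ is any cover of $A$ with $\diam(U_i)\le\delta$ for all $i$, then $\sum_{i}\diam(U_i)^{n}\le\delta\sum_{i}\diam(U_i)^{n-1}$, whence $\cH_{\delta}^{n}(A)\le\frac{\alpha_n}{\alpha_{n-1}}\,\delta\cdot\cH_{\delta}^{n-1}(A)\le\frac{\alpha_n}{\alpha_{n-1}}\,Dn\,\delta$, using $\cH_{\delta}^{n-1}(A)\le\cH^{n-1}(A)\le Dn$ in the last step. Letting $\delta\to0$ yields $\cH^{n}(A)=0$. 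Equivalently, the finiteness of $\cH^{n-1}(A)$ forces $\dim_H(A)\le n-1$, and then $\cH^{n}(A)=0$ because $n>\dim_H(A)$.

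There is no genuine obstacle here: the corollary is an immediate consequence of Lemma~\ref{lem:D} and the fact that the unit cube has finite measure in the relevant dimension, followed by the routine observation that a set of finite $\cH^{n-1}$-measure is $\cH^{n}$-null. The only point deserving a line of justification is this last implication, which the $\delta$-level estimate above settles cleanly without invoking the notion of Hausdorff dimension at all.
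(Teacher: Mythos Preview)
Your proposal is correct and matches the paper's intent exactly: the paper gives no explicit proof (just the $\Box$), treating the corollary as immediate from Lemma~\ref{lem:D} together with $\cH^{n-1}\bigl([0,1]^{n-1}\bigr)=1$ and the standard fact that finite $\cH^{n-1}$-measure forces $\cH^{n}$-measure zero. Your $\delta$-level computation for the second assertion is a clean way to spell out that last step.
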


\subsection{The decomposition Lemma} 
\label{subsec:decomp}

Let an $n$-dimensional antichain $A\subseteq [0,1]^n$ be given, where $n\ge 2$. 
Since no two points in $A$ can agree 
in their first $n-1$ coordinates, there exists a function 
$f_A\colon \pi_n(A)\longrightarrow [0,1]$ such that
\[
	A=\bigl\{ (\fx, f_A(\fx))\colon \fx\in\pi_n(A) \bigr\}\,.
\]

The fact that $A$ is indeed an antichain is equivalent to $f(\fx)>f(\fy)$
whenever $\fx < \fy$ are in~$\pi_n(A)$. It is often convenient to extend 
this function $f_A$ in a monotonicity preserving way to the whole $(n-1)$-dimensional 
unit cube. To this end one defines $\fh_A\colon [0,1]^{n-1}\longrightarrow [0,1]$ by 
\begin{equation}\label{eq:fahut}
	\fh_A(\fx) = \inf \bigl\{f_A(\fa)\colon \fa\in\pi_n(A) \text{ and } \fa\le \fx\bigr\}
\end{equation}
for all $\fx\in [0, 1]^{n-1}$, where, in this context, $\inf(\emptyset) = 1$.
By the aforementioned fact on $f_A$ we have $\fh_A(\fx)=f_A(\fx)$ for all
$\fx\in\pi_n(A)$. Moreover, if $\fx\le\fy$ are in $[0, 1]^{n-1}$, 
then $f(\fx)\ge f(\fy)$. We shall refer to $\fh_A$ as the \emph{function 
associated with the antichain} $A$.

More generally, we call a function $f\colon [0,1]^{n-1}\longrightarrow [0,1]$ 
\emph{order-reversing},
if we have $f(\fx)\ge f(\fy)$ whenever $\fx\le \fy$. So for instance the function $\fh_A$
associated with an antichain~$A$ has just been observed to be order-reversing. We will need
the following properties of such functions proved in~\cite{Chabrillac_Crouzeix}.

\begin{lemma}
\label{lem:CC}
	If $f\colon[0,1]^{n-1}\longrightarrow [0,1]$ is order-reversing, then it is measurable 
	in the sense that preimages of Borel sets are Lebesgue measurable. 
	Moreover, $f$ is almost everywhere differentiable. 
\end{lemma}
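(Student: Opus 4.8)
Write $d=n-1$; I will treat measurability and differentiability separately, proving the measurability statement by induction on $d$ and handling the differentiability statement by a slicing argument reinforced by a result from geometric measure theory.

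\textbf{Measurability.} Since the rays $[t,\infty)$ generate the Borel $\sigma$-algebra of $\RR$ and formation of preimages commutes with countable Boolean operations, it suffices to show that each superlevel set $\{f\ge t\}$ is Lebesgue measurable. As $f$ is order-reversing, $L:=\{f\ge t\}$ is a \emph{lower set}, i.e.\ $\fy\in L$ and $\fx\le \fy$ force $\fx\in L$; so the plan is to prove by induction on $d\ge 1$ that every lower set $L\subseteq[0,1]^d$ is Lebesgue measurable. For $d=1$ a lower set is an interval $[0,a)$ or $[0,a]$. For $d\ge 2$ I write points of $[0,1]^d$ as $(\fz,s)$ with $\fz\in[0,1]^{d-1}$, $s\in[0,1]$, and set $e(\fz)=\sup\{s\colon(\fz,s)\in L\}$ (with $\sup\varnothing=0$); since $L$ is a lower set, $e$ is order-reversing on $[0,1]^{d-1}$, hence by the already–established first sentence applied in dimension $d-1$ its superlevel sets are Lebesgue measurable, so $e$ is a measurable function. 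Then the open subgraph $U=\{(\fz,s)\colon s<e(\fz)\}=\bigcup_{q\in\QQ\cap(0,1)}\{e>q\}\times[0,q)$ is measurable, the graph of $e$ is $\cH^{d}$-null by Fubini (fibrewise a single point), and each fibre of $L$ is a lower subset of $[0,1]$, so $U\subseteq L\subseteq U\cup\mathrm{graph}(e)$. Hence $L$ is Lebesgue measurable, which closes the induction.

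\textbf{Almost everywhere differentiability.} Reflecting each coordinate, I may assume $f$ is order-\emph{preserving}. First a warm-up: $f$ is continuous off an $\cH^{d}$-null set. Indeed, along any line $\ell$ in the all-ones direction $f|_\ell$ is monotone, hence has at most countably many jumps; one checks that the monotone one-sided limits $f^{\pm}(\fz)$ of $f$ in the product order agree with those of $f|_\ell$ (squeeze an arbitrary $\fy$ between $\fz+\|\fy-\fz\|_\infty\mathbf 1$ and $\fz-\|\fy-\fz\|_\infty\mathbf 1$), and that $f^-(\fz)=f^+(\fz)$ forces continuity of $f$ at $\fz$ (squeeze $f(\fy)$ between $f(\fy\wedge\fz)$ and $f(\fy\vee\fz)$). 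So $\{f^-<f^+\}$ meets every such line in a countable set and is $\cH^{d}$-null by Fubini in a rotated frame. Next, by Fubini together with Lebesgue's one-variable differentiation theorem, every partial derivative $D_if$ exists $\cH^{d}$-a.e.; and since each one-variable slice of $f$ is monotone, an integration by parts shows the distributional derivative $D_if$ is a non-negative Radon measure of bounded total mass, so $f\in\mathrm{BV}$ on the open cube. By the structure theory of $\mathrm{BV}$ functions (see e.g.~\cite{Evans_Gariepy}) $f$ is then \emph{approximately} differentiable $\cH^{d}$-a.e.

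\textbf{Main obstacle.} The crux is upgrading approximate differentiability to genuine differentiability $\cH^{d}$-a.e.; here the coordinatewise monotonicity (equivalently, $\mathrm{BV}$ together with the sign of each $D_if$), rather than mere $\mathrm{BV}$, must be used to control the increment $f(\fy)-f(\fz)$ uniformly over all directions — the natural device being the $d$-step staircase from $\fz$ to $\fy$, each step of which is a one-dimensional monotone increment. A clean way to organise this is via Stepanov's differentiability theorem, so it would suffice to show that for $\cH^{d}$-a.e.\ $\fz$ the difference quotients of $f$ remain bounded near $\fz$; this, and the whole upgrade, is exactly what is carried out in~\cite{Chabrillac_Crouzeix}, and it is the step I expect to be technically the most delicate. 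With it in hand the lemma follows.
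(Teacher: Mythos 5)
Your attempt should be measured against the fact that the paper does not actually prove Lemma~\ref{lem:CC}: it is quoted directly from \cite{Chabrillac_Crouzeix}. Relative to that, your measurability argument is a genuine addition and is correct: superlevel sets of an order-reversing function are lower sets, and your induction on the dimension (a lower set $L$ is sandwiched between the open subgraph of the order-reversing function $e(\fz)=\sup\{s\colon(\fz,s)\in L\}$ and that subgraph together with the graph of $e$, which is a null set once $e$ is known measurable by the induction hypothesis) is an elementary, self-contained proof of the first sentence, more informative than a citation. For the differentiability half, your chain (a.e.\ continuity via monotonicity along the all-ones direction, a.e.\ existence of partial derivatives by Fubini plus Lebesgue's one-variable theorem, bounded coordinatewise monotone functions are BV, hence approximately differentiable a.e.) is sound, and you correctly flag that the upgrade from approximate to genuine differentiability is exactly where monotonicity, not mere BV, must be used --- BV functions alone need not be differentiable a.e., so this step cannot be waved through. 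At that point you defer to \cite{Chabrillac_Crouzeix}, which is precisely the source the paper invokes for the entire lemma; so your write-up is no less complete than the paper's treatment, though it is not a self-contained proof either, since the Stepanov-type control of difference quotients near almost every point is asserted rather than established. In short: correct as far as it goes, more self-contained than the paper on measurability, and coinciding with the paper in outsourcing the delicate differentiability step to the same reference.
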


Let us record an easy consequence. 

\begin{fact} \label{fact:L}
	Given an antichain $A\subseteq [0, 1]$ with associated function 
	$\fh_A\colon [0,1]^{n-1}\longrightarrow [0,1]$ and $c\in [0, 1]$, the set 
	\[
		L =\bigl\{ (x_1,\ldots,x_{n-1})\in [0,1]^{n-1}\colon
			 x_{n-1}<\fh_A(x_1,\ldots,x_{n-2},c)\bigr\}
	\]
	is Lebesgue measurable.
\end{fact}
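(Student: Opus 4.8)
The plan is to deduce the measurability of $L$ directly from Lemma~\ref{lem:CC} by exhibiting $L$ as a preimage of a Borel set under a single order-reversing function of $n-1$ variables. First I would fix $c\in[0,1]$ and define $g\colon[0,1]^{n-1}\longrightarrow[0,1]$ by
\[
	g(x_1,\ldots,x_{n-1}) = \fh_A(x_1,\ldots,x_{n-2},c)\,,
\]
that is, $g$ is the composition of $\fh_A$ with the affine map $(x_1,\ldots,x_{n-1})\mapsto(x_1,\ldots,x_{n-2},c)$. Since $\fh_A$ is order-reversing on $[0,1]^{n-1}$ and this substitution preserves the coordinate-wise order (it only inserts a constant coordinate and forgets $x_{n-1}$), the function $g$ is again order-reversing in the sense of the paragraph preceding Lemma~\ref{lem:CC}. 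Hence Lemma~\ref{lem:CC} applies to $g$, and preimages of Borel subsets of $[0,1]$ under $g$ are Lebesgue measurable.

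Next I would express $L$ using $g$. By definition $L=\{\fx\in[0,1]^{n-1}\colon x_{n-1}<g(\fx)\}$, so it is natural to introduce the auxiliary function $\Phi\colon[0,1]^{n-1}\longrightarrow\RR$ given by $\Phi(\fx)=g(\fx)-x_{n-1}$, whence $L=\Phi^{-1}\bigl((0,\infty)\bigr)$. The map $\fx\mapsto x_{n-1}$ is continuous, hence Lebesgue measurable, and $g$ is Lebesgue measurable by the previous step; the difference of two Lebesgue-measurable real-valued functions is Lebesgue measurable, so $\Phi$ is Lebesgue measurable. Since $(0,\infty)$ is a Borel set, $L=\Phi^{-1}\bigl((0,\infty)\bigr)$ is Lebesgue measurable, as claimed.

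The only point that requires a moment's care — and what I would regard as the main (admittedly minor) obstacle — is verifying that $g$ really is order-reversing, i.e. that inserting the fixed coordinate $c$ in the $(n-1)$st slot and suppressing the genuine $(n-1)$st variable does not break monotonicity. Concretely, if $(x_1,\ldots,x_{n-1})\le(y_1,\ldots,y_{n-1})$ in $[0,1]^{n-1}$, then $(x_1,\ldots,x_{n-2},c)\le(y_1,\ldots,y_{n-2},c)$ in $[0,1]^{n-1}$ as well, so $g(\fx)=\fh_A(x_1,\ldots,x_{n-2},c)\ge\fh_A(y_1,\ldots,y_{n-2},c)=g(\fy)$ because $\fh_A$ is order-reversing. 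This settles the hypothesis of Lemma~\ref{lem:CC}, and the rest of the argument is the routine measurability bookkeeping described above; one could alternatively phrase it as noting that $g$ is order-reversing hence Lebesgue measurable, the coordinate projection is Borel, and $L$ lies in the $\sigma$-algebra generated by these, which is contained in the Lebesgue $\sigma$-algebra.
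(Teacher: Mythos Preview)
Your proof is correct. Both your argument and the paper's rest on Lemma~\ref{lem:CC}, but you apply it to a different function: you show that $g(\fx)=\fh_A(x_1,\ldots,x_{n-2},c)$ is order-reversing and then reach $L$ via the measurable difference $\Phi=g-x_{n-1}$, whereas the paper applies Lemma~\ref{lem:CC} directly to the characteristic function $\mathbf{1}_L$, verifying that $\mathbf{1}_L$ itself is order-reversing on $[0,1]^{n-1}$. The paper's route is a step shorter (no auxiliary $\Phi$, no appeal to closure of measurable functions under subtraction), while your route is the more conventional first attempt and makes the ``freeze one coordinate'' structure of $L$ explicit; both are entirely valid.
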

	
\begin{proof}
	By Lemma~\ref{lem:CC} it suffices to check that the 
	characteristic function $\mathbf{1}_L$ of $L$ is order-reversing. 
	So let $(\fx,x_{n-1}) \le (\fy,y_{n-1})$ be given, where $\fx,\fy\in [0,1]^{n-2}$.
	We need to prove that
	\[
		\mathbf{1}_L(\fx,x_{n-1}) \ge \mathbf{1}_L(\fy,y_{n-1})\,.
	\]
	If $(\fy,y_{n-1})\notin L$ this is clear, so we may suppose that $(\fy,y_{n-1})\in L$. 
	Since $\fh_A$ is order-reversing, it follows that 
	\[ 
		x_{n-1}\le y_{n-1} < \fh_A(\fy,c)\le \fh_A(\fx,c)\, ,
	\]
	which in turn implies that $(\fx,x_{n-1}) \in L$.  
\end{proof}	

Now we are ready for the proof of Lemma~\ref{lem:decomp}, which will occupy the remainder of 
this subsection.  

\begin{proof}[Proof of Lemma~\ref{lem:decomp}]
	For $i=n$ this follows from the Lebesgue measurability of the 
	set $B'$ and from the fact that in $\RR^{n-1}$ the $(n-1)$-dimensional 
	Hausdorff outer  measure coincides with the $(n-1)$-dimensional Lebesgue outer measure.
	So without loss of generality we may henceforth assume that $i=n-1$. 
	
	Define the set-function $\nu\colon\powerset([0,1]^{n-1})\longrightarrow [0,1]$ by setting 
	\[
		\nu(E) = \cH^{n-1}\bigl(\pi_{n-1}(\pi_n^{-1}(E)\cap A)\bigr)
	\]
	for every $E \subseteq [0,1]^{n-1}$. 
	In other words, if $\fh_A\colon[0,1]^{n-1}\longrightarrow [0,1]$ is the function associated 
	with the antichain $A$, then $\nu(E)=\cH^{n-1}(F_E)$, where 
	\[
		F_E = \bigl\{(x_1,\ldots,x_{n-2}, \fh_A(x_1,\ldots,x_{n-1}))\colon 
			(x_1,\ldots,x_{n-1})\in E\cap \pi_n(A) \bigr\}\,.
	\]
	Notice that $\nu$ is an outer measure. We will show later that $B'$ is $\nu$-measurable. 
	This will imply that
	\[
		\nu\bigl(\pi_n(A)\bigr) 
		= 
		\nu\bigl(\pi_n(A)\cap B'\bigr) + \nu\bigl(\pi_n(A)\cap B''\bigr)\,,
	\]
	which is equivalent to 
	\[ 
		\cH^{n-1}\bigl(\pi_{n-1}(A)\bigr) 
		= 
		\cH^{n-1}\bigl(\pi_{n-1}(A')\bigr) + \cH^{n-1}\bigl(\pi_{n-1}(A'')\bigr)\,, 
	\]
	and the result will follow. 

	Thus it remains to show that all Borel sets $B'$ are $\nu$-measurable. 
	It is well known that the sigma algebra of Borel subsets of $[0, 1]^{n-1}$
	is generated by the closed half-spaces bounded by hyperplanes which are 
	orthogonal to the coordinate axes. Therefore it suffices to establish that for 
	all $c\in[0,1]$ and $i\in[n-1]$ the set 
	\[
		B_i(c)=\bigl\{(x_1,\dots,x_{n-1})\colon x_i\leq c\bigr\}  
	\]
	is $\nu$-measurable. This means that for each test set $E\subseteq [0,1]^{n-1}$ 
	we need to prove 
	(see \cite{Bogachev}*{Proposition 1.5.11}) that
	\begin{equation}\label{eq:Bic}
		\nu(E) = \nu\bigl(E\cap B_i(c)\bigr) + \nu\bigl(E\setminus B_i(c)\bigr) \,. 
	\end{equation}

	In case $i\in [n-2]$ this rewrites as 
	\[
		\cH^{n-1}(F_E)
		=
		\cH^{n-1}\bigl(F_E\cap B_i(c)\bigr)+\cH^{n-1}\bigl(F_E\setminus B_i(c)\bigr)
	\]
	and follows from the measurability of $B_i(c)$. Thus we may suppose $i=n-1$
	from now on. 
	
	Setting
	\begin{align*}
		F_E^{-} &= \bigl\{ (x_1,\ldots,x_{n-2}, \fh_A(x_1,\ldots,x_{n-1}))
		\colon(x_1,\ldots,x_{n-1})\in E\cap \pi_n(A) \text{ and } x_{n-1} \le c\bigr\}\,,\\
		\intertext{ and }
		F_E^{+}&= \bigl\{(x_1,\ldots,x_{n-2}, \fh_A(x_1,\ldots,x_{n-1}))\colon
		 (x_1,\ldots,x_{n-1})\in E\cap \pi_n(A) \text{ and } x_{n-1} > c\bigr\}
	\end{align*}
	we can reformulate~\eqref{eq:Bic} as 
	\begin{equation}\label{eq:LL}
		\cH^{n-1}(F_E) = \cH^{n-1}(F_E^{-}) + \cH^{n-1}(F_E^{+})\,.
	\end{equation}

	Now by Fact~\ref{fact:L} the set
	\[
		L =\bigl\{ (x_1,\ldots,x_{n-1})\in [0,1]^{n-1}\colon
			 x_{n-1}<\fh_A(x_1,\ldots,x_{n-2},c)\bigr\}
	\]
	is Lebesgue measurable, whence  
	\begin{equation}\label{eq:L-mes}
		\cH^{n-1}(F_E) = \cH^{n-1}(F_E\setminus L) + \cH^{n-1}(F_E\cap L)\,.
	\end{equation}
	Moreover, the set 
	\[
		N = \bigl\{(x_1,\ldots,x_{n-2}, \fh_A(x_1,\ldots,x_{n-2},c))
			\colon x_1,\ldots,x_{n-2}\in [0,1]\bigr\}
	\]
	is a weak antichain in $[0,1]^{n-1}$, so by Corollary~\ref{cor:N} we have $\cH^{n-1}(N)=0$. 
	
	Together with the inclusions
	\begin{align*}
	(F_E\setminus L)\setminus N &\subseteq F_E^{-} \subseteq F_E\setminus L\\
	\text{and } \qquad
	F_E \cap L &\subseteq F_E^{+} \subseteq (F_E\cap L)\cup N\,,
	\end{align*} 
	which follow from the fact that $\fh_A$ is order-reversing, this shows
	\[
		\cH^{n-1}(F_E^{-}) = \cH^{n-1}(F_E\setminus L)
		\quad\text{ and }\quad
		\cH^{n-1}(F_E^{+}) = \cH^{n-1}(F_E\cap L)\,.
	\]
	Therefore~\eqref{eq:L-mes} implies~\eqref{eq:LL}.	
\end{proof}

\subsection{Proof of Lemma~\ref{lem:gmt}}
\label{subsec:gmt}

There are two issues that need to be addressed when transferring the proof of the 
projection inequality for smooth antichains sketched in Subsection~\ref{subsec:outline}
to the general case. Starting with the representation~\eqref{eq:Bf} of a given 
antichain $A$ with an order-reversing function $f$, we need to deal with the fact that $B$
may fail to be measurable and, moreover, with the possible non-differentiability of $f$. 
It turns out that these two points can be handled separately from each other and we start 
by giving an argument that applies to the case where $f$ is linear and $B$ may be arbitrary.
 
\begin{lemma}\label{lem:linear}
	If $L\colon \RR^{n-1}\longrightarrow \RR$ is a linear function,  $B\subseteq [0, 1]^{n-1}$
	is arbitrary and 
	\[
		S=\bigl\{(\fx, L(\fx))\colon \fx\in B\bigr\}\,,
	\]
	then
	\[
		\cH^{n-1}(S)
		\le 
		\cH^{n-1}(B)+ \sum_{i=1}^{n-1}\mathcal H^{n-1}\bigl(\pi_i(S)\bigl)\,.
	\]
\end{lemma}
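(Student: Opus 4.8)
The plan is to reduce the general set $B$ to a measurable one by an outer-measure approximation, and then compute the Hausdorff measure of the graph over a measurable set via the area formula for linear maps. Write $L(\fx) = \langle \fa, \fx\rangle$ for some $\fa = (a_1,\dots,a_{n-1}) \in \RR^{n-1}$, and let $\Phi\colon \RR^{n-1}\to\RR^n$ be the affine map $\Phi(\fx) = (\fx, L(\fx))$, so that $S = \Phi(B)$. The map $\Phi$ is injective and Lipschitz, and for any Lebesgue measurable $E\subseteq\RR^{n-1}$ the area formula gives $\cH^{n-1}(\Phi(E)) = J\cdot\cH^{n-1}(E)$, where the Jacobian factor is $J = \sqrt{\det(\mathrm{d}\Phi^{\mathsf T}\mathrm{d}\Phi)} = \sqrt{1 + a_1^2 + \dots + a_{n-1}^2}$. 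Likewise $\pi_i\circ\Phi$ is an affine map whose linear part has, for $i\in[n-1]$, singular value set giving Jacobian $|a_i|$ (it deletes the $i$-th coordinate but keeps the last coordinate $\langle\fa,\fx\rangle$, so on the relevant $(n-1)$-plane it scales $(n-1)$-volume by $|a_i|$). Hence for measurable $E$ we have $\cH^{n-1}(\pi_i(\Phi(E))) = |a_i|\cdot\cH^{n-1}(E)$, at least after discarding the measure-zero set where $\pi_i\circ\Phi$ degenerates. The pointwise inequality $\sqrt{1 + a_1^2 + \dots + a_{n-1}^2} \le 1 + |a_1| + \dots + |a_{n-1}|$ then yields the claim for measurable $B$.

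To handle arbitrary $B$, I would use the fact that $(n-1)$-dimensional Hausdorff outer measure on $\RR^{n-1}$ is Borel-regular: there is a Borel (indeed $G_\delta$) set $\widehat B \supseteq B$ with $\cH^{n-1}(\widehat B) = \cH^{n-1}(B)$. The subtlety is that we want to control $\cH^{n-1}(S) = \cH^{n-1}(\Phi(B))$ from above, and here we can simply use monotonicity together with the measurable case applied to $\widehat B$: since $\Phi$ is injective we get
\[
	\cH^{n-1}(S) = \cH^{n-1}(\Phi(B)) \le \cH^{n-1}(\Phi(\widehat B)) = J\cdot\cH^{n-1}(\widehat B) = J\cdot\cH^{n-1}(B)\,.
\]
For the projection terms on the right-hand side we need lower bounds, and monotonicity goes the wrong way for that; but here we do not need equality — we only need $\cH^{n-1}(\pi_i(S)) \ge \cH^{n-1}(\pi_i(\Phi(B)))$, which is trivially an equality, so the combination
\[
	\cH^{n-1}(S) \le J\cdot\cH^{n-1}(B) \le \bigl(1 + \textstyle\sum_{i=1}^{n-1}|a_i|\bigr)\cH^{n-1}(B)
\]
together with $|a_i|\cdot\cH^{n-1}(B)$ relating to $\cH^{n-1}(\pi_i(S))$ is what must be made to match up. The cleanest route is: bound $\cH^{n-1}(S)$ above using $\widehat B$ as just shown, and separately observe that $\pi_i(S) = \pi_i(\Phi(B)) = \{(\fx', \langle\fa,\fx\rangle) : \fx\in B\}$ where $\fx'$ deletes coordinate $i$; for $a_i\neq 0$ this set is (bi-Lipschitz-equivalent via a shear to) a graph over the full $B$ of a linear function with Jacobian exactly $|a_i|$, so $\cH^{n-1}(\pi_i(S)) = |a_i|\cdot\cH^{n-1}(B)$ holds for arbitrary $B$ because the shear is a bijection and $\cH^{n-1}$ transforms by the constant factor $|a_i|$ under it — a linear bijection multiplies $\cH^{n-1}$ by a fixed constant regardless of measurability. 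Summing gives the result.

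The main obstacle I expect is bookkeeping around the degenerate coordinates: if some $a_i = 0$ then $\pi_i(S)$ is just a translate of $B\times\{0\}$ inside a coordinate hyperplane and contributes $\cH^{n-1}(\pi_i(S)) = \cH^{n-1}(B)$, which only helps, so this case is harmless but must be mentioned; and the claim that a linear (or affine) bijection of $\RR^n$ scales $k$-dimensional Hausdorff measure by a constant factor depending only on the map — valid for \emph{all} subsets, not merely measurable ones — is the key fact that lets the non-measurable $B$ pass through. I would cite this from the standard references (\cite{Mattila}, \cite{Evans_Gariepy}) rather than reprove it. Everything else is the elementary inequality $\sqrt{1+t_1^2+\dots+t_{n-1}^2}\le 1+|t_1|+\dots+|t_{n-1}|$, which follows by squaring.
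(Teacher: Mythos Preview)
Your approach is essentially the paper's: express $\cH^{n-1}(S)$ and each $\cH^{n-1}(\pi_i(S))$ as constant multiples of $\cH^{n-1}(B)$ via the linear scaling of Hausdorff measure, and then apply $\sqrt{1+\sum a_i^2}\le 1+\sum|a_i|$. Two remarks. First, the Borel-regularity detour is unnecessary: the scaling identity $\cH^{n-1}(T(E))=|\det T|\,\cH^{n-1}(E)$ for a linear bijection $T$ of $\RR^{n-1}$ holds for \emph{arbitrary} $E$ (as you yourself observe near the end), because $\cH^{n-1}$ agrees with Lebesgue outer measure on $\RR^{n-1}$; the paper simply invokes this directly. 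Second, your description of the degenerate case is wrong: when $a_i=0$ the map $\pi_i\circ\Phi$ drops the variable $x_i$ entirely, so $\pi_i(S)$ lies in an $(n-2)$-dimensional affine subspace and $\cH^{n-1}(\pi_i(S))=0$, not $\cH^{n-1}(B)$. This slip is harmless for the inequality, since you only need $|a_i|\,\cH^{n-1}(B)\le\cH^{n-1}(\pi_i(S))$ and both sides vanish, but the geometry should be stated correctly.
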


\begin{proof}
Let $L$ be given by 
\[
	(x_1, \ldots, x_{n-1})\longmapsto \sum_{i=1}^{n-1} c_ix_i\,.
\]

For $i\in[n-1]$ the map: $B \longrightarrow \pi_i(S)$ given by
\[
	(x_1, \ldots, x_{n-1})\longmapsto (x_1, \ldots, x_{i-1}, x_{i+1}, \ldots, x_{n-1}, 
		c_1x_1+\ldots+c_{n-1}x_{n-1})
\]
and the map: $B \longrightarrow S$ given by
\[
	(x_1, \ldots, x_{n-1})\longmapsto (x_1, \ldots, x_{n-1}, 
		c_1x_1+\ldots+c_{n-1}x_{n-1})
\]
are linear and surjective. Since 
the $(n-1)$-dimensional
Hausdorff measure of $B$ agrees with the $(n-1)$-dimensional Lebesgue outer measure of $B$,
we have (see e.g.~\cite{Evans_Gariepy}*{p.114})
$H^{n-1}\bigl(\pi_i(S)\bigl)=|c_i|\cdot \cH^{n-1}(B)$ and
$\cH^{n-1}(S)=\bigl(1+\sum_{i=1}^{n-1}c_i^2\bigr)^{1/2}\cH^{n-1}(B)$.
Thus it remains to remark 
\[
	\bigl(1+\sum_{i=1}^{n-1}c_i^2\bigr)^{1/2}
	\le 
	1+\sum_{i=1}^{n-1}|c_i|\,,
\]
which is clear. 	
\end{proof}

Recall that a function 
$f\colon F \subseteq \RR^n\longrightarrow\RR^m$ is 
\emph{Lipschitz with constant $K$} (or $K$-\emph{Lipschitz} for short) if 
\[ 
	\n{f(\fx)-f(\fy)} \leq K\cdot \n{\fx-\fy} \text{ for all }\fx,\fy\in F\,. 
\]
We use several times the following well-known result concerning the $s$-dimensional 
Hausdorff measure (see \cite{Falconer_1990}*{p.24}).

\begin{lemma}\label{lem:Lip}
	Let $m$ and $n$ be positive integers and let $F\subseteq \RR^n$. 
	If $f\colon F\longrightarrow\RR^m$ is a $K$-Lipschitz function, 
	then $\cH^s(f(F))\leq K^s \cdot\cH^s(F)$.
\end{lemma}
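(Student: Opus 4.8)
The plan is to unwind the definition of Hausdorff measure directly: any efficient cover of $F$ is pushed forward by $f$ to a cover of $f(F)$ whose mesh is inflated by at most the factor $K$, and the normalisation factor $\alpha_s$ cancels on both sides.

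First I would fix $\delta>0$ and let $\{U_i\colon i\in\NN\}$ be an arbitrary countable cover of $F$ with $\diam(U_i)\le\delta$ for all $i$; replacing each $U_i$ by $U_i\cap F$ (which only shrinks the diameters and keeps the cover) we may assume $U_i\subseteq F$, so that $f(U_i)$ makes sense. Then $\{f(U_i)\colon i\in\NN\}$ covers $f(F)$, and the $K$-Lipschitz hypothesis gives $\diam\bigl(f(U_i)\bigr)\le K\,\diam(U_i)\le K\delta$ for every $i$. Hence this is an admissible cover of $f(F)$ at scale $K\delta$, and the definition of $\cH^s_{K\delta}$ yields
\[
	\cH^s_{K\delta}\bigl(f(F)\bigr)
	\le \alpha_s\sum_{i\in\NN}\diam\bigl(f(U_i)\bigr)^s
	\le K^s\,\alpha_s\sum_{i\in\NN}\diam(U_i)^s\,.
\]
Taking the infimum over all such covers $\{U_i\}$ of $F$ gives $\cH^s_{K\delta}\bigl(f(F)\bigr)\le K^s\,\cH^s_\delta(F)$, and letting $\delta\to 0$ (so that $K\delta\to 0$ as well, and noting that if $K=0$ the map is constant, making the claim trivial for $s>0$) we conclude $\cH^s\bigl(f(F)\bigr)\le K^s\,\cH^s(F)$.

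There is essentially no obstacle here; this is a routine consequence of the definitions. The only points deserving a word of care are that $f$ is a priori defined only on $F$, which is why one intersects the covering sets with $F$, and the degenerate case $K=0$ (or $F=\varnothing$), in which $f(F)$ is a single point or empty and the inequality holds trivially.
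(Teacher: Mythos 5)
Your argument is correct: it is the standard push-forward-of-covers argument, and the paper itself gives no proof of this lemma, citing it instead as a well-known fact from Falconer's book. So your proposal coincides with the intended (referenced) proof, including the cancellation of the normalisation factor $\alpha_s$ and the harmless edge cases.
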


For the rest of this subsection we fix an antichain $A$ in $[0,1]^n$ and a positive real 
number~$\delta$ for which we would like to establish Lemma~\ref{lem:gmt}. Let $\fh_A$ be the 
function associated with~$A$ (see~\eqref{eq:fahut}). If for some $\fx\in(0, 1)^{n-1}$ 
and $i\in[n-1]$ the $i$-th partial derivative of~$\fh_A$ at~$\fx$ exists, we denote it 
by $D_i \fh_A(\fx)$. Furthermore, if a point~$\fx$ has the property that all partial 
derivatives $D_1 \fh_A(\fx), \ldots, D_{n-1} \fh_A(\fx)$ exist, we define 
$\Lx\colon\RR^{n-1}\longrightarrow \RR$ to be the linear form given by 
\[
	\Lx(v_1,\ldots, v_{n-1}) = \sum_{i=1}^{n-1} D_i \fh_A(\fx) v_i\,.
\]

The Borel set $B$ we need to exhibit will be a subset of a closed 
set $C\subseteq (0,1)^{n-1}$ on which $\fh_A$ has some useful differentiability 
properties collected in the lemma that follows. 

\goodbreak
  
\begin{lemma}\label{lem:C}
	There exists a closed set $C\subseteq (0,1)^{n-1}$ such that
	\begin{enumerate}[label=\rmlabel]
		\item\label{it:C1} $\cH^{n-1}(C) > 1-\delta/2$;
		\item\label{it:C2} all partial derivatives of $\fh_A$ exist and are continuous on $C$;
		\item\label{it:C3} for every $\fx\in C$, the function $\fh_A$ is differentiable 
			at $\fx$ with the derivative $\Lx$;
		\item\label{it:C4} the differentiability of $\fh_A$ is uniform on $C$, i.e., for 
			every $\eta>0$ there exists an~$\varepsilon>0$ such that for
			all $\fa, \fx\in C$ with $\n{\fa-\fx}<\varepsilon$ we have 
			\[
				\big|\fh_A(\fa)- \fh_A(\fx)- \Lx(\fa-\fx)\big| 
				\le 
				\eta \n{\fa-\fx}\,. 
			\]
\end{enumerate}
\end{lemma}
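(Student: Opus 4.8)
The plan is to build the set $C$ by successively discarding ``bad'' parts of the cube $(0,1)^{n-1}$, each time of small measure, so that the four properties hold on what remains. We start from the fact, recorded in Lemma~\ref{lem:CC}, that the order-reversing function $\fh_A$ is differentiable almost everywhere and that its partial derivatives are measurable. First I would invoke Lusin's theorem: since the partial derivatives $D_1\fh_A,\dots,D_{n-1}\fh_A$ are defined and finite almost everywhere and measurable on $(0,1)^{n-1}$, there is a closed set $C_0\subseteq(0,1)^{n-1}$ with $\cH^{n-1}\bigl((0,1)^{n-1}\setminus C_0\bigr)$ as small as we like on which all these partial derivatives \emph{and} the function $\fh_A$ itself are continuous (equivalently, restrict to the common domain of continuity). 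This secures~\ref{it:C2}. Simultaneously, by the almost-everywhere differentiability we may intersect with the full-measure set of points at which $\fh_A$ is (totally) differentiable; at such a point the derivative is necessarily the linear form built from the partials, i.e.\ it equals $\Lx$, which gives~\ref{it:C3}. Shrinking a little more keeps us inside $(0,1)^{n-1}$ and keeps $C_0$ closed, so after relabelling we have a closed set $C_1\subseteq(0,1)^{n-1}$ with $\cH^{n-1}(C_1)>1-\delta/4$ satisfying~\ref{it:C2} and~\ref{it:C3}.

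Next I would upgrade pointwise differentiability on $C_1$ to the \emph{uniform} differentiability demanded by~\ref{it:C4}. For each $k\in\NN$ and each point $\fx\in C_1$ there is a radius $r(\fx,k)>0$ such that $|\fh_A(\fa)-\fh_A(\fx)-\Lx(\fa-\fx)|\le \frac1k\n{\fa-\fx}$ for $\fa$ within $r(\fx,k)$ of $\fx$; the classical device is to set $E_{k,j}=\{\fx\in C_1\colon r(\fx,k)\ge 1/j\}$, observe $C_1=\bigcup_j E_{k,j}$ is an increasing union, and use Fact~\ref{fact:haus-mono} (with $k=n-1$ there) to pick $j(k)$ with $\cH^{n-1}\bigl(C_1\setminus E_{k,j(k)}\bigr)<\delta\cdot 2^{-k-3}$. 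Putting $C=\overline{\bigcap_{k\ge1}E_{k,j(k)}}$ — or more carefully, intersecting $C_1$ with the closure of this set and checking it stays inside the open cube — yields a closed set with $\cH^{n-1}(C)>1-\delta/2$, giving~\ref{it:C1}. On $C$, by construction, for every $\eta>0$ we may take $k>1/\eta$ and $\varepsilon=1/j(k)$: any two points of $C$ closer than $\varepsilon$ satisfy the estimate with the fixed linear form at the \emph{base} point, which is exactly~\ref{it:C4}. One subtlety: the estimate in~\ref{it:C4} is stated with $\Lx$, the derivative at $\fx$, even as $\fa$ ranges over $C$; this is precisely what the $E_{k,j}$ construction delivers, since the inequality defining $E_{k,j}$ already quantifies over nearby $\fa$, so no symmetrisation between $\fa$ and $\fx$ is needed.

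The main obstacle I anticipate is bookkeeping rather than conceptual: one must be careful that each shrinking step \emph{preserves closedness and the containment in the open cube}, and that the total measure removed stays below $\delta$. Taking intersections of closed sets is harmless, and a closed subset of $(0,1)^{n-1}$ that is bounded away from the boundary remains closed in $\RR^{n-1}$, so after a final intersection with a set of the form $[\,\varepsilon_0,1-\varepsilon_0\,]^{n-1}$ (again costing arbitrarily little measure) we land safely inside $(0,1)^{n-1}$. A second minor point worth flagging: to apply Fact~\ref{fact:haus-mono} we need the sets $E_{k,j}$ to be, say, Borel; this follows because $\fh_A$ restricted to the closed set $C_1$ is continuous, so the defining condition on $r(\fx,k)$ is a countable combination of closed conditions, hence each $E_{k,j}$ is closed in $C_1$ and thus Borel. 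With these precautions the four properties hold simultaneously on the final set $C$, completing the proof.
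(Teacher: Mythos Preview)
Your proposal is correct and follows essentially the same route as the paper: restrict to the full-measure set of differentiability supplied by Lemma~\ref{lem:CC}, apply Lusin's theorem to make the partial derivatives continuous on a closed set of nearly full measure, and then pass to a further closed subset on which the differentiability is uniform --- the paper does this last step by citing Egoroff's theorem directly, whereas your $E_{k,j}$ construction is precisely the standard proof of Egoroff. The bookkeeping worries you flag are all benign, and the paper sidesteps the need to re-close after intersecting with the differentiability set simply by reversing your order: it first takes the measurable differentiability set and then applies Lusin inside it, so the resulting set is closed from the outset.
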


\begin{proof}
	Since the function $\fh_A$ is order-reversing, Lemma~\ref{lem:CC} implies that it is 
	almost everywhere differentiable. Hence there exists a measurable set 
	$C_1\subseteq (0,1)^{n-1}$, whose measure equals $1$, such that for every $\fx\in C_1$ 
	all partial derivatives of $\fh_A$ exist and, moreover,  $\fh_A$ is differentiable 
	at $\fx$ with the derivative $\Lx$. So by choosing $C\subseteq C_1$ later, 
	we can ensure~\ref{it:C3} as well as the first part of \ref{it:C2}.
	
	Next, by Lusin's theorem (see e.g.~\cite{Bogachev}*{Theorem 2.2.10}), there exists 
	a closed set $C_2\subseteq C_1$ with $\cH^{n-1}(C_2)>1-\delta/4$ such that all 
	partial derivatives $D_i\fh_A$ exist and are continuous on $C_2$. So every $C\subseteq C_2$
	satisfies~\ref{it:C2} as well. 
	
	Now define for every $m\in\NN$ the measurable function 
	$g_m\colon C_2\longrightarrow \RR$ by
	\[
		g_m(\fx) 
		= 
		\sup\biggl\{\frac{| \fh_A(\fa)- \fh_A(\fx)- \Lx(\fa-\fx)|}{\n{\fa-\fx}}
		\colon \fa\in \bigl(\QQ\cap (0,1)\bigr)^{n-1}, \, 0<\n{\fa-\fx}<1/m   \biggr\}\,.
	\]
	Since $\lim_{m \to \infty} g_m(\fx) = 0$ holds for every $\fx\in C_2$, Egoroff's theorem 
	(see e.g.~\cite{Bogachev}*{Theorem~2.2.1}) implies that there exists a closed set 
	$C\subseteq C_2$ with $\cH^{n-1}(C)>1-\delta/2$ and such that $g_m\to 0$ holds 
	uniformly on $C$. Such a set has the properties~\ref{it:C1} and~\ref{it:C4} as well.
\end{proof}

Throughout the remainder of this subsection, $C$ denotes a set provided by the 
previous lemma. Set
\begin{equation}\label{eq:K}
	K=\left(1 + \frac{\delta}{n}\right)^{1/(2n-2)}
\end{equation}
and for $\fx\in (0,1)^{n-1}$ and $\varepsilon>0$ let
\[
	\Q = \bigl\{\fy\in \RR^{n-1}\colon \| \fx-\fy \|_{\infty}<\varepsilon\bigr\} 
\]
be the $\varepsilon$-cube around $\fx$. Next we intend to show for every 
$\fx\in C$, that if $\eps>0$ is sufficiently small, then the projection inequality holds
in an approximate form for $A\cap \pi_{n}^{-1}(C \cap \Q)$ instead of $A$. Once this 
is known, a Vitali covering argument will allow us to combine many such cubes, so that 
the desired set $B$ can be taken to be a disjoint union of several sets of the form $C\cap \Q$.
The definition that follows collects some properties of such cubes that will be 
useful for implementing this strategy.  

\begin{dfn}\label{dfn:nice}
	Given $\fx\in C$ and $\eps>0$ the $\varepsilon$-cube $Q=\Q$ is said to be \emph{nice} 
	if it has the following properties:
	\begin{enumerate}[label=\alabel]
	\item\label{it:N1} $Q\subseteq [0,1]^{n-1}$.
	\item\label{it:N2} If $\fa,\fb\in Q\cap C$, then
		\[ 
			\big|\fh_A(\fa)-\fh_A(\fb)\big|^2 
			\le 
			(K^2-1) \cdot\n{\fa-\fb}^2 + K^2\cdot \big|\Lx(\fa-\fb)\big|^2\,.
		\]
	\item\label{it:N3} If $\fa,\fb\in Q\cap C$, $i\in [n-1]$, and $D_i \fh_A(\fx)\neq 0$, 
		then
		\[
			\big|\Lx(\fa-\fb)\big|^2 
			\le 
			(K^2-1)\sum_{j\in [n-1]\setminus \{i\}} |a_j-b_j|^2 
				+ K^2\cdot \big|\fh_A(\fa)-\fh_A(\fb)\big|^2\,.
		\]
\end{enumerate}
\end{dfn}   

The following result shows that nice cubes determine parts of $A$, for which the projection 
inequality holds up to a multiplicative factor that is close to $1$. 

\begin{lemma}\label{lem:use-nice}
	If $Q=\Q$ is a nice cube and $A_Q =A\cap \pi_{n}^{-1}(Q\cap C)$, then 
	\[ 
		\cH^{n-1}(A_Q) \le K^{2(n-1)}\cdot \sum_{i=1}^{n} \cH^{n-1}\bigl(\pi_i(A_Q)\bigr)\,. 
	\]
\end{lemma}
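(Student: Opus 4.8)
The plan is to exhibit, for each $i\in[n]$, an explicit map from $A_Q$ onto $\pi_i(A_Q)$ that is $K$-Lipschitz after a harmless rescaling of coordinates, and then to invoke Lemma~\ref{lem:Lip} together with the discrete projection inequality in the form already used for $A$. Write $\fh=\fh_A$ and recall that $A_Q=\{(\fx,\fh(\fx))\colon \fx\in Q\cap C\cap\pi_n(A)\}$, so that $A_Q$ is a graph over the $(n-1)$-dimensional parameter set $P:=Q\cap C\cap\pi_n(A)$; in particular $\pi_n(A_Q)=P$. First I would handle the coordinate $i=n$: the projection $\pi_n$ restricted to $A_Q$ is the inverse of the graph parametrisation $\fx\mapsto(\fx,\fh(\fx))$, and property~\ref{it:N2} of a nice cube says precisely that this parametrisation, as a map $P\to A_Q\subseteq\RR^n$ with the target $n$-th coordinate scaled by $1/K$ and the remaining coordinates left alone — or more symmetrically, the map $(\fx)\mapsto(\sqrt{K^2-1}\,\fx,\;\fh(\fx))$ onto a rescaled copy — distorts squared distances by at most $K^2$; hence $\pi_n\colon A_Q\to P$ is $K$-Lipschitz after the same rescaling, giving $\cH^{n-1}(P)\le K^{n-1}\cH^{n-1}(A_Q)$ — wait, this is the wrong direction. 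The correct move is the reverse: the graph parametrisation $\Phi\colon P\to A_Q$ satisfies, by~\ref{it:N2}, $\n{\Phi(\fa)-\Phi(\fb)}^2=\n{\fa-\fb}^2+|\fh(\fa)-\fh(\fb)|^2\le K^2\n{\fa-\fb}^2+K^2|\Lx(\fa-\fb)|^2\le\ldots$; so I must not compare $A_Q$ with $P$ directly but with a suitably sheared image.

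So the key device is a change of variables that absorbs the linear part $\Lx$. Fix $\fx\in C$ with $Q=\Q$ nice and let $T_\fx\colon\RR^n\to\RR^n$, $T_\fx(y_1,\dots,y_n)=(y_1,\dots,y_{n-1},\,y_n+\Lx(y_1,\dots,y_{n-1}))$; this is a volume-preserving shear, and on the graph it sends $(\fx',\fh(\fx'))$ to $(\fx',\fh(\fx')+\Lx(\fx'))$. The point of~\ref{it:N2} is that after subtracting the linear form the graph becomes \emph{almost flat}: for $\fa,\fb\in P$ one gets, combining $\n{\Phi(\fa)-\Phi(\fb)}^2=\n{\fa-\fb}^2+|\fh(\fa)-\fh(\fb)|^2$ with~\ref{it:N2}, the bound $\n{\Phi(\fa)-\Phi(\fb)}^2\le K^2\bigl(\n{\fa-\fb}^2+|\Lx(\fa-\fb)|^2\bigr)=K^2\n{(T_\fx\circ\Phi)(\fa)-(T_\fx\circ\Phi)(\fb)}^2$; hence $\Phi$ is $K$-Lipschitz as a map from $(T_\fx\circ\Phi)(P)$ (a subset of a hyperplane, whose $\cH^{n-1}$ equals its Lebesgue measure) onto $A_Q$. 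By Lemma~\ref{lem:Lip}, $\cH^{n-1}(A_Q)\le K^{n-1}\cH^{n-1}\bigl((T_\fx\circ\Phi)(P)\bigr)$. It remains to bound that flat piece by the projections. For $i=n$: the vertical projection $(T_\fx\circ\Phi)(P)\to P$ is $1$-Lipschitz (it forgets the last coordinate), so $\cH^{n-1}\bigl((T_\fx\circ\Phi)(P)\bigr)\le\ldots$ — again the wrong direction, so instead I bound it \emph{below} is not what I want either; rather, since $(T_\fx\circ\Phi)(P)$ is itself a graph over $P$ of the order-reversing-plus-linear function $\fh+\Lx\circ(\cdot)$ and $\pi_n$ of it equals $P=\pi_n(A_Q)$, I apply Lemma~\ref{lem:linear} to the affine approximation: more precisely, write $\cH^{n-1}\bigl((T_\fx\circ\Phi)(P)\bigr)$ as a measure on a graph and use that the graph of a linear function over $P$ has $\cH^{n-1}$ equal to $\bigl(1+\sum c_i^2\bigr)^{1/2}\cH^{n-1}(P)$, but here the relevant linear part has been killed by $T_\fx$, so the graph is $P$ itself up to the shear, giving $\cH^{n-1}\bigl((T_\fx\circ\Phi)(P)\bigr)=\cH^{n-1}(P)=\cH^{n-1}(\pi_n(A_Q))$.

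Finally, for $i\in[n-1]$ I would argue symmetrically using~\ref{it:N3}. If $D_i\fh(\fx)=0$ then $\pi_i\colon A_Q\to\pi_i(A_Q)$ is already distance-nonincreasing in a way that yields $\cH^{n-1}(\pi_i(A_Q))\le\cH^{n-1}(A_Q)$, which is too weak on its own but combines fine with the other coordinates; the substantive case is $D_i\fh(\fx)\ne0$, where~\ref{it:N3} says the map forgetting the $i$-th coordinate and keeping the last is, after rescaling, bi-Lipschitz with constant $K$ between $A_Q$ and $\pi_i(A_Q)$, so Lemma~\ref{lem:Lip} gives $\cH^{n-1}(A_Q)\le K^{n-1}\cH^{n-1}(\pi_i(A_Q))$. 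Summing the $n$ estimates $\cH^{n-1}(A_Q)\le K^{n-1}\cH^{n-1}(\pi_i(A_Q))$ obtained for each coordinate (via the shear $T_\fx$, which contributes no extra factor since it is volume-preserving and, on the relevant hyperplanes, an isometry onto its image up to the controlled $K$) and dividing by $n$ yields $\cH^{n-1}(A_Q)\le K^{n-1}\cdot\frac1n\sum_{i=1}^n\cH^{n-1}(\pi_i(A_Q))\le K^{2(n-1)}\sum_{i=1}^n\cH^{n-1}(\pi_i(A_Q))$, where the last inequality is wasteful but harmless. The main obstacle is bookkeeping the directions of the Lipschitz inequalities correctly: properties~\ref{it:N2} and~\ref{it:N3} are engineered so that in \emph{every} coordinate direction the parametrised surface $A_Q$ is $K$-Lipschitz-equivalent to its $i$-th projection, and the exponent $2(n-1)$ in the statement is simply a convenient overestimate of the true exponent $n-1$ that falls out of a single application of Lemma~\ref{lem:Lip} per direction.
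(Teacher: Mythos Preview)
Your proposal contains a genuine gap: the individual estimate
\[
\cH^{n-1}(A_Q)\le K^{n-1}\cH^{n-1}\bigl(\pi_i(A_Q)\bigr)
\]
that you claim to obtain \emph{for each fixed} $i\in[n]$ is simply false. Take $n=2$ and let $A$ be the graph of a steeply decreasing function near $\fx$; then $\cH^{1}(A_Q)$ is large while $\cH^{1}(\pi_2(A_Q))$, the projection onto the horizontal axis, is tiny. No constant close to $1$ can save this. The whole point of the projection inequality is that only the \emph{sum} of the projections dominates, and your ``sum the $n$ estimates and divide by $n$'' step is a non sequitur (if each estimate held separately you would not need to sum). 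Your reading of~\ref{it:N3} is also off: it does \emph{not} make $\pi_i\colon A_Q\to\pi_i(A_Q)$ bi-Lipschitz; property~\ref{it:N3} controls $|\Lx(\fa-\fb)|$, not $|a_i-b_i|$, so it gives no direct bound on the distance in $A_Q$ in terms of the distance in $\pi_i(A_Q)$. Finally, your shear $T_\fx$ is miswired: $(T_\fx\circ\Phi)(\fa)=(\fa,\fh(\fa)+\Lx(\fa))$, whose pairwise distance is \emph{not} $\n{\fa-\fb}^2+|\Lx(\fa-\fb)|^2$.

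The actual argument passes through the linearised graph $S=\{(\fa,\Lx(\fa))\colon\fa\in\pi_n(A_Q)\}$ and uses Lemma~\ref{lem:linear} as the one place where a genuine \emph{sum} of projections appears. Property~\ref{it:N2} says the map $S\to A_Q$, $(\fa,\Lx(\fa))\mapsto(\fa,\fh(\fa))$, is $K$-Lipschitz, so $\cH^{n-1}(A_Q)\le K^{n-1}\cH^{n-1}(S)$. Lemma~\ref{lem:linear} then gives $\cH^{n-1}(S)\le\cH^{n-1}(\pi_n(A_Q))+\sum_{i<n}\cH^{n-1}(\pi_i(S))$. For each $i<n$, property~\ref{it:N3} (when $D_i\fh(\fx)\ne 0$) says the map $\pi_i(A_Q)\to\pi_i(S)$, $(\fa_i,\fh(\fa))\mapsto(\fa_i,\Lx(\fa))$, is $K$-Lipschitz, whence $\cH^{n-1}(\pi_i(S))\le K^{n-1}\cH^{n-1}(\pi_i(A_Q))$; when $D_i\fh(\fx)=0$ the set $\pi_i(S)$ lies in an $(n-2)$-plane and has measure zero. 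Combining these yields the factor $K^{2(n-1)}$. The missing idea in your sketch is precisely this detour through $S$ and the use of Lemma~\ref{lem:linear}; properties~\ref{it:N2} and~\ref{it:N3} are tailored to compare $A_Q$ with $S$ and $\pi_i(A_Q)$ with $\pi_i(S)$, not $A_Q$ with $\pi_i(A_Q)$ directly.
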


\begin{proof}
	Observe that Definition~\ref{dfn:nice}\ref{it:N2} asserts that the 
	map $\bigl(\fa,\Lx(\fa)\bigr)\longmapsto \bigl(\fa,\fh_A(\fa)\bigr)$ from the set 
	$S=\bigl\{(\fa,\Lx(\fa))\colon \fa\in \pi_n(A_Q)\bigr\}$ onto the set $A_Q$ is 
	Lipschitz with constant $K$. Therefore Lemma~\ref{lem:Lip} and 
	Lemma~\ref{lem:linear} (applied with~$\pi_n(A)$ and~$\Lx$ here in place of~$B$ and~$L$ 
	there) yield
	\begin{eqnarray*}
		\cH^{n-1}(A_Q)
		&\le& 
		K^{n-1}\cdot \cH^{n-1}(S) \\
		&\le& 	
		K^{n-1}\Bigl(\cH^{n-1}\bigl(\pi_n(A_Q)\bigr)
			+ \sum_{i=1}^{n-1}\cH^{n-1}\bigl(\pi_i(S)\bigr)\Bigr)\,.
	\end{eqnarray*}
	So to conclude the proof it suffices to show
	\begin{equation}\label{eq:use-c}
		\cH^{n-1}\bigl(\pi_i(S)\bigr) \le K^{n-1} \cH^{n-1}\bigl(\pi_i(A_Q)\bigr) 
		\quad \text{ for all } 
		i \in [n-1]\,.
	\end{equation}

	If $D_i \fh_A(\fx)=0$, the set $\pi_i(S)$ is contained in an $(n-2)$-dimensional 
	vector space and~\eqref{eq:use-c} is clear. On the other hand, if $D_i \fh_A(\fx)\ne 0$, 
	then Definition~\ref{dfn:nice}\ref{it:N3} implies that the map 
	\[
		\bigl(\fa_i,\fh_A(\fa)\bigr) \longmapsto \bigl(\fa_i,\Lx(\fa)\bigr)
	\]
	from $\pi_i(A_Q)$ to $\pi_i(S)$ is Lipschitz with constant $K$, which 
	entails~\eqref{eq:use-c} in view of Lemma~\ref{lem:Lip}.
\end{proof}

Next we show that nice cubes are ubiquitous.  

\begin{lemma}\label{lem:get-nice}
	Given $\fx\in C$, the cube $\Q$ is nice for every sufficiently small $\varepsilon >0$. 
\end{lemma}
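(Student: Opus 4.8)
The plan is to verify the three defining properties of a nice cube one at a time, extracting for each an explicit $\varepsilon$ from the differentiability data supplied by Lemma~\ref{lem:C}, and then taking the minimum. Property~\ref{it:N1} is immediate: since $C\subseteq (0,1)^{n-1}$ and $\fx\in C$, any $\varepsilon<\dist(\fx,\partial[0,1]^{n-1})$ works, so this imposes no real constraint. The substance lies in~\ref{it:N2} and~\ref{it:N3}, which are two "reverse triangle inequality" type estimates relating the increment $\fh_A(\fa)-\fh_A(\fb)$ to the linear increment $\Lx(\fa-\fb)$, with an error controlled by $K^2-1>0$.

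For~\ref{it:N2}, the idea is: by uniform differentiability on $C$ (property~\ref{it:C4}), given any $\eta>0$ I can pick $\varepsilon>0$ so that $|\fh_A(\fy)-\fh_A(\fx)-\Lx(\fy-\fx)|\le \eta\n{\fy-\fx}$ for all $\fy\in C$ with $\n{\fy-\fx}<\varepsilon$. Applying this at both $\fa$ and $\fb$ in $Q\cap C$ (note $\n{\fa-\fx},\n{\fb-\fx}<\sqrt{n-1}\,\varepsilon$ since $Q$ is an $\varepsilon$-cube in the sup-norm centered at $\fx$) and subtracting, I get
\[
	\big|\fh_A(\fa)-\fh_A(\fb)-\Lx(\fa-\fb)\big| \le \eta\bigl(\n{\fa-\fx}+\n{\fb-\fx}\bigr) \le C_n\,\eta\,\varepsilon
\]
for a dimensional constant $C_n$. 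Then $|\fh_A(\fa)-\fh_A(\fb)| \le |\Lx(\fa-\fb)| + C_n\eta\varepsilon$, and since $\Lx$ is a fixed linear form I can bound $|\Lx(\fa-\fb)|$ trivially and also $\n{\fa-\fb}$ is at most $2\sqrt{n-1}\,\varepsilon$; squaring and using $(u+v)^2\le (1+\lambda)u^2 + (1+\lambda^{-1})v^2$ with a suitable $\lambda$, the cross term and the $\eta\varepsilon$ term can be absorbed into $(K^2-1)\n{\fa-\fb}^2$ once $\eta$ (hence $\varepsilon$) is small enough relative to the fixed gap $K^2-1$. The only subtlety is that if $\fa=\fb$ the inequality is trivial, and otherwise one divides through by $\n{\fa-\fb}^2$ and the estimate becomes a statement about a quantity that tends to $0$ as $\varepsilon\to0$, while $K^2-1$ is a fixed positive number — so smallness of $\varepsilon$ suffices.

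Property~\ref{it:N3} is handled symmetrically but in the "other direction": fix $i$ with $D_i\fh_A(\fx)\ne0$. From the same uniform-differentiability estimate, $|\Lx(\fa-\fb)|\le |\fh_A(\fa)-\fh_A(\fb)| + C_n\eta\varepsilon$. Now I need to show the left side is controlled by $(K^2-1)\sum_{j\ne i}|a_j-b_j|^2 + K^2|\fh_A(\fa)-\fh_A(\fb)|^2$; the point of excluding the $j=i$ coordinate is that $\Lx(\fa-\fb) = D_i\fh_A(\fx)(a_i-b_i) + \sum_{j\ne i}D_j\fh_A(\fx)(a_j-b_j)$, and solving for $a_i-b_i$ shows that $|a_i-b_i|$ is itself bounded by a linear combination of $|\Lx(\fa-\fb)|$, the $|a_j-b_j|$ for $j\ne i$, and the error — because $D_i\fh_A(\fx)\ne0$ lets us divide. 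Substituting back and squaring, everything is again controlled by a quantity that is $(K^2-1)$ times the $j\ne i$ terms plus $K^2$ times $|\fh_A(\fa)-\fh_A(\fb)|^2$ plus an error that vanishes with $\varepsilon$, so small $\varepsilon$ closes the gap. The main obstacle — though it is more bookkeeping than genuine difficulty — is tracking the constants carefully enough to confirm that in each of the two inequalities the error terms really can be absorbed into the $(K^2-1)$-slack simultaneously; since there are only finitely many choices of $i$ in~\ref{it:N3}, one takes the minimum of all the resulting $\varepsilon$'s (and the one from~\ref{it:N1}) to obtain a single $\varepsilon$ that makes $\Q$ nice.
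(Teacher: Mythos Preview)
There is a genuine gap in your handling of~\ref{it:N2} (and the same gap recurs in~\ref{it:N3}). When you apply uniform differentiability at the centre $\fx$ to both $\fa$ and $\fb$ and subtract, the error you obtain is
\[
	\big|\fh_A(\fa)-\fh_A(\fb)-\Lx(\fa-\fb)\big|\le \eta\bigl(\n{\fa-\fx}+\n{\fb-\fx}\bigr)\le C_n\,\eta\,\varepsilon\,,
\]
which is of order $\eta\varepsilon$, \emph{not} of order $\eta\n{\fa-\fb}$. The inequality~\ref{it:N2} must hold for \emph{all} $\fa,\fb\in Q\cap C$, in particular for pairs with $\n{\fa-\fb}$ arbitrarily small compared to $\varepsilon$. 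For such pairs the right-hand side of~\ref{it:N2} is $O(\n{\fa-\fb}^2)$, while your upper bound on the left-hand side is of size roughly $(\eta\varepsilon)^2$, which can be vastly larger. Your claim that ``dividing through by $\n{\fa-\fb}^2$ gives a quantity that tends to $0$ as $\varepsilon\to0$'' is therefore false: the quotient $(\eta\varepsilon/\n{\fa-\fb})^2$ is unbounded.

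The paper avoids this by applying property~\ref{it:C4} of Lemma~\ref{lem:C} directly to the pair $(\fa,\fb)$ --- recall that~\ref{it:C4} is stated for \emph{any} two nearby points of $C$, not only for pairs containing $\fx$ --- which yields the error $\eta\n{\fa-\fb}$ but with $L_{\fa}$ in place of $\Lx$. Continuity of the partial derivatives on $C$ (property~\ref{it:C2}) then lets one replace $L_{\fa}$ by $\Lx$ at the cost of a further term $\|L_{\fa}-\Lx\|\,\n{\fa-\fb}$, which is again proportional to $\n{\fa-\fb}$ and small for small $\varepsilon$. With the error in the correct form $2\eta\n{\fa-\fb}$, a Cauchy--Schwarz argument closes the gap exactly. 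The verification of~\ref{it:N3} in the paper is more delicate still, splitting into two cases according to whether $|\Lx(\fa-\fb)|$ is large or small relative to $\n{\fa-\fb}$; your sketch of~\ref{it:N3} inherits the same scaling defect.
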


\begin{proof}
	We verify for each of the three clauses in Defintion~\ref{dfn:nice} separately  
	that it holds for every sufficiently small $\varepsilon >0$. 
	Since $C \subseteq(0,1)^{n-1}$, this is immediate for~\ref{it:N1}.     
	For~\ref{it:N2}, we put
	\[
		\eta = \frac{K^2-1}{2K}
	\]
	and let $\varepsilon>0$ be sufficiently small. For arbitrary $\fa,\fb\in \Q\cap C$ 
	Lemma~\ref{lem:C} yields
	\begin{eqnarray*}
		\big|\fh_A(\fa)-\fh_A(\fb)\big| 
		&\overset{\text{\ref{it:C4}}}{\le}& 
		|L_{\fa}(\fa-\fb)| + \eta \n{\fa-\fb} \\
		&\le& 
		|\Lx(\fa-\fb)| + (\|\Lx-L_{\fa}\| + \eta) \n{\fa-\fb}\\
		&\overset{\text{\ref{it:C2}}}{\le}& 
		|\Lx(\fa-\fb)| + 2\eta \n{\fa-\fb}\,. 
	\end{eqnarray*}
	Now the Cauchy-Schwarz inequality implies 
	\begin{eqnarray*}
		\big|\fh_A(\fa)-\fh_A(\fb)\big|^2 
		&\le& 
		\left(\frac{1}{K}\cdot K\cdot|\Lx(\fa-\fb)| + \frac{2\eta}{\sqrt{K^2-1}}
			\cdot \sqrt{K^2-1}\cdot \n{\fa-\fb}\right)^2  \\
		&\le& 
		\left(\frac{1}{K^2} + \frac{4\eta^2}{K^2-1} \right)\cdot 
			\left( K^2\cdot |\Lx(\fa-\fb)|^2 + (K^2-1)\cdot \n{\fa-\fb}^2 \right)\,, 
	\end{eqnarray*}
	and, as the first factor is equal to $1$ by the definition of $\eta$, 
	this proves part~\ref{it:N2} of Definition~\ref{dfn:nice}.  

	It remains to check~\ref{it:N3}. Without loss of generality, we may assume that $i=n-1$ 
	and $D_{n-1}f(\fx)\neq 0$. Set
	\begin{align*}
		\lambda' &= \biggl(\sum_{i=1}^{n-2} D_i \fh_A(\fx)^2\biggr)^{1/2}\,,\\
		\lambda''&= \big|D_{n-1}\fh_A(\fx)\big|\,,\\
		\xi &= \min\left\{\frac{\lambda''}{2}, 
			\frac{\lambda''\sqrt{K^2-1}}{2(\lambda' + \lambda'')}\right\}\,,\\
		\eta &= \frac{(K-1)\xi}{2K}\,,
	\end{align*}
	let $\varepsilon>0$ be sufficiently small, and fix arbitrary points $\fa,\fb\in \Q\cap C$. 
	Recall that we have to show 
	\[
		\big|\Lx(\fa-\fb)\big|^2 
		\le 
		(K^2-1) \sum_{i=1}^{n-2} (a_i-b_i)^2 + K^2\cdot \big|\fh_A(\fa)-\fh_A(\fb)\big|^2\,.
	\]
	To this end, it suffices to establish the following two implications:
	\begin{enumerate}[label=\nlabel]
		\item\label{it:11} If $|\Lx(\fa-\fb)| \ge \xi \cdot \n{\fa-\fb}$, 
			then $|\Lx(\fa-\fb)| \le  K|\fh(\fa)-\fh(\fb)|$.
		\item\label{it:22} If $|\Lx(\fa-\fb)| \le \xi \cdot \n{\fa-\fb}$,
			then $|\Lx(\fa-\fb)| \le \sqrt{K^2-1}\cdot \n{\fa_{n-1}-\fb_{n-1}}$,
	\end{enumerate}
	where $\fa_{n-1}=(a_1,\dots,a_{n-2})$ and $\fb_{n-1}$ is defined analogously. 
 	For the proof of~\ref{it:11} we observe that, similarly as before, Lemma~\ref{lem:C} 
	yields 
	\begin{eqnarray*}
	|\fh_A(\fa)-\fh_A(\fb)| 
	&\overset{\text{\ref{it:C4}}}{\ge}& 
	|L_{\fa}(\fa-\fb)| - \eta \n{\fa-\fb} \\
	&\ge& 
	|\Lx(\fa-\fb)| - \left( \|\Lx-L_{\fa} \| + \eta \right)\cdot \n{\fa-\fb}\\
	&\overset{\text{\ref{it:C2}}}{\ge}&  
	|\Lx(\fa-\fb)| - 2\eta \n{\fa-\fb}\,.
	\end{eqnarray*}
	Moreover, the definitions of $\xi$ and $\eta$ imply
	\[ 
		2\eta \n{\fa-\fb} 
		= 
		\frac{K-1}{K}\cdot \xi\cdot \n{\fa-\fb} \le \frac{K-1}{K}\cdot |\Lx(\fa-\fb)|\,,
	\]
	so that altogether we arrive at the desired estimate
	\[
		|\fh(\fa)-\fh(\fb)| \ge \frac{1}{K}\cdot |\Lx(\fa-\fb)|\,.
	\]

	Proceeding with~\ref{it:22} we set $c_i=D_i \fh_A(\fx)$ for every $i\in [n-1]$ 
	and $\mathbf{c}=(c_1,\dots,c_{n-1})$.
	Thus $\lambda'= \n{\mathbf{c}_{n-1}}$, $\lambda''=|c_{n-1}|$, and 
	$\Lx(\fa-\fb)=\mathbf{c} \cdot (\fa-\fb)$.
	Owing to the Cauchy-Schwarz inequality, the triangle inequality, and 
	the assumption of~\ref{it:22} we have
	\begin{align*}
		\lambda''\cdot |a_{n-1}-b_{n-1}| - \lambda' \cdot \n{\fa_{n-1}-\fb_{n-1}}
		&=|c_{n-1}(a_{n-1}-b_{n-1})| - \n{\mathbf{c}_{n-1}} \n{\fa_{n-1}-\fb_{n-1}}\\
		&\le |c_{n-1}(a_{n-1}-b_{n-1})| - |\mathbf{c}_{n-1} \cdot (\fa_{n-1}-\fb_{n-1})|\\
		&\le |\mathbf{c} \cdot (\fa-\fb)|\\
		&\le \xi \cdot \n{\fa-\fb}\\
		&\le \xi\cdot |a_{n-1}-b_{n-1}| + \xi\cdot \n{\fa_{n-1}-\fb_{n-1}}\,.
	\end{align*}
	Since $\xi\le \frac{\lambda''}{2}$, this leads to 
	\[
		\lambda''\cdot |a_{n-1}-b_{n-1}| 
		\le 
		(2\lambda' +\lambda'') \cdot \n{\fa_{n-1}-\fb_{n-1}}\,,
	\]
	wherefore 
	\[
		\n{\fa-\fb}
		\le 
		\n{\fa_{n-1}-\fb_{n-1}} + |a_{n-1}-b_{n-1}|
		\le 
		\frac{2(\lambda'+\lambda'')}{\lambda''} \cdot \n{\fa_{n-1}-\fb_{n-1}}\,.
	\]

	Hence we have indeed
	\[ 
		|\Lx(\fa-\fb)| \le \xi\cdot \n{\fa-\fb}
		 \le 
		 \sqrt{K^2-1}\cdot \n{\fa_{n-1}-\fb_{n-1}}\,,
	\]
	which concludes the proof.
\end{proof}

Given a measurable set $S\subseteq \RR^d$, we shall say that a family $\cV$ of open 
$d$-dimensional cubes forms a {\it Vitali covering} of $S$ if for every $\fx\in S$ 
and $\eps>0$ there is a cube $Q\in\cV$ with~$\fx\in Q$ and $\diam(Q)<\eps$.
Recall that by {\it Vitali's Covering Theorem} (see e.g.~\cite{Cohn}*{p.164})
in such a situation there is a countable subset $\cU\subseteq \cV$ such that 
the members of $\cU$ are mutually disjoint and $S\setminus \bigcup\cU$ is null
with respect to the Lebesgue measure.  

\begin{proof}[Proof of Lemma~\ref{lem:gmt}]
	By Lemma~\ref{lem:get-nice} the collection of nice cubes forms a Vitali covering 
	of~$C$. Therefore Vitali's Covering Theorem yields countably many 
	mutually disjoint nice cubes that cover $C$ except for a null set. In view
	of the compactness of $C$ and Lemma~\ref{lem:C}\ref{it:C1} this leads to finitely
	many mutually disjoint nice cubes, say  $Q^{(1)},\ldots,Q^{(N)}$, such that the 
	Borel set
	\[
		B = \bigcup_{k\in [N]} \bigl(C\cap Q^{(k)}\bigr)
	\]
	satisfies $\cH^{n-1}(B)>1-\delta$.
	It remains to show 
	\begin{equation}\label{eq:313}
		\cH^{n-1}(A')\le \sum_{i=1}^{n}\cH^{n-1}\bigl(\pi_i(A')\bigr)+\delta\,,
	\end{equation}
	where $A'=A\cap\pi_{n}^{-1}(B)$.

	Setting 
	\[
		A^{(k)} = A\cap \pi_{n}^{-1} \bigl(C\cap Q^{(k)}\bigr)
	\]
	for every $k\in [N]$, we infer
	\begin{equation}\label{eq:5}
		\cH^{n-1}(A^{(k)}) 
		\le 
		K^{2(n-1)} \cdot\sum_{i=1}^{n} \cH^{n-1}\bigl(\pi_i(A^{(k)})\bigr)
	\end{equation}
	from Lemma~\ref{lem:use-nice}.

	Next we observe that due to 
	\[
		A' = \bigcup_{k\in [N]} A^{(k)}
	\]
	a repeated application of Lemma~\ref{lem:decomp} reveals 
	\[
		\sum_{k=1}^{N} \cH^{n-1}\bigl(\pi_i(A^{(k)})\bigr) = \cH^{n-1}\bigl(\pi_i(A')\bigr)
	\]
	for every $i\in [n]$. Therefore, by summing~\eqref{eq:5} over $k$ 
	we obtain
	\begin{eqnarray*}
		\cH^{n-1}(A') 
		&\le& 
		\sum_{k=1}^{N} \cH^{n-1}\bigl(A^{(k)}\bigr) \\
		&\le& 
		K^{2(n-1)}\cdot\sum_{i=1}^{n} \cH^{n-1}\bigl(\pi_i(A')\bigr) \\ 
		&\le& 
		\sum_{i=1}^{n} \cH^{n-1}\bigl(\pi_i(A')\bigr) + n\bigl(K^{2(n-1)}-1\bigr)\\
	\end{eqnarray*}
	and our choice of $K$ in~\eqref{eq:K} leads to the desired estimate~\eqref{eq:313}.
\end{proof}
 
\section{Proof of the main Theorem}\label{sec:weak}

Let $n\ge 2$ be fixed throughout this section. 
We begin by describing a construction that allows us to ``approximate'' a given $n$-dimensional 
weak antichain with arbitrary ``accuracy'' by an antichain.  

It will be convenient to write $\Sx=\sum_{i=1}^n x_i$ for $\fx \in \RR^n$. 
Moreover, for every $\eps\in \bigl(0, \frac 1{2n}\bigr)$ we let 
$f_{\epsilon}\colon  \RR^n \rightarrow \RR^n$ denote the linear transformation
\[
	(x_1,\dots,x_n) \longmapsto \bigl(x_1-\eps\Sx,\ldots,x_n-\eps\Sx\bigr)\,,
\]
and set 
\begin{equation}\label{eq:Leps}
	L_\eps=\frac{1}{\sqrt{1-2n\eps}}\,.
\end{equation}

One checks easily that
\begin{equation}\label{eq:Sfx}
	S(f_{\epsilon}(\fx))=(1-n\varepsilon)\Sx
\end{equation}
and, consequently, $f_\eps$ is invertible. Let us also note that $f_\eps$ maps 
$[0, 1]^{n}$ into $[-1, 1]^{n}$.

\begin{fact}\label{fact:Leps}
	Let $\eps\in \bigl(0, \frac 1{2n}\bigr)$.
	\begin{enumerate}[label=\rmlabel]
		\item\label{it:eps1} The inverse $f_\eps^{-1}$ is $L_\eps$-Lipschitz.
		\item\label{it:eps2} If $A\subseteq [0, 1]^n$ is a weak antichain, then 
			$f_{\epsilon}(A)$ is an antichain.
	\end{enumerate}
\end{fact}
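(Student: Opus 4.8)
The plan is to verify the two parts of Fact~\ref{fact:Leps} by direct computation, using the explicit form of the linear map $f_\eps$.

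For part~\ref{it:eps1}, I would first compute $f_\eps^{-1}$ explicitly. Writing $\fy = f_\eps(\fx)$, the relation~\eqref{eq:Sfx} gives $\Sy = (1-n\eps)\Sx$, hence $\Sx = \frac{1}{1-n\eps}\Sy$, and then from $y_i = x_i - \eps\Sx$ we recover $x_i = y_i + \eps\Sx = y_i + \frac{\eps}{1-n\eps}\Sy$. Thus $f_\eps^{-1}$ is the linear map $(y_1,\dots,y_n)\mapsto\bigl(y_1+\tfrac{\eps}{1-n\eps}\Sy,\dots,y_n+\tfrac{\eps}{1-n\eps}\Sy\bigr)$, which has the same structural form as $f_\eps$ itself with $\eps$ replaced by $-\tfrac{\eps}{1-n\eps}$. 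To bound its Lipschitz constant it suffices to bound its operator norm, i.e.\ the largest singular value. Since the map is of the form $I + t\,\mathbf{1}\mathbf{1}^{\mathsf T}$ for the all-ones vector $\mathbf 1$ and a scalar $t = \tfrac{\eps}{1-n\eps}$, its eigenvalues are $1$ (with multiplicity $n-1$, on the hyperplane $\Sx=0$) and $1 + nt = 1 + \tfrac{n\eps}{1-n\eps} = \tfrac{1}{1-n\eps}$ (on the span of $\mathbf 1$). The map is symmetric, so its operator norm is the larger eigenvalue $\tfrac{1}{1-n\eps}$; since $\eps < \tfrac{1}{2n}$ we have $1-n\eps > \tfrac12 > 0$, and one checks $\tfrac{1}{1-n\eps} \le \tfrac{1}{\sqrt{1-2n\eps}} = L_\eps$ because squaring reduces this to $1-2n\eps \le (1-n\eps)^2 = 1 - 2n\eps + n^2\eps^2$, which is obvious. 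Hence $\n{f_\eps^{-1}(\fy)-f_\eps^{-1}(\fy')} = \n{f_\eps^{-1}(\fy-\fy')}\le L_\eps\n{\fy-\fy'}$.

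For part~\ref{it:eps2}, suppose $A\subseteq[0,1]^n$ is a weak antichain and assume toward a contradiction that $f_\eps(A)$ is \emph{not} an antichain: then there are distinct $\fx,\fx'\in A$ with $f_\eps(\fx) < f_\eps(\fx')$, i.e.\ $x_i - \eps\Sx \le x_i' - \eps S(\fx')$ for all $i$, with strict inequality in at least one coordinate (equivalently, the vectors $f_\eps(\fx)$ and $f_\eps(\fx')$ are distinct). Summing over $i$ gives $(1-n\eps)\Sx \le (1-n\eps)S(\fx')$, and since $1-n\eps>0$ this yields $\Sx \le S(\fx')$, so $-\eps\Sx \ge -\eps S(\fx')$ (here $\eps>0$ is essential). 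Feeding this back into $x_i - \eps\Sx \le x_i' - \eps S(\fx')$ we get $x_i \le x_i - \eps\Sx + \eps S(\fx') \le x_i'$... let me redo this: from $x_i - \eps\Sx \le x_i' - \eps S(\fx')$ we get $x_i \le x_i' - \eps S(\fx') + \eps \Sx = x_i' - \eps(S(\fx')-\Sx) \le x_i'$ using $S(\fx')\ge\Sx$. So $\fx \le \fx'$ coordinatewise; since the $f_\eps$-images are distinct and $f_\eps$ is injective, $\fx \ne \fx'$, so $\fx < \fx'$. Now I must upgrade $\fx<\fx'$ to the strict relation $\fx \leftY \fx'$ to contradict the weak antichain property: if $x_i = x_i'$ for some $i$, then the original inequality $x_i - \eps\Sx \le x_i' - \eps S(\fx')$ becomes $-\eps\Sx \le -\eps S(\fx')$, i.e.\ $\Sx \ge S(\fx')$; combined with $\Sx \le S(\fx')$ this forces $\Sx = S(\fx')$, whence $S(\fx')-\Sx = 0$ and the inequality $x_j - \eps\Sx \le x_j' - \eps S(\fx')$ collapses to $x_j \le x_j'$ for every $j$, but also the reverse chain shows we'd need all $f_\eps$-coordinates equal — contradicting distinctness of the images. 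Hence no coordinate can be equal, so $\fx \leftY \fx'$, contradicting that $A$ is a weak antichain.

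The only genuinely delicate point is the last paragraph's bookkeeping in part~\ref{it:eps2}: the passage from $f_\eps(\fx) < f_\eps(\fx')$ (strict only in the weak sense of $\le$ plus distinctness) to $\fx \leftY \fx'$ (strict in \emph{every} coordinate). The mechanism is that the map $f_\eps$ only shifts all coordinates by the \emph{same} amount $\eps\Sx$, so it cannot create a strict gap in one coordinate out of equality in another unless the sums already differ strictly, and chasing these case distinctions carefully — rather than any hard estimate — is where one must be attentive. Everything else is linear algebra that only uses $0 < \eps < \tfrac1{2n}$ to keep $1-n\eps$ and $1-2n\eps$ positive.
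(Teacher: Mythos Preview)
Your proposal is correct. Part~\ref{it:eps1} takes a genuinely different route: you compute $f_\eps^{-1}$ explicitly as $I+t\,\mathbf 1\mathbf 1^{\mathsf T}$ and read off its operator norm $1/(1-n\eps)$ from the eigenvalues, then verify $1/(1-n\eps)\le L_\eps$. The paper instead shows directly that $\n{f_\eps(\fa)}^2\ge (1-2n\eps)\n{\fa}^2$ by expanding and applying Cauchy--Schwarz in the form $n\sum a_i^2\ge \Sa^2$. Your approach has the pleasant byproduct of identifying the \emph{exact} Lipschitz constant of $f_\eps^{-1}$ (namely $1/(1-n\eps)$), which is strictly smaller than $L_\eps$; the paper's estimate is coarser but also shorter.

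For part~\ref{it:eps2} you and the paper pursue the same contradiction, but you make it harder for yourself. From $f_\eps(\fx)\le f_\eps(\fx')$ with the two images distinct, at least one coordinate inequality is strict, so summing already gives the \emph{strict} inequality $S(f_\eps(\fx))<S(f_\eps(\fx'))$, hence $\Sx<S(\fx')$; then $x_i\le x_i'-\eps\bigl(S(\fx')-\Sx\bigr)<x_i'$ for every $i$ in one line. You instead record only $\Sx\le S(\fx')$ and then run a separate case analysis to rule out any equality $x_i=x_i'$. That case analysis is valid (equal sums plus coordinatewise $\le$ forces $\fx=\fx'$), but it is avoidable.
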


\begin{proof}
	For part~\ref{it:eps1} it is enough to verify 
	\[
		\n{f_{\epsilon}(\fx)-f_{\epsilon}(\fy)}^2 \ge (1-2n\eps)\n{\fx-\fy}^2
	\]
	for any two points $\fx, \fy\in\RR^n$. In terms of $\fa=\fx-\fy$ this rewrites as
	\[
		\sum_{i=1}^n \bigl(a_i-\eps \Sa\bigr)^2 \ge (1-2n\eps) \sum_{i=1}^n a_i^2\,,
	\]
	i.e.,
	\[
		2n\eps \sum_{i=1}^n a_i^2  \ge (2\eps-n\eps^2) \Sa^2\,.
	\]

	This follows from the fact that the Cauchy-Schwarz inequality yields the 
	even stronger estimate
	\[
		n\cdot \sum_{i=1}^n a_i^2 \ge \Sa^2\,.
	\]

	Now assume that contrary to~\ref{it:eps2} we have a weak antichain $A\subseteq [0, 1]^n$
	and two distinct points $f_{\epsilon}(\fx), f_{\epsilon}(\fy) \in f_{\epsilon}(A)$ with 
	$f_{\epsilon}(\fx) \le f_{\epsilon}(\fy)$. Using 
	$S(f_{\epsilon}(\fx)) < S(f_{\epsilon}(\fy))$
	and~\eqref{eq:Sfx} we obtain $\Sx < \Sy$. So for every $i \in [n]$ the assumption
	\[
		x_i-\eps\Sx \le y_i -\eps\Sy
	\]
	yields 
	\[
		x_i \le y_i+\varepsilon(\Sx-\Sy) < y_i\,.
	\]
	But~$\fx\leftY\fy$ contradicts $A$ being a weak antichain.
\end{proof}

Later it will be useful to know that in the situation of Fact~\ref{fact:Leps}\ref{it:eps2}
the projection inequality (as in Proposition~\ref{prop:anti}) applies 
to $f_\eps(A)\subseteq [-1, 1]^{n}$. This is because the homothety from~$[-1, 1]^{n}$
onto $[0, 1]^{n}$ sends $f_\eps(A)\subseteq [-1, 1]^{n}$ onto an antichain 
in~$[0, 1]^{n}$ and the $\cH^{n-1}$-measure gets rescaled by a factor of $2^{n-1}$
under this map. 
 
%
\begin{proof}[Proof of Theorem~\ref{thm:main}]
	We divide the argument into three steps.

	{\bf Part I.} Suppose first that $A$ is a compact set. 
	Let $\eps\in\bigl(0, \frac 1{2n}\bigr)$ be arbitrary and recall that by 
	Fact~\ref{fact:Leps}\ref{it:eps2} the projection inequality 
	applies to $f_\eps(A)$. In combination with Lemma~\ref{lem:Lip} and 
	Fact~\ref{fact:Leps}\ref{it:eps1} we obtain
	\[
		\cH^{n-1}(A)   
		\le  
		L_\eps^{n-1}\cdot \cH^{n-1}\bigl(f_{\eps}(A)\bigr) 
		\le 
		L_\eps^{n-1}\cdot \sum_{i=1}^n\cH^{n-1}\bigl(\pi_i(f_{\eps}(A))\bigr)\,.
	\]
	Therefore it suffices to prove for every $i\in[n]$ that
	\begin{equation}\label{eq:ww}
		\liminf_{\eps\to 0} \cH^{n-1}\bigl(\pi_i(f_{\eps}(A))\bigr) 
		\le 
		\cH^{n-1}\bigl(\pi_i(A)\bigr)\,.
	\end{equation}

	Fix $i\in [n]$. Since $\|\pi_i(f_{\varepsilon}(\fa))- \pi_i(\fa)\| \le n^2\varepsilon$ 
	holds for all $\fa\in A$ and $\eps>0$, we have 
	\[
			\pi_i\bigl(f_{\epsilon}(A)\bigr)\subseteq \pi_i(A)^{(n^2\varepsilon)}\,,
	\]
	where the notation is as in~\eqref{eq:Sdelta}.
	Thus a complementary variant of Fact~\ref{fact:haus-mono} yields
	\[
		\liminf_{\eps\to 0} \cH^{n-1}\bigl(\pi_i(f_{\eps}(A))\bigr)
		\le
		\liminf_{\eps\to 0} \cH^{n-1}\bigl(\pi_i(A)^{(n^2\eps)}\bigr)
		\le 
		\cH^{n-1}\left(\bigcap\nolimits_{\varepsilon>0}\pi_i(A)^{(n^2\varepsilon)}\right)\,.
	\]

	As the compactness of $A$ implies 
	$\bigcap_{\varepsilon>0}\pi_i(A)^{(n^2\varepsilon)}=\pi_i(A)$,
	we thereby arrive at~\eqref{eq:ww}.
	
	\medskip
	
	{\bf Part II.} Next we treat the more general case that $A$ 
	is $\cH^{n-1}$-measurable. For every compact $K\subseteq A\cap (0, 1)^n$
	the result of the first part entails
	\begin{equation}\label{eq:KA}
		\cH^{n-1}(K)
		\le 
		\sum_{i=1}^n \cH^{n-1}\bigl(\pi_i(K)\bigr)
		\le 
		\sum_{i=1}^n \cH^{n-1}\bigl(\pi_i(A)\bigr)\,.
	\end{equation}

	Since $\cH^{n-1}(A)$ is finite by Corollary~\ref{cor:N},
	the measurability of $A$ implies (see e.g.~\cite{Evans_Gariepy}*{Theorem 1.7 
	and Theorem 1.8(ii)}) that 
	\[
		\cH^{n-1}(A)=\sup\bigl\{\cH^{n-1}(K)\colon 
			\text{$K$ is a compact subset of $A$}\bigr\}\,.
	\]

	So the desired result follows from~\eqref{eq:KA}.
	
	\medskip
	 
    {\bf Part III. } Assume finally that $A$ is an arbitrary weak antichain. 
    We claim that the closure $\bar{A}$ of $A$ is likewise a weak antichain. 
    Otherwise there existed two distinct points $\fa,\fb\in\bar{A}$ such that 
    $\fa \leftY \fb$. Observe that there are sufficiently small neighbourhoods  
    $U(\fa)$ and $U(\fb)$ of $\fa$ and $\fb$ respectively, 
    such that $\mathbf{c} \leftY \mathbf{d}$ holds for all $\mathbf{c}\in U(\fa)$ 
    and all $\mathbf{d}\in U(\fb)$. Since $U(\fa)$ and $U(\fb)$ necessarily contain points 
    of $A$, we get a contradiction to the fact that $A$ is a weak antichain. 
	This proves that $\bar{A}$ is indeed a weak antichain. 
	
	Now by the Borel regularity of the Hausdorff measure, there exists for 
	every $i\in [n]$ a Borel set $B_i\supseteq\pi_i(A)$ such that 
	$\mathcal H^{n-1}(\pi_i(A))=\mathcal H^{n-1}(B_i)$. Applying the result of the 
	previous step to the measurable weak antichain 
	\[
		A^\star = \bar{A}\cap\bigcap_{i=1}^n \pi_i^{-1}(B_i)\,,
	\]
    we obtain
	\begin{eqnarray*}
		\cH^{n-1}(A) 
		&\le& 
		\cH^{n-1}(A^{\star}) 
		\le
		\sum_{i=1}^n \cH^{n-1}\bigl(\pi_i(A^\star)\bigr) \\
		&\le& 
		\sum_{i=1}^n \cH^{n-1}(B_i)  
		= 
		\sum_{i=1}^n \cH^{n-1}\bigl(\pi_i(A)\bigr)\,,
	\end{eqnarray*}
	as required. 
\end{proof}

\section{Concluding remarks}\label{sec:conc}

\subsection{The discrete case}

For $n\ge 2$ the only case where equality holds in Theorem~\ref{thm:discrete}
occurs when $A=\varnothing$. This gives rise to the following question. 

\begin{problem}
	Determine an optimal lower bound $g(n, m)$ on the gap $\sum_{i=1}^n|\pi_i(A)|-|A|$
	as $A$ varies over weak antichains in $\ZZ^n$ of size $m$.
\end{problem}

Let us mention that a slightly improved version of the argument presented in 
Section~\ref{sec:discrete} yields 
\[
	g(n, m)\ge n-1
\]
for all integers $m, n\ge 1$.  This can be easily seen using
$|A_i| < |\pi_i(A)|$ if $A_i=\emptyset$ and $\pi_i(A_{i-1}) \neq \pi_i(A_{i})$ and consequently $|\pi_i(A_i)| < |\pi_i(A)|$ if $A_i\neq \emptyset$ and $i \ge 2$.

\subsection{Supremum vs. Maximum}

In connection with Corollary~\ref{cor:sup} one may wonder for which 
values of $n$ the supremum is attained. For instance for $n=1$ any one-point 
antichain in $[0, 1]$ has $\cH^0$-measure $1$ and a more sophisticated construction 
mentioned below shows that for $n=2$ the supremum is a maximum as well. We believe that
actually such antichains exist in all dimensions.   

\begin{conjecture}
	\label{conj:sup-max}
	For every $n \ge 1$ there is an antichain in $[0,1]^n$ whose $(n-1)$-dimensional 
	Hausdorff measure equals $n$.
\end{conjecture}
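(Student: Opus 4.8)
The plan is to construct, for each $n\ge 1$, an antichain realising the bound $\cH^{n-1}=n$ by deforming the weak antichain $W_n^\star$ into a genuine antichain without losing any measure. The obvious candidates like the sphere $A_p$ or the simplex $A_n^\star$ fall strictly short because their surfaces are ``rounded'' or ``tilted'' relative to the coordinate axes; what one wants instead is an antichain that is built from $n$ pieces, each of which is a \emph{genuine translate of a subset of a facet} of $[0,1]^n$, glued along their boundaries by thin ``staircase'' transition regions of negligible $\cH^{n-1}$-measure. Concretely, I would look for a decreasing (order-reversing) function whose graph over $[0,1]^{n-1}$ is, on most of the cube, axis-parallel, so that the integrand $\sqrt{1+|D_1 f|^2+\dots+|D_{n-1}f|^2}$ from Subsection~\ref{subsec:outline} is as large as the right-hand side $1+|D_1 f|+\dots+|D_{n-1}f|$ forces it to be --- but equality in that pointwise inequality holds exactly when \emph{at most one} partial derivative is nonzero, i.e. when the graph is locally a coordinate hyperplane bent in at most one direction.

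The natural model is a self-similar ``Devil's-staircase''-type surface. First I would handle $n=2$ explicitly (the excerpt already asserts this case is known): take an antichain in $[0,1]^2$ whose associated decreasing function $f\colon[0,1]\to[0,1]$ is constant on the complement of a Cantor-type set of measure zero and drops along that set, so that the graph consists of countably many horizontal segments of total length $1$ together with a ``vertical'' Cantor staircase of total height $1$; since $\cH^1$ is additive on this countable disjoint union plus a null exceptional set, the graph has $\cH^1$-measure exactly $2$, and one checks directly it is an antichain. For general $n$ I would build the surface inductively: partition $[0,1]^{n-1}$ into a central region where $f$ depends on only the last coordinate (contributing a ``vertical wall'' of measure close to $1$, handled by the $n=2$ idea in that coordinate) and $n-1$ peripheral slabs near the faces $\{x_i=0\}$, on each of which $f$ is (a rescaled copy of) an $(n-2)$-dimensional version of the same construction, contributing measure close to $1$ each; the transition zones between slabs are made arbitrarily thin and, being Lipschitz images of lower-dimensional sets, carry $\cH^{n-1}$-measure $\to 0$.

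The key steps, in order, are: (1) reduce to exhibiting an order-reversing $f\colon[0,1]^{n-1}\to[0,1]$ whose graph $A_f=\{(\fx,f(\fx))\}$ has $\cH^{n-1}(A_f)=n$ --- using that $\pi_n(A_f)=[0,1]^{n-1}$ contributes $1$, and $\cH^{n-1}(\pi_i(A_f))\le 1$ for $i<n$, so by Proposition~\ref{prop:anti} one must \emph{also} arrange $\cH^{n-1}(\pi_i(A_f))=1$ for every $i$, which already pins down the combinatorics; (2) set up the self-similar recursion, carefully specifying the gluing so the global function remains order-reversing (monotone in each coordinate) --- this is a bookkeeping argument about how the ``height budget'' $1$ is allocated among the $n$ independent directions; (3) compute $\cH^{n-1}(A_f)$ by decomposing $A_f$ into the $n$ nearly-flat pieces plus a remainder, invoking the graph-area formula for the Lipschitz (locally coordinate-plane) pieces, $\cH^{n-1}$-additivity on the disjoint union, and Lemma~\ref{lem:Lip} to bound the remainder; (4) pass to a limit over the depth of the recursion using Fact~\ref{fact:haus-mono}, and check that the limiting set is still an antichain (the strict-decrease condition $f(\fx)>f(\fy)$ for $\fx<\fy$ survives because each staircase genuinely drops on a dense null set in every direction).

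The main obstacle I anticipate is step (2)--(4) together: making the self-similar construction \emph{simultaneously} order-reversing in all coordinates while keeping each of the $n$ directional contributions equal to exactly $1$ in the limit, and verifying that the limiting function --- which is only order-reversing, not continuous --- still defines an \emph{antichain} rather than merely a weak antichain. The subtlety is that an order-reversing $f$ gives a weak antichain automatically, but for a genuine antichain one needs the strict inequality $f(\fx)>f(\fy)$ whenever $\fx<\fy$, and a self-similar staircase is constant on large boxes, so one must be sure those constancy boxes are never comparable under the coordinate-wise order --- which forces the ``drop'' sets in different coordinates to interleave in a specific way. I expect this comparability-avoidance, rather than the measure computation, to be the technical heart of the argument; once it is set up correctly, everything else is routine geometric measure theory of the kind already developed in Section~\ref{sec:anti}.
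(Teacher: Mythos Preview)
The statement you are attempting is labelled in the paper as a \emph{conjecture}; the paper does not prove it for $n\ge 3$. It verifies only $n=1$ (trivially) and $n=2$, the latter by citing Foran's result that the graph of a \emph{strictly decreasing singular function} $f\colon[0,1]\to[0,1]$ (strictly decreasing with derivative zero almost everywhere) has $\cH^1$-measure exactly $2$; such a graph is an antichain precisely because $f$ is strictly decreasing.

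Your $n=2$ sketch contains a genuine error, and it propagates to every dimension. You describe $f$ as ``constant on the complement of a Cantor-type set of measure zero''; but if $f$ is constant on any open interval $(a,b)$, say $f\equiv c$ there, then for $a<a'<b'<b$ we have $(a',c)<(b',c)$ in the product order, so the graph is only a weak antichain. The standard Cantor staircase is \emph{not} strictly monotone and its graph is \emph{not} an antichain; the claim ``one checks directly it is an antichain'' is false. The paper's construction instead uses a genuinely strictly decreasing singular function (such functions exist, e.g.\ via Salem-type constructions), whose graph has no horizontal segments yet still achieves $\cH^1=2$ because at every scale it resembles a staircase.

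The same obstruction kills your higher-dimensional plan as written. If $f$ depends on only one coordinate in an open region $U\subseteq[0,1]^{n-1}$, then $f$ is constant in the remaining $n-2$ directions there; since any nonempty open $U$ contains comparable pairs $\fx<\fy$, we get $(\fx,f(\fx))<(\fy,f(\fy))$ and the graph is not an antichain. Your closing diagnosis --- that one must arrange the ``constancy boxes'' to be pairwise incomparable --- misidentifies the problem: the failure occurs \emph{within} a single constancy box, not between boxes, and is unavoidable as soon as $f$ is constant on any open set. A viable construction must be strictly order-reversing everywhere, i.e.\ a higher-dimensional analogue of a strictly monotone singular function (strictly decreasing in every coordinate, all partial derivatives zero a.e.), and producing such an $f$ with $\cH^{n-1}(\text{graph})=n$ is exactly the open content of the conjecture rather than a bookkeeping afterthought.
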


The aforementioned planar example exploits the known fact (see~\cite{Foran}*{p. 810}) 
that the $1$-dimensional Hausdorff measure of the graph of a decreasing function 
$f\colon [0,1]\longrightarrow [0,1]$ is at most $2$ and that this bound is attained 
by \emph{singular functions} (i.e., strictly decreasing functions whose derivative equals 
zero almost everywhere). 
It remains to observe that the graph of a singular function is an antichain in $[0,1]^2$.

\subsection{Skewed projections of weak antichains}
\label{subsec:skew}
So far we focused on inequalities for the orthogonal projections of weak antichains,
but we believe that, actually, more general statements hold. In order to be more precise, 
we need some additional notation.   

Given $A \subseteq [0,1]^n$ and $i \in [n]$, we set
\[
	\underline{A}_i 
	= 
	A\cap \bigl\{(x_1,\ldots,x_n)\in [0,1]^n\colon x_i = \min\{x_1,\ldots,x_n\}\bigr\}\,.
\]
Moreover, let for $i \in [n]$ the \emph{skewed projections}
$\Delta_{i}\colon \underline{A}_i \longrightarrow [0,1]^{n-1}$ be defined by 
\[
	(x_1,\dots,x_n) \longmapsto (x_1-x_i,\dots,x_{i-1}-x_i,x_{i+1}-x_i,\dots,x_n-x_i)\,.
\]
Notice that the skewed projections restricted to a weak antichain are injective. 

\begin{conjecture}\label{conj:diag}
	If $A\subseteq [0,1]^n$ is a weak antichain, then 
	\[
		\cH^{n-1}(A)\leq \sum_{i=1}^{n} \cH^{n-1}\bigl(\Delta_{i}(\underline{A}_i)\bigr)\,.
	\]
\end{conjecture}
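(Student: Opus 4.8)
\textbf{The plan} is to follow, step by step, the architecture of the proof of Theorem~\ref{thm:main}, making two structural replacements: the role played by the partition of a discrete weak antichain into $n$ ``monotone'' pieces (the heart of the proof of Theorem~\ref{thm:discrete}) is taken over by the decomposition $A=\bigcup_{i=1}^n\underline A_i$, and the orthogonal projections $\pi_i$ are replaced by the skewed ones. The discrete version is in fact immediate: every point of a weak antichain has a coordinate attaining the minimum, so $A=\bigcup_i\underline A_i$; and if $\Delta_i(\fx)=\Delta_i(\fy)$ with $\fx,\fy$ in a weak antichain, then $\fx-\fy$ is a constant vector, which forces $\fx=\fy$, so each $\Delta_i$ is injective on $\underline A_i$ and $|A|\le\sum_i|\underline A_i|=\sum_i|\Delta_i(\underline A_i)|$ for finite weak antichains in $\ZZ^n$. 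As in the paper, I would upgrade this to the continuous statement by first treating antichains and then reducing weak antichains to antichains.

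\textbf{From weak antichains to antichains.} Here I would reuse the shear $f_\eps$ of Section~\ref{sec:weak}, exploiting that $f_\eps$ preserves every coordinate difference $x_k-x_l$. Consequently $f_\eps$ preserves the set of argmin coordinates ($\underline{(f_\eps A)}_i=f_\eps(\underline A_i)$) and, in fact, $\Delta_i\circ f_\eps=\Delta_i$ identically; together with the equivariance of $\Delta_i$ under the homothety $[-1,1]^n\to[0,1]^n$ and Fact~\ref{fact:Leps} this yields $\cH^{n-1}(A)\le L_\eps^{n-1}\sum_i\cH^{n-1}(\Delta_i(\underline A_i))$ for compact weak antichains $A$, and $\eps\to0$ finishes the compact case at once — with no need for the $\liminf$ argument used there, precisely because $\Delta_i$ is exactly $f_\eps$-invariant. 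Inner regularity of $\cH^{n-1}$ (applicable since $\cH^{n-1}(A)<\infty$ by Corollary~\ref{cor:N}) extends this to $\cH^{n-1}$-measurable $A$; and the Borel-regularity step of Part~III goes through with the sets ``$\pi_i^{-1}(B_i)$'' replaced by the Borel sets $\{\fx:x_i=\min(\fx)\Rightarrow\Delta_i(\fx)\in G_i\}$, where $G_i\supseteq\Delta_i(\underline A_i)$ is a Borel hull of equal $\cH^{n-1}$-measure.

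\textbf{The antichain case} is where Section~\ref{sec:anti} must be replayed. Write an antichain $A$ as a graph over $\pi_n(A)$ with associated order-reversing function $\fh_A$; then $[0,1]^{n-1}$ splits, up to a Lebesgue-null set, into the regions $R_i=\{\fx':x_i=\min(x_1,\dots,x_{n-1},\fh_A(\fx'))\}$ (all measurable; e.g.\ $\{x_i\le\fh_A(\fx')\}$ has order-reversing indicator), and $\underline A_i$ sits over $R_i$. The analogue of Lemma~\ref{lem:linear} is the clean identity that, for a linear order-reversing $L(\fx')=\sum c_lx_l$ and a set $B$ contained in a single region $R_i$, one has $\cH^{n-1}(\{(\fx',L\fx'):\fx'\in B\})=\sqrt{1+\sum c_l^2}\,\cH^{n-1}(B)$ and $\cH^{n-1}(\Delta_i\{(\fx',L\fx'):\fx'\in B\})=(1-\sum c_l)\,\cH^{n-1}(B)$, the second because the affine map $\fx'\mapsto\Delta_i(\fx',L\fx')$ has Jacobian determinant $\pm(1-\sum c_l)$; the desired inequality is then the elementary $\sqrt{1+\sum c_l^2}\le1+\sum|c_l|=1-\sum c_l$ — literally the inequality from the smooth heuristic of Subsection~\ref{subsec:outline}. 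With $L_\fx$ the (again order-reversing) linearisation of $\fh_A$ at a point $\fx$ supplied by Lemma~\ref{lem:C}, small cubes about $\fx$ lie inside a single $R_i^\circ$, and the definition of a ``nice cube'' and the analogues of Lemmas~\ref{lem:use-nice} and~\ref{lem:get-nice} become \emph{simpler} than in the paper, because the skewed projection is never degenerate (the matrix of $\fx'\mapsto\Delta_i(\fx',L_\fx\fx')$ always has determinant of modulus $\ge1$, so there is no analogue of the case split around $D_i\fh_A(\fx)=0$ in Definition~\ref{dfn:nice}\ref{it:N3}; the reverse Lipschitz bound follows directly from uniform invertibility of that fixed matrix). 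A Vitali covering then gives the skewed analogue of Lemma~\ref{lem:gmt}, and the good-set induction from the proof of Proposition~\ref{prop:anti} — with the \emph{unchanged} Lemma~\ref{lem:D} still absorbing the negligible left-over — completes the proof for antichains, whence Conjecture~\ref{conj:diag} follows by the reduction above.

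\textbf{The main obstacle} is the skewed analogue of the decomposition Lemma~\ref{lem:decomp}: that for a Borel partition $[0,1]^{n-1}=B'\dcup B''$ the set function $E\mapsto\cH^{n-1}\bigl(\Delta_j(\underline A_j\cap\pi_n^{-1}(E))\bigr)$ is additive. This is needed both inside the Vitali step and in the induction, since $\cH^{n-1}$ is only subadditive, and it also supports the bookkeeping that each of the finitely many pieces produced by the Vitali covering lies in a single $\underline A_j$ (so that $\underline A_j=\underline{A'}_j\dcup\underline{A''}_j$). Following Subsection~\ref{subsec:decomp}, one reduces $\nu_j$-measurability of Borel sets to that of the half-spaces $\{x_i\le c\}$; the simplification here is that, on the injective image $\Delta_j(\underline A_j)$, the original coordinate $x_i$ is expressed through the order-reversing function $g_j$ governing $\underline A_j$ (as $g_j$ itself if $i=j$, and as $v_l+g_j$ otherwise), with $g_j$ depending on its \emph{full} argument — so, unlike in Fact~\ref{fact:L}, no restriction of $\fh_A$ to a null hyperplane occurs and the required measurability follows directly from Lemma~\ref{lem:CC}. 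Verifying all of this carefully, and threading the region decomposition through every step, is where the actual work lies; I do not foresee a conceptual obstruction.
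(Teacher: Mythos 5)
First, a point of orientation: the statement you are proving is not proved in the paper at all — it is posed as Conjecture~\ref{conj:diag}, and the only case the paper settles is $n=2$, via the observation that $\Delta_i^{-1}$ is $1$-Lipschitz on $\underline{A}_i$; that argument genuinely stops at $n=2$, since for $n\ge 3$ pairs in $\underline{A}_i$ whose difference is a constant vector except in one coordinate show that $\Delta_i^{-1}$ can expand distances by a factor of order $\sqrt{n-1}$. So your proposal must be judged as an attempt at an open problem, not measured against a hidden proof. As a programme it is coherent, and I did not find a conceptual obstruction: the reduction from weak antichains to antichains is correct and indeed cleaner than Section~\ref{sec:weak}, precisely because $\Delta_i\circ f_\eps=\Delta_i$ and argmin sets are preserved, so no liminf or compactness argument is needed; the Jacobian computation is right ($|\det|=1-\sum_l c_l\ge 1$ because an order-reversing linear form has $c_l\le 0$); and the obstacle you single out, the skewed analogue of Lemma~\ref{lem:decomp}, does admit the resolution you sketch: the function $g_j$ with $g_j(\Delta_j(\fx))=x_j$ is order-reversing on $\Delta_j(\underline{A}_j)$ (if $\Delta_j(\fx)\le\Delta_j(\fy)$ and $x_j<y_j$, then $\fx$ is strictly below $\fy$ in every coordinate), it extends order-reversingly to $[0,1]^{n-1}$, and the Carath\'eodory splitting by the Lebesgue measurable sets $\{v\colon v_i+g_j(v)\le c\}$ reproduces $\{x_i\le c\}$ exactly on the injective image, with no null correction.

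That said, what you have is an outline, and the three thinnest places are exactly where the risk lies. (i) ``Determinant of modulus $\ge 1$, hence uniform invertibility'' is a non sequitur: a large determinant does not bound the inverse. The map $T_{\fx}(\fu)=\Delta_j(\fu,\Lx\fu)$ is in fact uniformly invertible, but only because all coefficients $D_l\fh_A(\fx)$ are $\le 0$ (one can check $\|T_{\fx}^{-1}\|_\infty\le 2$); this bound has to be proved and then combined with property~\ref{it:C4} of Lemma~\ref{lem:C} in skewed analogues of Lemmas~\ref{lem:use-nice} and~\ref{lem:get-nice}. (ii) Small cubes lie in a single region only around almost every point of $C$: the regions $R_i$ have no useful interior because $\fh_A$ need not be continuous, so you must show the tie sets $\{\fx'\colon x_i=\fh_A(\fx')\}$ are Lebesgue null (monotonicity in $x_i$ plus Fubini) and use continuity of $\fh_A$ at differentiability points; only then is the piece over a Vitali cube contained in a single $\underline{A}_j$. (iii) The good-set induction closes only if the smallness hypotheses are kept on the orthogonal projections $\pi_i(A)$ — so that the unchanged Lemma~\ref{lem:D} supplies the base case — while the conclusion carries the skewed right-hand side; ``unchanged Lemma~\ref{lem:D}'' leaves this mixed bookkeeping implicit, and a literal replacement of $\pi_i(A)$ by $\Delta_i(\underline{A}_i)$ throughout Section~\ref{sec:anti} would break the base case. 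Until these skewed analogues of Lemmas~\ref{lem:decomp}, \ref{lem:use-nice} and~\ref{lem:get-nice} are written out in full, the conjecture has to be regarded as still open: what you have is a credible plan going well beyond anything in the paper, not yet a proof.
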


Let us note that, if true, this would furnish a different proof of $\cH^{n-1}(A)\le n$.
As a final result, we verify the validity of this conjecture when $n=2$. 

\begin{theorem}
	Conjecture~\ref{conj:diag} is true when $n=2$. 
\end{theorem}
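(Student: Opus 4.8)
The plan is to prove the stronger inequality $\cH^1(A)\le\cH^1(\phi(A))$, where $\phi\colon\RR^2\to\RR$ is the linear map $\phi(x_1,x_2)=x_2-x_1$, and to deduce the theorem from it by a routine comparison of $\cH^1(\phi(A))$ with the right-hand side. For that comparison, observe that every point of $A$ lies in $\underline{A}_1$ or in $\underline{A}_2$ (according as $x_1\le x_2$ or $x_2\le x_1$), that $\phi$ agrees with $\Delta_1$ on $\underline{A}_1$, and that $\phi$ agrees with $-\Delta_2$ on $\underline{A}_2$; hence $\phi(A)\subseteq\Delta_1(\underline{A}_1)\cup\bigl(-\Delta_2(\underline{A}_2)\bigr)$. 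Since $\cH^1$ on $\RR$ is subadditive and invariant under the reflection $t\mapsto -t$ (apply Lemma~\ref{lem:Lip} to this isometry and to its inverse), this gives $\cH^1(\phi(A))\le\cH^1(\Delta_1(\underline{A}_1))+\cH^1(\Delta_2(\underline{A}_2))$. So everything reduces to proving $\cH^1(A)\le\cH^1(\phi(A))$.

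For the main point I would rotate the plane by $45^\circ$. Consider the linear map $T\colon\RR^2\to\RR^2$, $T(x_1,x_2)=(x_1-x_2,\,x_1+x_2)$, which is a similarity of ratio $\sqrt2$; put $\tilde A=T(A)$ and let $U=\{x_1-x_2\colon\fx\in A\}$ be the projection of $\tilde A$ onto its first coordinate. The key observation is that the weak-antichain property of $A$ becomes a Lipschitz condition after this rotation: given $\fx,\fy\in A$ with difference $\fd=\fy-\fx$, the fact that neither $\fx\leftY\fy$ nor $\fy\leftY\fx$ forces $d_1d_2\le0$, whence
\[
	\bigl|(x_1+x_2)-(y_1+y_2)\bigr|^2-\bigl|(x_1-x_2)-(y_1-y_2)\bigr|^2=(d_1+d_2)^2-(d_1-d_2)^2=4d_1d_2\le0\,.
\]
Thus any two points $(u,v),(u',v')\in\tilde A$ satisfy $|v-v'|\le|u-u'|$; in particular $v$ is a single-valued function of $u$, so $\tilde A$ is the graph of a $1$-Lipschitz function $\psi\colon U\to\RR$.

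To finish, note that the parametrisation $U\to\tilde A$, $u\mapsto(u,\psi(u))$, is $\sqrt2$-Lipschitz, so Lemma~\ref{lem:Lip} yields $\cH^1(\tilde A)\le\sqrt2\,\cH^1(U)$; on the other hand $T^{-1}$ is $\tfrac1{\sqrt2}$-Lipschitz, so Lemma~\ref{lem:Lip} also yields $\cH^1(A)=\cH^1\bigl(T^{-1}(\tilde A)\bigr)\le\tfrac1{\sqrt2}\cH^1(\tilde A)$. Combining the two estimates gives $\cH^1(A)\le\cH^1(U)$, and since $U=-\phi(A)$ and $\cH^1$ on $\RR$ is reflection-invariant we get $\cH^1(A)\le\cH^1(\phi(A))$, which together with the first paragraph completes the proof. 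There is no real obstacle here beyond spotting the reformulation: after the $45^\circ$ rotation a planar weak antichain is exactly the graph of a $1$-Lipschitz function over its ``difference'' coordinate, and the two factors of $\sqrt2$ — one from the Lipschitz parametrisation of the graph, one from the similarity ratio of $T$ — cancel precisely. In higher dimensions the weak-antichain condition no longer collapses to a single Lipschitz graph, which is presumably why Conjecture~\ref{conj:diag} is still open for $n\ge3$.
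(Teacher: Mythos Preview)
Your proof is correct and rests on the same observation as the paper's: for any two points $\fx,\fy$ in a planar weak antichain the increments satisfy $d_1d_2\le 0$, which is exactly the statement that the map from the ``difference coordinate'' $x_2-x_1$ back to $A$ is $1$-Lipschitz. The paper applies this directly, splitting $A=\underline{A}_1\cup\underline{A}_2$ first and showing each $\Delta_i^{-1}$ is $1$-Lipschitz (their inequality $\sqrt{\delta^2+\eps^2}\le\delta+\eps$ for $\delta,\eps\ge 0$ is your $|d_1+d_2|\le|d_1-d_2|$ in the original coordinates), thereby obtaining the slightly finer pair of estimates $\cH^1(\underline{A}_i)\le\cH^1(\Delta_i(\underline{A}_i))$. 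You instead keep $A$ whole, make the Lipschitz structure explicit via the $45^\circ$ rotation, derive the single global bound $\cH^1(A)\le\cH^1(\phi(A))$, and only then split the image. The two arguments are the same idea in two coordinate systems; your version is perhaps more conceptual (``a planar weak antichain is a $1$-Lipschitz graph after rotation''), while the paper's yields the two halves of the inequality separately.
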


\begin{proof} 
	Since $\underline{A}_1$ is a weak antichain, it follows that $\Delta_1$ 
	is a bijection from $\underline{A}_i$ onto its image. Given two numbers
	$a,b\in \Delta_1(\underline{A}_1)$ with $a<b$ their inverse images under $\Delta_1$
	are of the form $\Delta_1^{-1}(a) = (x,y)$ and $\Delta_1^{-1}(b) = (x-\delta, y+\eps)$ 
	for some  $x,y\in [0,1]$ and $\delta, \eps\ge 0$. Owing to 
	\[ 
		\|\Delta_1^{-1}(a)- \Delta_1^{-1}(b)\| 
		= 
		\sqrt{\delta^2 + \eps^2} 
		\le
		\delta+\eps 
		= 
		|a-b|
	\]
	the map $\Delta_1^{-1}$ is Lipschitz with constant $1$ and Lemma~\ref{lem:Lip} 
	yields
	\[
		\cH^1(\underline{A}_1) \le \cH^1\bigl(\Delta_1(\underline{A}_1)\bigr) \,.
	\]
	Applying the same reasoning to $\Delta_2\colon\underline{A}_2\longrightarrow[0,1]$ 
	we infer
	\[
		\cH^1(\underline{A}_2)\le\cH^1(\Delta_2\bigl(\underline{A}_2)\bigr) \,.
	\]
	From these two inequalities we conclude 
	\[
		\cH^1(A)\le\cH^1\bigl(\Delta_1(A_1)\bigr)+\cH^1\bigl(\Delta_2(A_2)\bigr) \,, 
	\]
	as required. 
\end{proof}

\begin{bibdiv}
\begin{biblist}

\bib{Anderson}{book}{
   author={Anderson, Ian},
   title={Combinatorics of finite sets},
   note={Corrected reprint of the 1989 edition},
   publisher={Dover Publications, Inc., Mineola, NY},
   date={2002},
   pages={xvi+250},
   isbn={0-486-42257-7},
   review={\MR{1902962}},
}

\bib{Bishop_Peres}{book}{
   author={Bishop, Christopher J.},
   author={Peres, Yuval},
   title={Fractals in probability and analysis},
   series={Cambridge Studies in Advanced Mathematics},
   volume={162},
   publisher={Cambridge University Press, Cambridge},
   date={2017},
   pages={ix+402},
   isbn={978-1-107-13411-9},
   review={\MR{3616046}},
   doi={10.1017/9781316460238},
}

\bib{Bogachev}{book}{
   author={Bogachev, V. I.},
   title={Measure theory. Vol. I, II},
   publisher={Springer-Verlag, Berlin},
   date={2007},
   pages={Vol. I: xviii+500 pp., Vol. II: xiv+575},
   isbn={978-3-540-34513-8},
   isbn={3-540-34513-2},
   review={\MR{2267655}},
   doi={10.1007/978-3-540-34514-5},
}

\bib{Bollobas}{article}{
   author={Bollob\'as, B\'ela},
   title={Measure graphs},
   journal={J. London Math. Soc. (2)},
   volume={21},
   date={1980},
   number={3},
   pages={401--412},
   issn={0024-6107},
   review={\MR{577716}},
   doi={10.1112/jlms/s2-21.3.401},
}

\bib{Bollobas_Varopoulos}{article}{
   author={Bollob\'as, B.},
   author={Varopoulos, N. Th.},
   title={Representation of systems of measurable sets},
   journal={Math. Proc. Cambridge Philos. Soc.},
   volume={78},
   date={1975},
   number={2},
   pages={323--325},
   issn={0305-0041},
   review={\MR{0379781}},
   doi={10.1017/S0305004100051756},
}

\bib{Bruijn_et_al}{article}{
   author={de Bruijn, N. G.},
   author={Tengbergen, C. A. v. E.},
   author={Kruyswijk, D.},
   title={On the set of divisors of a number},
   journal={Nieuw Arch. Wisk.},
   volume={23},
   date={1951},
   number={2},
   pages={191--193},
   issn={},
   review={\MR{MR0043115}},
   doi={},
}

\bib{Chabrillac_Crouzeix}{article}{
   author={Chabrillac, Yves},
   author={Crouzeix, J.-P.},
   title={Continuity and differentiability properties of monotone real
   functions of several real variables},
   note={Nonlinear analysis and optimization (Louvain-la-Neuve, 1983)},
   journal={Math. Programming Stud.},
   number={30},
   date={1987},
   pages={1--16},
   issn={0303-3929},
   review={\MR{874128}},
}
	
\bib{Cohn}{book}{
   author={Cohn, Donald L.},
   title={Measure theory},
   series={Birkh\"auser Advanced Texts: Basler Lehrb\"ucher. [Birkh\"auser
   Advanced Texts: Basel Textbooks]},
   edition={2},
   publisher={Birkh\"auser/Springer, New York},
   date={2013},
   pages={xxi+457},
   isbn={978-1-4614-6955-1},
   isbn={978-1-4614-6956-8},
   review={\MR{3098996}},
   doi={10.1007/978-1-4614-6956-8},
}

\bib{engel}{article}{
   author={Engel, Konrad},
   title={A continuous version of a Sperner-type theorem},
   language={English, with German and Russian summaries},
   journal={Elektron. Informationsverarb. Kybernet.},
   volume={22},
   date={1986},
   number={1},
   pages={45--50},
   issn={0013-5712},
   review={\MR{825867}},
}

\bib{Engeltwo}{book}{
   author={Engel, Konrad},
   title={Sperner theory},
   series={Encyclopedia of Mathematics and its Applications},
   volume={65},
   publisher={Cambridge University Press, Cambridge},
   date={1997},
   pages={x+417},
   isbn={0-521-45206-6},
   review={\MR{1429390}},
   doi={10.1017/CBO9780511574719},
}

\bib{Evans_Gariepy}{book}{
   author={Evans, Lawrence C.},
   author={Gariepy, Ronald F.},
   title={Measure theory and fine properties of functions},
   series={Studies in Advanced Mathematics},
   publisher={CRC Press, Boca Raton, FL},
   date={1992},
   pages={viii+268},
   isbn={0-8493-7157-0},
   review={\MR{1158660}},
}

\bib{Falconer_1990}{book}{
   author={Falconer, Kenneth},
   title={Fractal geometry},
   note={Mathematical foundations and applications},
   publisher={John Wiley \& Sons, Ltd., Chichester},
   date={1990},
   pages={xxii+288},
   isbn={0-471-92287-0},
   review={\MR{1102677}},
}

\bib{Foran}{article}{
   author={Foran, James},
   title={The length of the graph of a one to one function from $[0,1]$ to
   $[0,1]$},
   journal={Real Anal. Exchange},
   volume={25},
   date={1999/00},
   number={2},
   pages={809--816},
   issn={0147-1937},
   review={\MR{1778534}},
}

\bib{FranklWilson}{article}{
   author={Frankl, P.},
   author={Wilson, R. M.},
   title={The Erd\H os-Ko-Rado theorem for vector spaces},
   journal={J. Combin. Theory Ser. A},
   volume={43},
   date={1986},
   number={2},
   pages={228--236},
   issn={0097-3165},
   review={\MR{867648}},
   doi={10.1016/0097-3165(86)90063-4},
}

\bib{katonaone}{article}{
   author={Katona, G. O. H.},
   title={Continuous versions of some extremal hypergraph problems},
   conference={
      title={Combinatorics},
      address={Proc. Fifth Hungarian Colloq., Keszthely},
      date={1976},
   },
   book={
      series={Colloq. Math. Soc. J\'anos Bolyai},
      volume={18},
      publisher={North-Holland, Amsterdam-New York},
   },
   date={1978},
   pages={653--678},
   review={\MR{519301}},
}

\bib{katonatwo}{article}{
   author={Katona, G. O. H.},
   title={Continuous versions of some extremal hypergraph problems. II},
   journal={Acta Math. Acad. Sci. Hungar.},
   volume={35},
   date={1980},
   number={1-2},
   pages={67--77},
   issn={0001-5954},
   review={\MR{588882}},
   doi={10.1007/BF01896826},
}

\bib{klainrota}{article}{
   author={Klain, Daniel A.},
   author={Rota, Gian-Carlo},
   title={A continuous analogue of Sperner's theorem},
   journal={Comm. Pure Appl. Math.},
   volume={50},
   date={1997},
   number={3},
   pages={205--223},
   issn={0010-3640},
   review={\MR{1431808}},
   doi={10.1002/(SICI)1097-0312(199703)50:3<205::AID-CPA1>3.0.CO;2-F},
}

\bib{Mattila}{book}{
   author={Mattila, Pertti},
   title={Geometry of sets and measures in Euclidean spaces},
   series={Cambridge Studies in Advanced Mathematics},
   volume={44},
   note={Fractals and rectifiability},
   publisher={Cambridge University Press, Cambridge},
   date={1995},
   pages={xii+343},
   isbn={0-521-46576-1},
   isbn={0-521-65595-1},
   review={\MR{1333890}},
   doi={10.1017/CBO9780511623813},
}

\bib{schneider}{book}{
   author={Schneider, Rolf},
   title={Convex bodies: the Brunn-Minkowski theory},
   series={Encyclopedia of Mathematics and its Applications},
   volume={151},
   edition={Second expanded edition},
   publisher={Cambridge University Press, Cambridge},
   date={2014},
   pages={xxii+736},
   isbn={978-1-107-60101-7},
   review={\MR{3155183}},
}

\bib{sperner}{article}{
   author={Sperner, Emanuel},
   title={Ein Satz \"uber Untermengen einer endlichen Menge},
   language={German},
   journal={Math. Z.},
   volume={27},
   date={1928},
   number={1},
   pages={544--548},
   issn={0025-5874},
   review={\MR{1544925}},
   doi={10.1007/BF01171114},
}
	
	
\end{biblist}
\end{bibdiv}
\end{document}